\numberwithin{equation}{section}
\theoremstyle{definition}
\newtheorem{defi}{Definition}[section]
\newtheorem{expl}[defi]{Example}
\newtheorem{nota}[defi]{Notation}
\theoremstyle{plain}
\newtheorem{trm}[defi]{Theorem}
\newtheorem{lem}[defi]{Lemma}
\newtheorem{cor}[defi]{Corollary}
\newtheorem{prop}[defi]{Proposition}
\newtheorem*{mtrm}{Main Theorem}
\newcommand{\p}[1]{\mleft(#1\mright)}
\newcommand{\cp}[1]{\mleft\{#1\mright\}}
\newcommand{\comp}[2]{{\normalfont(cf. \cite[#1]{#2})}}
\newcommand{\comptwo}[4]{{\normalfont(cf. \cite[#1]{#2}, \cite[#3]{#4})}}
\newcommand{\inN}{\in\mathbb{N}}
\newcommand{\seq}[2]{\p{#1_{#2}}_{#2\inN}}
\newcommand{\conv}[2]{#1\xrightarrow{\ \, \ }#2}
\begin{document}
\begin{center}
\textbf{\LARGE{On the Gromov-Hausdorff Limits of\\[0.2cm]Compact Surfaces with Boundary}}\\[0.5cm] 
\scriptsize{TOBIAS DOTT}
\end{center}
\begin{abstract}
In this work we investigate Gromov-Hausdorff limits of compact surfaces carrying length metrics. More precisely, we consider the case where all surfaces have the same Euler characteristic. We give a complete description of the limit spaces and study their topological properties. Our investigation builds on the results of a previous work which treats the case of closed surfaces.
\end{abstract}
\let\thefootnote\relax\footnotetext{2020 \emph{Mathematics Subject Classification}. Primary 51F99; Secondary 53C20, 54F15.}
\let\thefootnote\relax\footnotetext{The author was supported by the DFG grant SPP 2026 (LY 95/3-2).}
\section{Introduction}
Let $X$ be a simply connected compact ANR carrying a length metric and $M$ be a closed connected smooth manifold of dimension larger than two. From a result by Ferry and Okun it follows that $X$ can be obtained as the Gromov-Hausdorff limit of length spaces being homeomorphic to $M$ \comp{p. 1866}{FO95}. In dimension two this is not the case. For example a sequence of length spaces being homeomorphic to the 2-sphere can not converge to a space being homeomorphic to the 3-disc \comp{p. 269}{BBI01}.\\
This observation naturally leads to the following question: What do the Gromov-Hausdorff limits of length spaces being homeomorphic to a fixed closed surface look like? An answer was given by the author in \cite[pp. 13, 15]{Dot24}. In the present paper we completely describe the Gromov-Hausdorff limits of length spaces being homeomorphic to compact surfaces of fixed Euler characteristic. Our investigation particularly covers the case of non-vanishing boundary components.\\
It will turn out that the limit spaces satisfy the following topological properties:
\begin{trm}\label{trm_topo}
Let $X$ be a space that can be obtained as the Gromov-Hausdorff limit of length spaces being homeomorphic to a fixed compact surface. Then the following statements apply:
\begin{itemize}
\item[1)] $X$ is at most 2-dimensional. 
\item[2)] $X$ is locally simply connected.
\item[3)] There are finitely many compact surfaces $S_1,\ldots,S_n$ and $k\inN_0$ such that $\pi_1(X)$ is isomorphic to the free product $\pi_1\p{S_1}\ast\ldots\ast\pi_1\p{S_n}\ast\underbrace{\mathbb{Z}\ast\ldots\ast\mathbb{Z}}_{k\text{-times}}$.
\end{itemize}
\end{trm}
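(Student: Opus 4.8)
The plan is to reduce the statement to the closed-surface case established in \cite{Dot24} by a doubling construction. As a preliminary reduction I would exploit the hypothesis of fixed Euler characteristic: writing $\chi = 2 - 2g - b$ in the orientable and $\chi = 2 - k - b$ in the non-orientable case, with genus $g$, crosscap number $k$ and boundary count $b$ all nonnegative, each of these quantities is bounded in terms of $\chi$. Hence only finitely many homeomorphism types of compact surface share a given Euler characteristic, and after passing to a subsequence I may assume that all approximating surfaces $S_i$ are homeomorphic to one fixed surface $S$ with a fixed number $b$ of boundary components. If $b = 0$ the claim is the closed case, so I assume $b \ge 1$.

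Next I would form the metric double $\widehat{S}_i$ by gluing two isometric copies of $\p{S_i, d_i}$ along their boundary and equipping the result with the induced length metric. Each $\widehat{S}_i$ is a closed surface with $\chi\p{\widehat{S}_i} = 2\chi\p{S} - \chi\p{\partial S} = 2\chi$, and the copy-swapping map is an isometric involution $\sigma_i$ whose fixed set is the boundary. Since the $S_i$ are uniformly totally bounded, so are the doubles; by Gromov precompactness together with Fukaya's equivariant Gromov-Hausdorff precompactness I may pass to a subsequence along which $\p{\widehat{S}_i, \sigma_i}$ converges equivariantly to a pair $\p{Y, \sigma}$. The quotients then converge, $\widehat{S}_i/\sigma_i \to Y/\sigma$, and since the quotient metric on $\widehat{S}_i/\sigma_i$ agrees with $d_i$ on $S_i$ (a reflected path never beats the intrinsic one), this identifies $X \cong Y/\sigma$. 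As $Y$ is a Gromov-Hausdorff limit of closed surfaces of the fixed Euler characteristic $2\chi$, the closed-surface theorem of \cite{Dot24} applies: $Y$ is at most $2$-dimensional, locally simply connected, and $\pi_1\p{Y}$ is a free product of finitely many surface groups and copies of $\mathbb{Z}$.

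It remains to transfer each property from $Y$ to $X \cong Y/\sigma$. For dimension this is immediate, since the quotient map is a closed surjection with fibres of at most two points, whence $\dim X \le \dim Y \le 2$. For the fundamental group I would observe that a closed fundamental domain $H \subseteq Y$ for $\sigma$ maps homeomorphically onto $X$, and the composition $X \cong H \hookrightarrow Y \xrightarrow{\ \sigma\text{-quotient}\ } X$ is the identity; thus $X$ is a retract of $Y$. Consequently $\pi_1\p{X}$ is a retract, hence a subgroup, of $\pi_1\p{Y}$, and it is finitely generated because it is generated by the images of generators of $\pi_1\p{Y}$. By the Kurosh subgroup theorem a finitely generated subgroup of $\pi_1\p{Y}$ is a free product of a free group with subgroups of the surface-group factors; as every subgroup of a surface group is again free or a surface group, $\pi_1\p{X}$ takes exactly the asserted form.

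The hard part will be the local simple connectivity, statement 2). Away from the fixed set of $\sigma$ the space $X$ is locally homeomorphic to $Y$ and the property is inherited for free, but along the image of the possibly collapsing boundary the quotient can in principle create new local loops, and one cannot simply intersect two invariant simply connected neighbourhoods and remain simply connected. The approach I would take is to establish \emph{uniform} local simple connectivity of the $S_i$ near their boundary --- small loops bounding uniformly small discs --- arguing that the fixed Euler characteristic prevents arbitrarily many topological features from accumulating at a single boundary point. This uniformity should pass to the limit and, combined with a collar description of $\sigma$ near its fixed set, yield local simple connectivity of $X$ directly rather than through the quotient. Making this uniform estimate precise, and controlling the geometry of the boundary under collapse, is where I expect the real work to lie.
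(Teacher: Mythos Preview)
Your doubling construction is exactly the paper's approach; the paper packages the involution and fold map as $\tau^\pm\colon 2X\to X^\pm$ in Notation~\ref{nota_doub} rather than invoking equivariant Gromov--Hausdorff convergence, but the content is the same, and statement~1) goes through just as you say.

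The main gap is in statement~2). You already have the tool that solves it---the retraction $Y\to X$ you set up for the fundamental group---but you do not apply it here, and instead flag local simple connectivity as ``the hard part'' and propose a uniform estimate on small loops near the boundary. The paper's argument is much shorter: since $\tau^+$ is an open continuous map and every loop in $\tau^+(V)$ factors through $\tau^+$ applied to a loop in $V$, the image $\tau^+(V)$ of any simply connected open $V\subset 2X$ is again open and simply connected; local simple connectivity of $2X$ (known from the closed case) therefore transfers directly to $X^+\cong X$. Your worry that ``one cannot simply intersect two invariant simply connected neighbourhoods'' is correct but beside the point: the paper does not intersect, it pushes forward under the fold. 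Your proposed uniform-contractibility route is both harder and unnecessary, and would in any case have to contend with thin necks collapsing, which the retraction argument sidesteps entirely.

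For statement~3) your route genuinely differs from the paper's. The paper derives the fundamental group formula only after establishing the full structure theorem (Theorem~\ref{trm_main_first}): $X$ arises from a generalized cactoid by finitely many metric $2$-point identifications, and then Propositions~\ref{prop_funda_form} and~\ref{prop_funda_form_II} give the free-product decomposition directly. Your argument via the retract, Kurosh's subgroup theorem, Grushko, and the classification of subgroups of surface groups is correct and more self-contained---it needs only the closed-surface case of the $\pi_1$ statement together with elementary group theory, and avoids the structural analysis of Sections~\ref{sec_lim} and~\ref{sec_appr}. The paper's route, on the other hand, yields finer information (the surfaces $S_i$ come from explicit maximal cyclic pieces of the cactoid) that your argument does not see.
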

\noindent
Before we state our main result, we introduce some definitions: A subset $A$ of a Peano space $X$ is called $\emph{cyclicly connected}$ if every pair of points in $A$ can be connected by a simple closed curve in $A$. The subset is denoted as $\emph{maximal cyclic}$ provided it is not degenerate to a point, cyclicly connected and no proper subset of a cyclicly connected subset of $X$.\\
We have the following local description of the limit spaces:
\begin{trm}\label{trm_loc}
Let $X$ be a space that can be obtained as the Gromov-Hausdorff limit of length spaces being homeomorphic to a fixed compact surface. Then every point of $X$ admits an open neighborhood being homeomorphic to an open subset of some Peano space whose maximal cyclic subsets are homeomorphic to the 2-sphere or the 2-disc.   
\end{trm}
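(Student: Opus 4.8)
\section*{Proof proposal}

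The plan is to reduce the assertion to the closed-surface case of \cite{Dot24} by a metric doubling construction. Write $X$ as the Gromov-Hausdorff limit of a sequence $\seq{S}{i}$ of length spaces homeomorphic to a fixed compact surface $S$ with non-empty boundary, and for each $i$ form the metric double $DS_i=S_i\cup_{\partial S_i}S_i'$ of two copies of $S_i$ glued along their boundaries, carrying the length metric in which paths may cross the seam. Each $DS_i$ is homeomorphic to the closed surface obtained by doubling $S$, which has Euler characteristic $2\chi(S)$, independent of $i$. Folding the part of any connecting path that enters the second copy back into the first via the canonical reflection shows that $S_i\hookrightarrow DS_i$ is isometric. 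Since a convergent sequence is uniformly totally bounded, and an $\varepsilon$-net of $S_i$ yields an $\varepsilon$-net of each of the two copies in $DS_i$, the family $\seq{DS}{i}$ is uniformly totally bounded as well; passing to a subsequence, the doubles converge to a limit space $DX$.

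Next I would extract the limiting symmetry. Realizing the convergences in a common ambient metric space, the isometric inclusions $S_i\hookrightarrow DS_i$ pass to an isometric embedding $X\hookrightarrow DX$, and the boundary-swapping isometric involutions $\tau_i$, being equicontinuous, subconverge to an isometric involution $\tau$ of $DX$. At every finite stage $S_i\cap\tau_i\p{S_i}=\partial S_i=\operatorname{Fix}\p{\tau_i}$, and I would verify that this relation survives in the limit, giving $DX=X\cup\tau\p{X}$ with $X\cap\tau\p{X}=\operatorname{Fix}\p{\tau}=:\Gamma$. Thus $X$ is precisely one half of the doubled limit, $\Gamma$ is the limit of the boundary curves, and $X$ is homeomorphic to the quotient $DX/\tau$. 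As $DX$ is a Gromov-Hausdorff limit of closed surfaces of fixed Euler characteristic, \cite{Dot24} applies: every point of $DX$ has an open neighborhood homeomorphic to an open subset of a Peano space whose maximal cyclic subsets are $2$-spheres.

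It remains to transport this model from $DX$ to $X$, distinguishing two kinds of points. For $x\in X\setminus\Gamma$ one has $x\notin\tau\p{X}$, so, both halves being closed, $x$ has a neighborhood disjoint from $\tau\p{X}$ and hence open in $DX$; it inherits the local model of $DX$ and its maximal cyclic subsets are $2$-spheres. For $x\in\Gamma$ I would identify a neighborhood of $x$ in $X$ with a neighborhood of its image in $DX/\tau$ and pick a $\tau$-invariant Peano neighborhood $V$ of $x$ realizing the $2$-sphere model, then study how $\tau$ permutes the sphere cyclic elements of $V$. A sphere $E$ through $x$ is either swapped with a distinct sphere $\tau\p{E}$ meeting it only in the cut point $x$, in which case the pair descends to a single $2$-sphere, or it is $\tau$-invariant; in the latter case the relation $E=\p{E\cap X}\cup\tau\p{E\cap X}$ with overlap $\Gamma\cap E$ forces $\tau|_E$ to be of reflection type, so $\Gamma\cap E$ is a simple closed curve and $E/\tau$ is a $2$-disc. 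Since the quotient of a Peano space by a finite isometry group is again a Peano space, reassembling these quotient pieces along the cut-point structure of $V$ presents the neighborhood of $x$ as an open subset of a Peano space whose maximal cyclic subsets are $2$-spheres or $2$-discs, as required.

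The technical heart of the argument is the boundary analysis just sketched. Two points demand real care. First, one must show that the doubles converge compatibly with the given convergence $\seq{S}{i}\to X$, so that $X$ genuinely embeds as the fixed half and, crucially, that $\Gamma$ coincides with $\operatorname{Fix}\p{\tau}$ rather than degenerating to something larger as the boundaries collapse. Second, one must control the interaction of $\tau$ with Whyburn's cyclic element decomposition near a fixed point: that invariant spheres are cut by $\Gamma$ along a single simple closed curve, that the quotient of the cut-point structure remains a Peano space, and that no cyclic elements other than $2$-spheres and $2$-discs can be produced. This is where the weight of the proof lies.
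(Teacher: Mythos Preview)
The paper takes a different route: it first proves the full structure theorem (Theorem~\ref{trm_main_first}), showing that $X$ arises from a geodesic generalized cactoid $Y$ by finitely many 2-point identifications, and then reads the local model off $Y$ directly, replacing each of the finitely many non-sphere, non-disc maximal cyclic subsets by a disc chart and noting that the identifications only wedge such local models together. Doubling does appear in the paper, but only in the proof of the structure theorem for \emph{regular} sequences; non-regular sequences are handled by cutting along short essential arcs or Jordan curves and inducting on the connectivity number.

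Your proposal tries to run the $\tau$-quotient analysis for arbitrary sequences, and this is where a genuine gap lies. The closed-surface local model gives a homeomorphism from an open $V\subset DX$ onto an open set in an \emph{abstract} Peano space $P$; the 2-spheres are maximal cyclic subsets of $P$, not of $DX$ or of $V$, and the transported involution on the image of $V$ has no reason to extend to $P$ or to permute its cyclic elements, so the phrase ``study how $\tau$ permutes the sphere cyclic elements of $V$'' has no clear meaning. If instead you work with the genuine maximal cyclic subsets of $DX$, then $\tau$ does permute them, but by Theorem~\ref{trm_lim_closed} the 2-point identifications can create maximal cyclic subsets of $DX$ that are not surfaces at all. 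Even when a $\tau$-invariant sphere $E\subset DX$ is at hand, your claim that $\Gamma\cap E$ is a single simple closed curve---which is what would force $\tau|_E$ to be a reflection rather than, say, a half-turn with two isolated fixed points---is exactly the content of Lemma~\ref{lem_b_hom_S}, and the proof of that lemma uses regularity in an essential way.
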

\noindent
Now we introduce further definitions for the global description: Let $X$ be a Peano space whose maximal cyclic subsets are compact surfaces and $C\subset X$ be a subcontinuum. Then $C$ is denoted as \emph{admissible} in $X$ provided $T\cap C$ is a point or a boundary component of $T$ for every maximal cyclic subset $T\subset X$. 
\begin{defi}\label{def_gen}
Let $X$ be a Peano space. Then $X$ is called a \emph{generalized cactoid} if the following statements apply:
\begin{itemize}
\item[1)] All maximal cyclic subsets are compact surfaces and only finitely many of them are not homeomorphic to the 2-sphere or the 2-disc.
\item[2)] There are finitely many disjoint admissible subcontinua $C_1,\ldots,C_n\subset X$ such that the boundary components of the maximal cyclic subsets of $X$ are covered by the subcontinua.
\end{itemize}
\end{defi}
\noindent  
It exists a natural choice $C_1,\ldots,C_n$ of the admissible subcontinua as above which is uniquely defined by the following property: The number $n$ is minimal and the union $\cup^n_{i=1}C_i$ is maximal among all choices with $n$ admissible subcontinua \textnormal{(}see Lemma \ref{lem_boundary}\textnormal{)}. We define the \emph{boundary} of $X$ as  $\cup^n_{i=1}C_i$ and denote it by  $\partial X$. Further we say that $C_i$ is a \emph{boundary component} of $X$.\\
Especially we will see that the boundary components are Peano spaces whose maximal cyclic subsets are homeomorphic to the 1-sphere (see Lemma \ref{lem_bound_cact}).\\A space being isometric to a metric quotient of $X$ whose underlying equivalence relation identifies exactly two points  is denoted as a \emph{metric 2-point identification} of $X$. If we consider a space that can be obtained by a successive application of $k>0$ metric 2-point identifications to some generalized cactoid $X$ and $p_1,\ldots,p_k$ denotes a choice of the corresponding projection maps, then $p_i$ is called a \emph{boundary identification} provided it identifies two points of $\p{p_{i-1}\circ\ldots\circ p_0}\p{\partial X}$ where $p_0\coloneqq id_X$.\\ 
The $\emph{connectivity number}$ of a compact surface $S$ is defined as $2-\chi(S)$. If we subtract the number of boundary components, we get the definition of the \emph{reduced connectivity number} of $S$. For a generalized cactoid we define the \emph{connectivity number} as the sum of the reduced connectivity numbers of its maximal cyclic subsets and the number of its boundary components. Moreover we say that a surface carrying a length metric is a \emph{length surface}.\\
The main result of this work completely describes the Gromov-Hausdorff closure of the class of compact length surfaces whose connectivity number is fixed: 
\begin{mtrm}\hypertarget{Main Theorem}
Let $c\inN_0$ and $X$ be a compact length space. Then the following statements are equivalent:
\begin{itemize}
\item[1)] $X$ can be obtained as the Gromov-Hausdorff limit of compact length surfaces whose connectivity number is equal to $c$.
\item[2)] There are $k,k_0\inN_0$ and a geodesic generalized cactoid $Y$ such that the following statements apply:
\begin{itemize}
\item[a)] $X$ can be obtained by a successive application of $k$ metric 2-point identifications to $Y$ such that $k_0$ of them are boundary identifications.
\item[b)] The connectivity number of $Y$ is less or equal to $c+k_0-2k$. 
\end{itemize}
\end{itemize}   
\end{mtrm}
\noindent
Furthermore we will investigate how the result changes if we restrict the first statement to orientable or non-orientable surfaces (see Theorem \ref{trm_main_first} and Theorem \ref{trm_main_sec}).\\ 
Since every compact length surface $S$ can be obtained as the limit of smooth Riemannian 2-manifolds being homeomorphic to $S$ and also of polyhedral surfaces being homeomorphic to $S$ \comptwo{p. 1674}{NR23}{p. 77}{RG93}, we deduce:
\begin{cor}
Let $c\inN_0$ and $X$ be a compact length space. Then the following statements are equivalent:
\begin{itemize}
\item[1)] $X$ can be obtained as the Gromov-Hausdorff limit of compact connected smooth Riemannian 2-manifolds whose connectivity number is equal to $c$.
\item[2)] $X$ can be obtained as the Gromov-Hausdorff limit of compact polyhedral surfaces whose connectivity number is equal to $c$.
\item[3)] There are $k,k_0\inN_0$ and a geodesic generalized cactoid $Y$ such that the following statements apply:
\begin{itemize}
\item[a)] $X$ can be obtained by a successive application of $k$ metric 2-point identifications to $Y$ such that $k_0$ of them are boundary identifications.
\item[b)] The connectivity number of $Y$ is less or equal to $c+k_0-2k$. 
\end{itemize}
\end{itemize}   
\end{cor}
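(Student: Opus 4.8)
The plan is to derive the Corollary directly from the Main Theorem by establishing a chain of Gromov-Hausdorff approximations. The logical structure is $(1)\Rightarrow(3)$, $(2)\Rightarrow(3)$, and $(3)\Rightarrow(1)$, $(3)\Rightarrow(2)$, but the cleanest route exploits the fact that the three conditions describe the \emph{same} closure. Since statement (3) is literally statement 2) of the Main Theorem, the equivalence $(3)\Leftrightarrow\p{\text{Main Theorem 1)}}$ is already available. So it suffices to show that the Gromov-Hausdorff closures of the three classes of approximating surfaces --- arbitrary compact length surfaces, smooth Riemannian 2-manifolds, and polyhedral surfaces, each of connectivity number $c$ --- all coincide.

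First I would record the key density input cited in the excerpt: by \cite[p.~1674]{NR23} and \cite[p.~77]{RG93}, every compact length surface $S$ is itself a Gromov-Hausdorff limit both of smooth Riemannian 2-manifolds homeomorphic to $S$ and of polyhedral surfaces homeomorphic to $S$. Crucially, these approximating manifolds are homeomorphic to $S$, hence share the same Euler characteristic and the same number of boundary components, and therefore have the \emph{same connectivity number} $c=2-\chi(S)$. This is the point that must be checked carefully, since the connectivity number is the invariant the Main Theorem fixes; homeomorphic surfaces have equal $\chi$ and equal boundary count, so the reduced-connectivity bookkeeping is preserved.

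Next I would invoke the standard diagonal argument for Gromov-Hausdorff limits. Denote by $\mathcal{L}_c$, $\mathcal{R}_c$, $\mathcal{P}_c$ the classes of compact length surfaces, smooth Riemannian 2-manifolds, and polyhedral surfaces of connectivity number $c$, and by $\overline{\mathcal{L}_c}$, $\overline{\mathcal{R}_c}$, $\overline{\mathcal{P}_c}$ their Gromov-Hausdorff closures. The inclusions $\mathcal{R}_c\subset\mathcal{L}_c$ and $\mathcal{P}_c\subset\mathcal{L}_c$ are immediate, giving $\overline{\mathcal{R}_c}\subset\overline{\mathcal{L}_c}$ and $\overline{\mathcal{P}_c}\subset\overline{\mathcal{L}_c}$. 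For the reverse inclusions I would argue that $\mathcal{L}_c\subset\overline{\mathcal{R}_c}$ by the density input above, whence $\overline{\mathcal{L}_c}\subset\overline{\overline{\mathcal{R}_c}}=\overline{\mathcal{R}_c}$ using that the Gromov-Hausdorff closure is idempotent (a closed set in the Gromov-Hausdorff topology on isometry classes of compact metric spaces); the same works verbatim for $\mathcal{P}_c$. This yields $\overline{\mathcal{L}_c}=\overline{\mathcal{R}_c}=\overline{\mathcal{P}_c}$, so statements (1), (2), and the Main-Theorem condition all describe the same spaces $X$, which are in turn characterized by (3).

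The main obstacle I expect is purely bookkeeping rather than geometric: one must verify that the density results really produce approximants with \emph{exactly} the prescribed connectivity number $c$, not merely surfaces close in the Gromov-Hausdorff sense with possibly different topology. Because the cited approximations are by surfaces homeomorphic to the given $S$, the connectivity number is genuinely preserved, and no surfaces of the wrong connectivity number leak into the closure. A secondary technical point is the idempotence of the closure operation on the (separated) Gromov-Hausdorff space of compact metric spaces, which is standard but should be stated explicitly so that the two-step approximation (approximate a limit by length surfaces, then each length surface by smooth or polyhedral ones) collapses into a single diagonal sequence.
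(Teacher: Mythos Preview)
Your proposal is correct and follows precisely the route the paper takes: the paper simply notes (in one sentence) that every compact length surface is a Gromov-Hausdorff limit of smooth Riemannian 2-manifolds, respectively polyhedral surfaces, homeomorphic to it \comptwo{p. 1674}{NR23}{p. 77}{RG93}, and then deduces the Corollary from the Main Theorem. Your write-up just makes explicit the closure/diagonal argument that the paper leaves implicit.
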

\noindent
Beyond the main result of \cite{Dot24} we are only aware of the following predecessors: In the 1930's Whyburn described the limits of length spaces being homeomorphic to the 2-disc \textnormal{(}see Theorem \ref{trm_lim_disc}\textnormal{)}. Further Gromov states the first statement of Theorem \ref{trm_topo} for orientable surfaces without proof and attributes it to Ivanov \comp{p. 103}{Gro07}. From a  result by Carssola follows that every compact length space can be obtained as the limit of closed length surfaces \comp{p. 505}{Cas92}.\\
The latter result especially implies that the bound on the connectivity number is essential to our investigation.\\
This paper is organized as follows: In the preliminary notes we provide results on Gromov-Hausdorff convergence and the limits of closed length surfaces. Further we deal with the topology of Peano spaces, generalized cactoids and compact surfaces. In particular we show that the boundary of a generalized cactoid is well-defined and describe the topology of the boundary.\\
The aim of the third section is to show that the first statement of the \hyperlink{Main Theorem}{Main Theorem} implies the second. For this we start with a consideration of sequences with additional topological control. We also prove the first two statements of Theorem \ref{trm_topo} and give a proof of Theorem \ref{trm_loc}.\\ In Section \ref{sec_appr} we treat the remaining direction of the \hyperlink{Main Theorem}{Main Theorem}. At the end of the section we show the third statement of Theorem \ref{trm_topo}.\\
We note that the final results of Section \ref{sec_lim} and \ref{sec_appr} refine their corresponding statement of the \hyperlink{Main Theorem}{Main Theorem}. Together they describe how the \hyperlink{Main Theorem}{Main Theorem} changes if we restrict the first statement to orientable or non-orientable surfaces.   
\section{Preliminaries}
\subsection{Gromov-Hausdorff Convergence}
This subsection provides results on Gromov-Hausdorff convergence. Basic definitions and results regarding the Gromov-Hausdorff distance can be found in \cite[pp. 251-270]{BBI01}. A corresponding notion of convergence for maps is treated in \cite[pp. 401-402]{Pet16}.\\
For the sake of simplicity we note the following: If we consider a Gromov-Hausdorff convergent sequence, then there are isometric embeddings of the spaces and their limit into some compact metric space such that the induced sequence Hausdorff converges to the image of the limit \comp{pp. 64-65}{Gro81}. Whenever we apply this statement, we will identify corresponding sets without mentioning the underlying space.\\
Next we introduce the concept of almost isometries: If $f\colon X\to Y$ is a map between metric spaces, then we define its \emph{distortion} by:
\begin{align*}
dis(f)\coloneqq\sup_{x_1,x_2\in X}\cp{\left|d_Y\p{f\p{x_1},f\p{x_2}}-d_X\p{x_1,x_2}\right|}.\end{align*}
Further we call $f$ an $\varepsilon$-isometry provided $dis(f)\le\varepsilon$ and $f(X)$ is an $\varepsilon$-net in $Y$.\\
We have the following convergence criterion:
\begin{prop}\comp{p. 260}{BBI01}\label{prop_almost_isom}
Let $X$ be a compact metric space and $\seq{X}{n}$ be a sequence of compact metric spaces. Then the following statements are equivalent:
\begin{itemize}
\item[1)] The sequence converges to $X$.
\item[2)] For every $n\inN$ there is an $\varepsilon_n$-isometry $f_n\colon X_n\to X$ and $\conv{\varepsilon_n}{0}$. 
\end{itemize}
\end{prop}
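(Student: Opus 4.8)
The plan is to reduce the equivalence to two quantitative comparisons between the Gromov--Hausdorff distance and the existence of almost isometries, each obtained from the characterization of $d_{GH}$ by correspondences. Recall that a \emph{correspondence} between metric spaces $X$ and $Y$ is a relation $R\subseteq X\times Y$ whose projections onto both factors are surjective, and that $2\,d_{GH}(X,Y)$ equals the infimum of the distortion $dis(R)\coloneqq\sup\cp{\left|d_X(x,x')-d_Y(y,y')\right| : (x,y),(x',y')\in R}$ taken over all such $R$. Both implications then follow by translating between a correspondence and a single map.

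For the implication $2)\Rightarrow 1)$ I would turn each $\varepsilon_n$-isometry $f_n\colon X_n\to X$ into the correspondence $R_n\coloneqq\cp{(x,y)\in X_n\times X : d_X(f_n(x),y)\le\varepsilon_n}$. Since $f_n(X_n)$ is an $\varepsilon_n$-net, the projection of $R_n$ onto $X$ is surjective, so $R_n$ is a genuine correspondence. A routine application of the triangle inequality, using $dis(f_n)\le\varepsilon_n$, yields $dis(R_n)\le 3\varepsilon_n$, hence $d_{GH}(X_n,X)\le\tfrac{3}{2}\varepsilon_n$, and $\conv{\varepsilon_n}{0}$ gives the asserted convergence.

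For the converse $1)\Rightarrow 2)$ I would start from correspondences $R_n$ between $X_n$ and $X$ with $dis(R_n)\le 2\,d_{GH}(X_n,X)+\tfrac{1}{n}=:\varepsilon_n$, which tend to $0$ by assumption. For each $x\in X_n$ I would select a point $f_n(x)\in X$ with $(x,f_n(x))\in R_n$. The distortion bound on $R_n$ directly gives $dis(f_n)\le\varepsilon_n$, while surjectivity of the projection $R_n\to X$, applied once more with the distortion estimate, forces $f_n(X_n)$ to be an $\varepsilon_n$-net; thus each $f_n$ is an $\varepsilon_n$-isometry and $\conv{\varepsilon_n}{0}$.

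I do not expect a serious obstacle here: once the correspondence description of $d_{GH}$ is available the argument is pure bookkeeping with the triangle inequality, and that description may simply be quoted from the cited reference. The only point requiring mild care is the selection of $f_n$ from $R_n$ in the converse, which chooses a partner for each source point; by compactness this can be made canonical through a minimizing choice, although any selection inside the correspondence suffices for the estimates.
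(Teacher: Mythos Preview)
Your argument is correct and is essentially the standard proof via correspondences found in the cited source \cite[p.~260]{BBI01}. The paper itself does not supply a proof of this proposition; it is quoted directly from that reference, so there is nothing further to compare.
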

\noindent
In particular the equivalence remains true if we interchange $X_n$ and $X$ in the second statement.\\
The property of being a length space is stable under Gromov-Hausdorff convergence:
\begin{prop}\comp{p. 265}{BBI01}
Let $X$ be a space that can be obtained as the Gromov-Hausdorff limit of compact length spaces. Then $X$ is a compact length space. \end{prop}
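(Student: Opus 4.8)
The plan is to verify the two defining properties of the conclusion separately: compactness and the length property. Throughout I would invoke Proposition~\ref{prop_almost_isom} to fix, for each $n\inN$, an $\varepsilon_n$-isometry $f_n\colon X_n\to X$ with $\conv{\varepsilon_n}{0}$, where $\seq{X}{n}$ denotes the approximating sequence of compact length spaces. These almost isometries are the only bridge between the intrinsic geometry of the $X_n$ and that of $X$, so every estimate will be phrased in terms of $dis(f_n)\le\varepsilon_n$ and the $\varepsilon_n$-net property of $f_n(X_n)$.

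For compactness I would argue that $X$ is complete and totally bounded. Completeness is automatic, since a Gromov-Hausdorff limit of compact (hence complete) metric spaces is complete; equivalently, the class of compact metric spaces is closed under Gromov-Hausdorff convergence. For total boundedness, fix $\delta>0$ and choose $n$ with $\varepsilon_n<\delta/2$. As $X_n$ is compact it admits a finite $\delta/2$-net; its image under $f_n$ is a finite set, and combining the $\varepsilon_n$-net property of $f_n(X_n)$ with the distortion bound shows this image is a finite $\delta$-net in $X$. Hence $X$ is totally bounded, and being complete it is compact.

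For the length property I would use the standard characterization \comp{p. 264}{BBI01} that a complete metric space is a length space precisely when any two of its points admit $\varepsilon$-midpoints for every $\varepsilon>0$; that is, for all $x,y$ and all $\varepsilon>0$ there is a point $z$ with $\max\cp{d(x,z),d(y,z)}\le\tfrac12 d(x,y)+\varepsilon$. Given $x,y\in X$ and $\varepsilon>0$, I would first use the net property to pick $x_n,y_n\in X_n$ with $f_n(x_n),f_n(y_n)$ at distance at most $\varepsilon_n$ from $x,y$, so that $|d(x_n,y_n)-d(x,y)|\le 3\varepsilon_n$. Since $X_n$ is a length space it contains an $\varepsilon_n$-midpoint $z_n$ of $x_n,y_n$. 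Transporting $z_n$ back by $f_n$ and controlling the accumulated error via the distortion inequality, I would show that $f_n(z_n)$ is an approximate midpoint of $x,y$ whose defect is $O(\varepsilon_n)$ and hence tends to $0$. Choosing $n$ large then produces a genuine $\varepsilon$-midpoint of $x,y$ in $X$, and completeness upgrades this to the length property.

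The main obstacle is bookkeeping the error terms so that the several contributions — the distortion $\varepsilon_n$, the two net defects, and the midpoint tolerance inside $X_n$ — combine into a single bound that still vanishes in the limit; the crucial structural point is that passing a midpoint through an $\varepsilon_n$-isometry degrades the estimate only \emph{additively}, not multiplicatively, which is exactly what lets the defect be absorbed into the prescribed $\varepsilon$. A secondary point worth stating cleanly is the midpoint characterization of length spaces itself, since the completeness of $X$ established in the first step is precisely the hypothesis that converts the mere existence of $\varepsilon$-midpoints into the length property.
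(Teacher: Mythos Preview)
The paper does not supply its own proof of this proposition; it is quoted directly from \cite[p.~265]{BBI01} and used as a black box. Your proposal reconstructs the standard argument found in that reference (approximate midpoints transported through $\varepsilon_n$-isometries), and it is correct.

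One small remark on presentation: your treatment of compactness is slightly circular as written. Saying ``the class of compact metric spaces is closed under Gromov--Hausdorff convergence'' is essentially the compactness part of the statement you are proving, so it should not be invoked as justification. The clean way is either to note, as the paper itself recalls just before this proposition, that a Gromov--Hausdorff convergent sequence can be realized as a Hausdorff-convergent sequence inside a common compact ambient space---so the limit is closed in a compact space and hence compact---or simply to run your total-boundedness argument together with completeness of $X$ (which one may take as part of the data of a limit space). The length-space half of your argument, via the $\varepsilon$-midpoint characterization and additive error control, is exactly the proof in \cite{BBI01} and needs no change.
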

\noindent 
Finally we state Whyburn's theorem on the limits of discs:
\begin{trm}\comp{p. 422}{Why35}\label{trm_lim_disc}
Let $X$ be a space that can be obtained as the Gromov-Hausdorff limit of length spaces $\seq{X}{n}$ being homeomorphic to the 2-disc. Moreover we assume that $\seq{\partial X}{n}$ is convergent. Then $X$ is a compact length space satisfying the following properties:
\begin{itemize}
\item[1)] The maximal cyclic subsets of $X$ are homeomorphic to the 2-sphere or the 2-disc.
\item[2)] $X$ is a generalized cactoid with at most one boundary component.
\item[3)] The sequence $\seq{\partial X}{n}$ converges to a subset of $\partial X$.
\end{itemize}
\end{trm}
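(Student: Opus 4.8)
The plan is to move the problem into a single ambient space and then extract the structure of $X$ from the cyclic element theory of Peano continua, with the planarity of the approximating discs supplying the decisive topological input. First I would use the common-ambient-space realization described in the preliminaries (see \cite{Gro81}) to embed all $X_n$ together with $X$ isometrically in a compact metric space so that $X_n\to X$ in the Hausdorff metric; the assumption that $\seq{\partial X}{n}$ converges then yields a compact set $B$ with $\partial X_n\to B$, and since $\partial X_n\subseteq X_n\to X$ we obtain $B\subseteq X$. By the stability of the length-space property under Gromov-Hausdorff convergence, $X$ is a compact length space, hence geodesic, path-connected and locally path-connected, i.e. a Peano continuum; this is what legitimizes the use of cyclic element theory in the sequel.

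The core of the argument is to determine the homeomorphism type of each maximal cyclic subset $T\subseteq X$. For this I would approximate $T$ by the pieces of the $X_n$ that Hausdorff-converge to it under the $\varepsilon_n$-isometries $f_n\colon X_n\to X$ supplied by Proposition \ref{prop_almost_isom}. Since every $X_n$ is homeomorphic to the 2-disc and hence planar, these pieces are planar, so $T$ arises as a cyclicly connected limit of planar pieces. The decisive classification statement is that such a $T$ must be homeomorphic to the 2-sphere or the 2-disc: planarity excludes cyclic elements of higher genus or of non-orientable type, which is exactly what makes the conclusion special to discs. I expect this classification to be the \emph{main obstacle}; it is the genuinely topological heart of the theorem and is essentially the content of Whyburn's original argument.

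Granting the classification, statement 1) is immediate. For statement 2), condition 1) of Definition \ref{def_gen} holds trivially, since no maximal cyclic subset fails to be a 2-sphere or a 2-disc, and it remains to produce a finite admissible family $C_1,\ldots,C_n$ covering the boundary components of the disc-elements; I would build this family from the limit set $B$ of the boundary circles together with the tree-like arrangement of the cyclic elements. The bound of at most one boundary component uses that each $\partial X_n$ is a circle and hence connected, so that its Hausdorff limit $B$ is a continuum, which forces the boundary of the resulting cactoid to reduce to a single component. Finally, statement 3) reduces to the inclusion $B\subseteq\partial X$, which I would establish by verifying that a limit of boundary points of the discs exhibits the local arc-type (half-disc) structure characteristic of boundary points of the cactoid, rather than lying in the interior of a sphere-element. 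The surrounding steps — the ambient-space setup, the connectedness bookkeeping for $B$, and the admissibility verification — are comparatively routine once the classification of cyclic elements and the Peano-continuum framework are in place.
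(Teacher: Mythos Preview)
The paper does not supply a proof of this theorem at all: it is stated in the preliminaries with the citation \cite[p.~422]{Why35} and is used as a black box thereafter. What the paper adds is a reformulation of Whyburn's result in its own language (generalized cactoid, the boundary $\partial X$ in the sense of Definition~\ref{def_gen} and Lemma~\ref{lem_boundary}); the translation from Whyburn's original statement to this formulation is left implicit. So there is no ``paper's own proof'' to compare your proposal against, and your sketch is effectively an attempt to reconstruct Whyburn's argument together with that translation.

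On the substance of your sketch: your identification of the classification of maximal cyclic subsets as the crux is correct, and your handling of items 1) and 2) is in the right spirit. For item 3), however, your proposed mechanism is off. The boundary $\partial X$ of a generalized cactoid is \emph{not} characterized by a local ``half-disc'' structure at its points; it is defined globally in Lemma~\ref{lem_boundary} as the maximal minimal pre-boundary, i.e.\ a certain admissible subcontinuum arrangement covering the boundary circles of the disc-type cyclic elements. A point of $\partial X$ can sit in a 1-cactoid branch with no $2$-dimensional local structure whatsoever. To obtain $B\subseteq\partial X$ you must instead show that $B$ is an admissible subcontinuum of $X$ (it meets each maximal cyclic subset in at most a boundary circle) and that it covers all boundary circles of the disc-type elements; then $B$ is a pre-boundary with one component, hence minimal, hence contained in $\partial X$ by Lemma~\ref{lem_boundary}. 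This is exactly the kind of admissibility verification carried out later in the paper for the regular case (compare Lemma~\ref{lem_b_hom_S}, Lemma~\ref{lem_reg_max} and Proposition~\ref{prop_reg_lim}), and Whyburn's original argument proceeds along analogous lines.
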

\subsection{Limits of Closed Surfaces}
As already mentioned the author dealt with the limits of closed length surfaces in a previous work. The aim of this subsection is to repeat some of the key results in \cite{Dot24}.\\ 
Our first result states two topological properties of the limit spaces. Throughout this work the term dimension refers to the covering dimension.
\begin{prop}\comp{p. 1}{Dot24}\label{prop_topo_closed}
Let $X$ be a space that can be obtained as the Gromov-Hausdorff limit of length spaces being homeomorphic to a fixed closed surface. Then the following statements apply:
\begin{itemize}
\item[1)] $X$ is at most 2-dimensional.
\item[2)] $X$ is locally simply connected.
\end{itemize}
\end{prop}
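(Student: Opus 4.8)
The plan is to reduce both assertions to Whyburn's theorem on limits of discs (Theorem \ref{trm_lim_disc}) by cutting the fixed surface into finitely many discs. Fix a closed surface $\Sigma$ together with a sequence of length metrics realising $\seq{X}{n}$, and let $f_n\colon X_n\to X$ be the $\varepsilon_n$-isometries provided by Proposition \ref{prop_almost_isom}. Choose once and for all a finite CW-decomposition of $\Sigma$ whose closed $2$-cells $D^1,\dots,D^m$ are discs meeting only along the $1$-skeleton $G$. For each $n$ this transfers to a covering $X_n=\bigcup_i D^i_n$ by closed discs glued along a homeomorphic copy $G_n$ of $G$.

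First I would equip each piece $D^i_n$ with its intrinsic length metric and pass to a subsequence along which every $\seq{D^i}{n}$ Gromov-Hausdorff converges to a compact length space $Y^i$, the boundaries of the pieces converge, and the graphs $G_n\subset X_n$ converge to a compact set $\Gamma\subset X$. Since $G$ is a fixed finite graph, $\Gamma$ is a limit of length spaces homeomorphic to $G$ and hence at most $1$-dimensional. Each $Y^i$ is a Gromov-Hausdorff limit of length discs with convergent boundaries, so by Theorem \ref{trm_lim_disc} it is a generalized cactoid; in particular every $Y^i$ is at most $2$-dimensional and locally simply connected.

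The main work — and the step I expect to be the genuine obstacle — is to assemble these pieces into a description of $X$. The difficulty is that the intrinsic metric on $D^i_n$ dominates but need not be comparable to the metric induced from $X_n$, so a priori the limit $Y^i$ maps onto its image $A_i\coloneqq\lim f_n\p{D^i_n}\subset X$ only by a $1$-Lipschitz surjection, and continuous surjections may raise dimension. To control this I would take the cutting curves in $G$ to be \emph{taut}, i.e.\ length-minimising in their homotopy class relative to the vertices (such minimisers exist since compact length surfaces are geodesic). This forces the two metrics on each $D^i_n$ to agree up to an error tending to $0$ with the mesh of the decomposition, so that $Y^i$ is homeomorphic to $A_i$ and $X=\bigcup_{i=1}^m A_i$ is exhibited as a finite union of closed, at most $2$-dimensional subsets glued along the at most $1$-dimensional set $\Gamma$.

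Granting this, both conclusions follow. For statement 1) the finite sum theorem for covering dimension gives $\dim X\le\max_i\dim A_i\le 2$. For statement 2), fix $x\in X$ and a neighbourhood $U$; a sufficiently small loop $\gamma$ near $x$ meets only those $A_i$ containing $x$, each locally simply connected, and the transitions across $\Gamma$ are absorbed because $\Gamma$ is $1$-dimensional and the $A_i$ are Peano spaces, so a van Kampen / nerve argument contracts $\gamma$ inside $U$. Finally I would note why a naive uniform local contractibility of $\seq{X}{n}$ cannot be invoked directly: a pinching neck produces short essential loops in $X_n$, so the sequence fails to be equi-locally simply connected even though the limit is not — it is precisely the passage to the disc pieces that converts such pinches into the point-identifications and cactoid structure compatible with local simple connectivity of $X$.
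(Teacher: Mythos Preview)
A preliminary remark: the present paper does not prove this proposition; it is cited from the author's earlier work \cite{Dot24} and then used, via the doubling construction of Notation~\ref{nota_doub}, to deduce the analogous statements for surfaces with boundary. So there is no proof here to compare against, and what follows assesses your argument on its own terms.

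The reduction to Theorem~\ref{trm_lim_disc} breaks at precisely the step you flag as delicate, and the proposed fix via taut cutting curves does not work. Take the torus $T=\mathbb{R}^2/\mathbb{Z}^2$ with the one-vertex CW structure whose two edges represent $(1,0)$ and $(0,1)$ in $\pi_1(T)=\mathbb{Z}^2$, and equip $T$ with the flat metric $g_n$ pulled back along the shear $A_n=\bigl(\begin{smallmatrix}1&n\\0&1\end{smallmatrix}\bigr)$. Since $A_n\p{\mathbb{Z}^2}=\mathbb{Z}^2$, every $\p{T,g_n}$ is isometric to the standard square torus and the sequence converges trivially. Yet the taut representative of the edge in class $(0,1)$ has $g_n$-length $\lvert A_n(0,1)\rvert=\sqrt{n^2+1}$, so the single $2$-cell with its intrinsic metric is a Euclidean parallelogram of diameter of order $n$. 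The intrinsic disc pieces are therefore not precompact, and there is no limit $Y^i$ to which Whyburn's theorem could be applied. Even in situations where the intrinsic pieces do converge, tautness does not make them metrically convex: on the standard square torus cut along its two geodesic generators, the unit-square $2$-cell contains pairs of points at intrinsic distance close to $1$ but ambient distance close to $0$. Hence the $1$-Lipschitz surjection $Y^i\to A_i$ is a genuine metric quotient, and neither $\dim A_i\le 2$ nor local simple connectedness of $A_i$ can be read off from the corresponding property of $Y^i$. The closing van~Kampen sketch for part~2) then rests on hypotheses that have not been secured.
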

\noindent
The limit spaces can be described as follows:
\begin{trm}\comp{p. 13}{Dot24}\label{trm_lim_closed}
Let $X$ be a space that can be obtained as the Gromov-Hausdorff limit of closed length surfaces $\seq{X}{n}$ whose connectivity number is equal to $c$. Then $X$ can be obtained by a successive application of $k$ metric 2-point identifications to some geodesic generalized cactoid $Y$. Moreover the following statements apply:
\begin{itemize}
\item[1)]  The connectivity number of $Y$ is less or equal to $c-2k$ and its boundary is empty.
\item[2)] If $X_n$ is orientable for infinitely many $n\inN$, then the maximal cyclic subsets of $Y$ are orientable.
\item[3)] If $X_n$ is non-orientable for infinitely many $n\inN$ and the maximal cyclic subsets of $Y$ are orientable, then the connectivity number of $Y$ is less than $c$. 
\end{itemize}
\end{trm}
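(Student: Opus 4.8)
The plan is to reduce the analysis to Whyburn's disc theorem by cutting each surface into finitely many discs along a complex whose size is bounded purely in terms of $c$. First I would pass to a subsequence along which all $X_n$ are homeomorphic to a single fixed closed surface $S$; this is possible because the fixed connectivity number $2-\chi(S)=c$ bounds the genus, leaving only finitely many homeomorphism types. Then $S$ carries a CW structure with a number of cells bounded in terms of $c$, which I realise (approximately) on each $X_n$ by geodesic arcs, decomposing $X_n$ into finitely many closed regions homeomorphic to the $2$-disc and glued along a $1$-complex. Passing to a further subsequence and using that a convergent sequence embeds isometrically into a common compact space, I may assume that the images of all vertices, edges and closed two-cells Hausdorff converge.

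The core step is to analyse these limits cell by cell and to reassemble them. Each closed two-cell is a Hausdorff limit of discs, so Theorem~\ref{trm_lim_disc} applies: its limit is a generalised cactoid whose maximal cyclic subsets are spheres or discs and whose boundary circle converges into the single admissible boundary component of that cell-limit. Gluing the cell-limits along the limit $1$-skeleton produces $X$, and its maximal cyclic subsets are the surface pieces generated inside the cells together with the higher-genus pieces that persist when several cells limit onto a common non-collapsing two-dimensional region. This is the generalised-cactoid model $Y$, which inherits the length-space structure of $X$ and can therefore be taken geodesic. The $k$ metric $2$-point identifications are located exactly where the $1$-skeleton degenerates: when a non-contractible loop in the skeleton has realised length tending to zero, its entire image collapses to a single point of $X$, and the two sheets meeting there are pinched together at that point rather than along a curve; each such loop contributes one identification, and resolving it recovers the corresponding piece of $Y$.

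The quantitative statements then follow from an Euler-characteristic accounting carried through the limit. Each handle of $S$ either survives as genus of a maximal cyclic subset of $Y$, contributing $2$ to its connectivity number, or has its defining loop collapse, producing one of the $k$ identifications while removing $2$ from the budget; since collapse can only destroy complexity, I expect the inequality $(\text{connectivity of }Y)+2k\le c$, which is statement~1). The boundary of $Y$ is empty because $S$ is closed, so every edge of the decomposition is shared by two cells and no boundary curve can persist --- the only degenerations are the collapsing loops, which are absorbed into the identifications. For statements 2) and 3) I would track orientability through the decomposition: if infinitely many $X_n$ are orientable then every surviving two-dimensional region, hence every maximal cyclic subset of $Y$, is orientable; conversely, if the $X_n$ are non-orientable while $Y$ is orientable, the non-orientability must have been destroyed along a collapsing orientation-reversing loop, a collapse that consumes strictly more of the budget than an orientable handle and thereby forces the connectivity of $Y$ below $c$.

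I expect the main obstacle to lie in the core step, namely in proving that the reassembled limit is genuinely a generalised cactoid: that its maximal cyclic subsets are surfaces and that two such pieces meet only in a point or along a boundary curve, as admissibility demands. The cell-limits delivered by Theorem~\ref{trm_lim_disc} are controlled only individually, so the difficulty is to show that they fit together along the limit $1$-skeleton without creating wild, non-surface local behaviour. Here the essential inputs are the at-most-two-dimensionality and local simple connectivity of $X$ from Proposition~\ref{prop_topo_closed}, which I would use to exclude infinite local complexity and to force the intersection pattern required by admissibility.
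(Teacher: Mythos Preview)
First, note that this theorem is not proved in the present paper: it is quoted from the author's earlier work \cite{Dot24} and used as input. There is therefore no proof here to compare against literally. However, the paper's proof of the analogous Theorem~\ref{trm_main_first} (the with-boundary case) makes the intended method transparent, and it differs substantially from yours.

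The paper's strategy is an induction on the connectivity number $c$. If the sequence is \emph{regular}, meaning no non-contractible Jordan curve has diameter tending to zero, then Proposition~\ref{prop_lim_reg_closed} shows the limit is already a generalised cactoid with a single non-sphere maximal cyclic subset homeomorphic to $X_n$. If the sequence is not regular, one locates a sequence $\gamma_n$ of short non-contractible Jordan curves, collapses them, and uses the classification in Proposition~\ref{prop_clas_Jord} to write $X_n/\gamma_n$ as a wedge sum or a $2$-point identification of surfaces of strictly smaller connectivity; the induction hypothesis then handles each piece, and the bookkeeping for $k$, orientability, and the connectivity bound is read off directly from that classification. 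The essential feature is that the curve to collapse is \emph{found}, not prescribed: it is whichever non-contractible curve happens to be short in the given metrics.

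Your approach fixes a CW decomposition of the model surface $S$ in advance and tracks it through the sequence. The central gap is that degenerations need not occur along your chosen $1$-skeleton. A handle can collapse along a curve that is not freely homotopic to any edge-loop of your complex; then every skeleton edge keeps length bounded below while some other non-contractible loop shrinks to a point. Your disc cells converge to non-degenerate cactoids, the reassembled space does not exhibit the pinch, and you have no mechanism to extract the required $2$-point identification or to debit the connectivity budget. Conversely, a skeleton edge can collapse without any topological degeneration whatsoever (two vertices of your complex may simply drift together inside a single non-degenerating region), so edge collapse is not a reliable signal either. The paper's adaptive choice of short curves is exactly what avoids this mismatch between the fixed combinatorics and the varying geometry.

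There is also a more technical gap. To apply Theorem~\ref{trm_lim_disc} to a closed $2$-cell $D_n\subset X_n$ you need the $D_n$, equipped with their \emph{intrinsic} length metrics, to Gromov--Hausdorff converge; Hausdorff convergence as subsets of a common ambient space does not suffice. Without control on how geodesics of $X_n$ cross $\partial D_n$ the intrinsic metrics on the cells may not converge at all, so the hypothesis of Whyburn's theorem is not met.
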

\noindent
Now we consider sequences with additional topological control: Throughout this work we call a simple closed curve a $\emph{Jordan curve}$. Let $\seq{X}{n}$ be a sequence of closed length surfaces. Then the sequence is denoted as \emph{regular} provided $\inf\cp{diam\p{J_n}\colon n\inN}$ is positive for every sequence $\seq{J}{n}$ such that $J_n$ is a non-contractible Jordan curve in $X_n$.\\
If we restrict the last theorem to regular sequences, then we derive the following result:
\begin{prop}\comp{p. 12}{Dot24}\label{prop_lim_reg_closed}
Let $X$ be a space that can be obtained as the Gromov-Hausdorff limit of closed length surfaces $\seq{X}{n}$ whose connectivity number is equal to $c>0$. If the sequence is regular, then $X$ is a compact length space satisfying the following property: All but one maximal cyclic subset are homeomorphic to the 2-sphere and one maximal cyclic subset is homeomorphic to $X_n$ for almost all $n\inN$.
\end{prop}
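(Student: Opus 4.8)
The plan is to feed the regular sequence into the structure theorem for limits of closed surfaces (Theorem \ref{trm_lim_closed}) and then exploit regularity to rule out any loss of topology in the limit. First I would reduce to a single homeomorphism type. As the $X_n$ are closed and share the fixed connectivity number $c$, the classification of closed surfaces leaves at most two possible homeomorphism types, namely the orientable and the non-orientable closed surface of connectivity number $c$ (only the latter when $c$ is odd). Since a subsequence of a regular sequence is again regular and converges to the same limit, I may pass to a subsequence on which every $X_n$ is homeomorphic to a fixed closed surface $S$ of connectivity number $c$; the passage from this subsequence back to almost all $n$ is deferred to the last step. Theorem \ref{trm_lim_closed} then produces a geodesic generalized cactoid $Y$ and a number $k\inN_0$ of metric 2-point identifications carrying $Y$ to $X$, with $\partial Y=\emptyset$ and connectivity number of $Y$ at most $c-2k$. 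Emptiness of the boundary forces every maximal cyclic subset of $Y$ to be a closed surface, so the connectivity number of $Y$ is the sum of the connectivity numbers of its maximal cyclic subsets, i.e.\ the sum over those that are not 2-spheres. The assertion thus reduces to showing that $k=0$, that this sum equals $c$, and that a single summand accounts for it.

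The main step is to show that regularity preserves the full topological complexity, i.e.\ that the connectivity number of $Y$ is at least $c$. The guiding principle is that each metric 2-point identification and each drop of the connectivity number below $c$ is caused by a non-contractible Jordan curve in $X_n$ whose diameter tends to zero: an identification of two points of $Y$ is witnessed by a thin neck of $X_n$ that pinches, and a lost handle is witnessed by a non-contractible curve that is collapsed. Concretely I would track a system of non-contractible Jordan curves representing the topology of $S\cong X_n$ through the almost isometries $f_n\colon X_n\to X$ from Proposition \ref{prop_almost_isom}; regularity prevents any of them from shrinking, so each persists to a non-contractible Jordan curve in $X$ lying in some maximal cyclic subset of $Y$, whose capacity to carry independent such curves is exactly its connectivity number. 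This yields connectivity number of $Y$ at least $c$, and together with the bound $c-2k\le c$ of Theorem \ref{trm_lim_closed} it forces equality and $k=0$. Hence $X=Y$ is a generalized cactoid of connectivity number $c$ with empty boundary.

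It remains to concentrate the connectivity number in one maximal cyclic subset and to identify its type. Suppose there were two maximal cyclic subsets $T,T'$ that are not 2-spheres. They meet the remainder of $X$ only in cut points, so the approximating surface $X_n$ possesses a thin neck separating the regions approximating $T$ and $T'$; a Jordan curve through that neck separates two parts of $X_n$ each carrying genus, hence is non-contractible, yet can be chosen with diameter tending to zero, contradicting regularity. Therefore there is a unique non-spherical maximal cyclic subset $T_0$, all others are 2-spheres, and the connectivity number of $T_0$ equals $c$. Being a closed surface of connectivity number $c$, the subset $T_0$ is homeomorphic to $S$ or to the other closed surface of connectivity number $c$; here I invoke parts 2) and 3) of Theorem \ref{trm_lim_closed}: if $S$ is orientable then $T_0$ is orientable, and if $S$ is non-orientable then an orientable $T_0$ would force the connectivity number of $Y$ to be strictly below $c$, which is impossible. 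In either case $T_0\cong S\cong X_n$. Finally, if both homeomorphism types occurred for infinitely many $n$, applying the preceding argument to each type-subsequence would make $T_0$ homeomorphic to two non-homeomorphic surfaces at once; hence one type occurs for all but finitely many $n$, giving the statement for almost all $n$.

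The hard part is the main step: rigorously matching the combinatorial collapse data, the number $k$ of identifications and the deficit $c-\text{conn}(Y)$, with non-contractible Jordan curves in $X_n$ of vanishing diameter. This demands careful control of how simple closed curves behave under the almost isometries $f_n$ and of the tree-like cut-point structure of $Y$, so that each pinch or lost handle can be realized by a curve that is simultaneously non-contractible in $X_n$ and arbitrarily small; here the local simple connectivity provided by Proposition \ref{prop_topo_closed} would be the key tool for producing and homotopically classifying these curves.
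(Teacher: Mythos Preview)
The paper does not prove this proposition; it is quoted verbatim from \cite[p.~12]{Dot24} and used as a black box. So there is no proof in the present paper to compare against. That said, two remarks on your proposal are in order.

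First, there is a logical ordering issue. You invoke Theorem~\ref{trm_lim_closed} (from \cite[p.~13]{Dot24}) to derive Proposition~\ref{prop_lim_reg_closed} (from \cite[p.~12]{Dot24}). In the original source the regular case is established \emph{before} the general structure theorem, and the latter is obtained by an inductive reduction to the former, exactly as Theorem~\ref{trm_main_first} in the present paper is reduced to Proposition~\ref{prop_reg_lim}. Deducing the regular case from the general one therefore risks circularity relative to how \cite{Dot24} is actually organized; at the very least you would need to verify independently that the proof of Theorem~\ref{trm_lim_closed} does not already consume Proposition~\ref{prop_lim_reg_closed}.

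Second, setting aside the ordering, the substantive gap is precisely where you locate it. Your ``main step'' asserts that any deficit $c-\operatorname{conn}(Y)$ or any identification $k>0$ is witnessed by non-contractible Jordan curves in $X_n$ of vanishing diameter, but you do not prove this; the forward argument you sketch (pushing a system of curves from $X_n$ into $X$ via $\varepsilon_n$-isometries and reading off their homotopy classes) does not work as stated, since $f_n$ is neither continuous nor injective and the image of a Jordan curve need not be one, nor need non-contractibility survive. The actual mechanism in \cite{Dot24} runs in the opposite direction: one analyzes the cut-point structure of the limit to manufacture, near each pinch or collapse, a short separating or essential curve back in $X_n$. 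Your paragraph on two non-spherical pieces $T,T'$ is the one place where you carry this out correctly in outline; extending that same idea to control $k$ and the full connectivity count is the real content of the proposition, and your proposal stops short of supplying it.
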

\subsection{Topology of Peano Spaces}
In this subsection we discuss results on the topology of Peano spaces. As main source about this topic we used \cite{Why42}.\\  
A compact connected metric space $X$ is called a \emph{continuum}. Further we denote a subset of $X$ being a continuum as a \emph{subcontinuum} of $X$. If $X$ is also locally connected, then we say that $X$ is a Peano space.\\
From a topological point of view Peano spaces and compact length spaces are the same:
\begin{prop}\comp{p. 1109}{Bin49}\label{prop_Peano_length}
Every Peano space is homeomorphic to a compact length space.   \end{prop}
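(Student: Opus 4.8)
The plan is to reduce the statement to a classical metrization theorem and then to check that the resulting metric is intrinsic. One might first attempt the direct route: given a Peano space $X$ with metric $d$, equip it with the induced intrinsic metric $d_\ell(x,y)=\inf\{L(\gamma)\}$, where the infimum ranges over paths $\gamma$ from $x$ to $y$ and $L$ denotes length. Since a Peano space is arcwise connected, such paths exist; however, on a general Peano space this infimum may be infinite or may induce a strictly finer topology, so $d_\ell$ need not be a compatible length metric. Circumventing this difficulty is precisely the point of the argument.

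The crucial input, which is the content of \cite{Bin49}, is the convex metrization theorem: every Peano space $X$ admits a metric $\rho$ inducing the original topology that is convex in the sense of Menger, meaning that for any two distinct points $x,y$ there exists $z$ with $\rho(x,z)+\rho(z,y)=\rho(x,y)$ and $0<\rho(x,z)<\rho(x,y)$. I would take this as a black box; its proof proceeds by partitioning the continuum into successively finer Peano pieces and assembling a compatible convex metric, and this is where the genuine work lies.

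Granting such a $\rho$, I would argue that $(X,\rho)$ is a compact length space. It is compact, being homeomorphic to $X$, and hence complete. By Menger's theorem a complete convex metric space is geodesic: from convexity together with completeness one obtains for every pair a metric midpoint, and iterating the midpoint construction over the dyadic points of the interval $[0,\rho(x,y)]$ yields, after passing to the closure, an isometric embedding of $[0,\rho(x,y)]$ joining $x$ to $y$. In particular $\rho(x,y)$ equals the infimum of lengths of paths between $x$ and $y$, which is the defining property of a length space. Thus $X$ is homeomorphic to the compact length space $(X,\rho)$. The main obstacle is entirely absorbed into the convex metrization theorem; once it is available, the passage from a convex metric to a geodesic one is the routine midpoint argument and requires no further control over the topology of $X$.
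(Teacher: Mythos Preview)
Your proposal is correct and follows exactly the route indicated by the paper: the proposition is stated there without proof, citing Bing \cite{Bin49}, whose convex metrization theorem is precisely the black box you invoke. Your additional observation---that a complete convex metric is geodesic via the standard midpoint argument, hence a length metric---is the routine step needed to pass from Bing's formulation to the proposition as stated, and the paper simply takes this for granted.
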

\noindent
We present some basic properties of Peano spaces:
\begin{lem}\comp{pp. 65, 69, 71, 79}{Why42}\label{lem_Peano}
Let $X$ be a Peano space and $T\subset X$ be a maximal cyclic subset. Then the following statements apply:
\begin{itemize}
\item[1)] There are only countably many maximal cyclic subsets in $X$.
\item[2)] There are only countably many connected components in $X\setminus T$.
\item[3)] If $\seq{C}{n}$ is a sequence of pairwise distinct connected components of $X\setminus T$, then $\conv{diam\p{C_n}}{0}$. 
\item[4)] If $\seq{T}{n}$ is a sequence of pairwise distinct maximal cyclic subsets of $X$, then $\conv{diam\p{T_n}}{0}$. 
\item[5)] If $C$ is a connected component of $X\setminus T$, then there is some $x\in T$ such that $\partial C=\cp{x}$.
\item[6)] If $T_1,T_2\subset X$ are distinct maximal cyclic subsets, then $\left|T_1\cap T_2\right|\le 1$.
\end{itemize}
\end{lem}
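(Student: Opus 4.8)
The plan is to recognize these six statements as the basic pillars of Whyburn's cyclic element theory and to derive them from three standing facts about a Peano space $X$: local connectedness, so that every connected component of an open set is itself open; separability, so that any family of pairwise disjoint nonempty open subsets is countable; and the equivalence of ``Peano space'' with uniform local connectedness (Property $S$). I would also use that every connected open subset of $X$ is arcwise connected, and that a cyclicly connected Peano space admits, through any finite collection of its points, a single Jordan curve. With these tools fixed, I would prove the statements out of the listed order, grouping the elementary ones first and the structural ones afterwards.

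The cardinality and null-sequence assertions come first. For statement $2)$, the connected components of the open set $X\setminus T$ are disjoint nonempty open sets by local connectedness, hence countable by separability. For statements $3)$ and $4)$, the pieces in question are connected (each component, and each maximal cyclic subset), so I would invoke Property $S$ together with compactness: were infinitely many of them to have diameter bounded below by some $\varepsilon>0$, these pairwise almost disjoint connected sets would accumulate at a point where uniform local connectedness fails, a contradiction. Statement $1)$ then follows formally from $4)$: since each maximal cyclic subset is nondegenerate and therefore has positive diameter, and since by $4)$ only finitely many can have diameter at least $1/n$, the family is a countable union of finite sets.

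The conceptual core is statement $6)$, to which I would reduce $5)$ as well. Suppose two distinct maximal cyclic subsets $T_1,T_2$ shared two points $a\ne b$. Given $x\in T_1$ and $y\in T_2$, I would take a Jordan curve through $\{x,a,b\}$ inside the cyclicly connected set $T_1$ and a Jordan curve through $\{y,a,b\}$ inside $T_2$; each is split by $a$ and $b$ into two arcs, and splicing the arc through $x$ with the arc through $y$ produces a Jordan curve in $T_1\cup T_2$ joining $x$ and $y$. Thus $T_1\cup T_2$ would be cyclicly connected and strictly larger than $T_1$, contradicting maximality, so $|T_1\cap T_2|\le 1$. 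For statement $5)$, note first that for a component $C$ of $X\setminus T$ one has $\partial C\subset T$ and $\partial C\ne\emptyset$; if $\partial C$ contained two points $a\ne b$, I would join them by an arc with endpoints $a,b$ and interior in $C$ (using arcwise connectedness of the open set $C$) and by a second arc inside the cyclicly connected set $T$, obtaining a Jordan curve through a point of $C$. This again enlarges $T$ to a cyclicly connected set, contradicting maximality, whence $\partial C$ is a single point.

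I expect the main obstacle to lie precisely in the two splicing arguments for $5)$ and $6)$: the concatenated arcs must be chosen so that the resulting closed curve is genuinely simple, with no spurious self-intersections arising from the possibly larger overlap $T_1\cap T_2$ or from the approach of the $C$-arc to $\partial C$. Controlling these intersections is exactly what the finite-set Jordan curve theorem and the cut-point structure of cyclic element theory provide, and a naive concatenation does not suffice. The remaining statements, by contrast, are routine once the standing facts about separability and Property $S$ are in place.
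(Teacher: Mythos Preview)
The paper does not prove this lemma; it is stated with a bare citation to Whyburn's monograph and no proof environment follows. So there is no ``paper's own proof'' to compare against, and your sketch is essentially the classical cyclic-element argument that Whyburn gives.

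Two remarks on the sketch itself. First, your ordering has a small circularity: in arguing statement~4) you describe the maximal cyclic subsets as ``pairwise almost disjoint'', but that is precisely statement~6), which you only prove afterwards. In Whyburn's development the one-point intersection property (your~6)) is established first, and the null-family property (your~4)) is deduced from it; you should swap the order. Second, your worry about spurious intersections in the splicing step is justified for~6) but not for~5). In~5) the arc with interior in $C$ meets $T$ only at its endpoints by construction, so its union with any arc in $T$ between the same endpoints is automatically a Jordan curve. For~6), rather than splicing, it is cleaner to argue via Proposition~\ref{prop_cycl_con_trm}: if $\left|T_1\cap T_2\right|\ge 2$ then $T_1\cup T_2$ is a Peano continuum, and one checks directly that removing any single point leaves it connected (using that neither $T_i$ has a cut point and that the intersection still contains a point), so $T_1\cup T_2$ is cyclicly connected and strictly contains $T_1$, a contradiction. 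This avoids the simplicity issue entirely.
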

\noindent
Let $X$ be a continuum. Then we say that $A\subset X$ separates $x,y\in X$ if the points lie in distinct connected components of $X\setminus A$. Moreover we call two distinct points of $X$ \emph{conjugate} to each other provided no point of $X$ separates them.\\
We state further properties of Peano spaces:
\begin{lem}\comp{pp. 65, 67, 79}{Why42}\label{lem_conj}
Let $X$ be a Peano space. Then the following statements apply:
\begin{itemize}
\item[1)] Every pair of conjugate points in $X$ can be connected by a Jordan curve in $X$.
\item[2)] Every non-degenerate cyclicly connected subset of $X$ lies in some maximal cyclic subset of $X$. 
\item[3)] If $T\subset X$ is a maximal cyclic subset and $A\subset T$ separates two points in $T$, then $A$ also separates these points in $X$.
\end{itemize}
\end{lem}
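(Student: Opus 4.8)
The three assertions are classical facts from the theory of Peano continua, and the plan is to treat them separately; the common background is that every Peano space is arcwise connected and locally arcwise connected, and that the behaviour of the components complementary to a maximal cyclic subset is controlled by Lemma~\ref{lem_Peano}.

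For statement 1) I would invoke the cyclic connectedness machinery in the form of a Menger-type two-arc theorem: if two distinct points $x,y$ of a Peano space are not separated by any single point, then there exist two arcs from $x$ to $y$ whose interiors are disjoint. Granting this, the union of the two arcs is a simple closed curve, i.e.\ a Jordan curve through $x$ and $y$, which is exactly the claim, since $x,y$ conjugate means precisely that no point of $X$ separates them. The two-arc theorem itself would be derived from arcwise connectivity together with an inductive construction of independent arcs whenever the two points survive the removal of sufficiently few points; this is the deepest input and I expect it to be the main obstacle of the lemma.

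For statement 2) a Zorn's lemma argument suffices. Let $S$ be a non-degenerate cyclicly connected subset and consider the poset of all cyclicly connected subsets of $X$ containing $S$, ordered by inclusion. The union of a chain in this poset is again cyclicly connected, because any two of its points already lie in a single member of the chain and are therefore joined by a Jordan curve inside that member, hence inside the union. Zorn's lemma then yields a maximal element $T\supseteq S$; since any cyclicly connected set properly containing $T$ would also contain $S$ and thus belong to the poset, $T$ is not a proper subset of any cyclicly connected subset of $X$, and being non-degenerate it is a maximal cyclic subset.

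For statement 3) let $A\subseteq T$ separate two points $p,q$ within $T$; I would argue by contradiction, assuming $p,q$ lie in a common connected component of $X\setminus A$. Reducing to the case that $A$ is closed, this component is arcwise connected, so there is an arc $\gamma\subseteq X\setminus A$ from $p$ to $q$. The key point is that $\gamma$ must in fact lie in $T$: any maximal open subarc of $\gamma$ running inside the open set $X\setminus T$ would be contained in a single complementary component $C$ of $X\setminus T$, and both of its endpoints would then lie in $\overline{C}\cap T$, which by statement 5) of Lemma~\ref{lem_Peano} is the single boundary point of $C$; an injective arc cannot have a non-degenerate subarc with coinciding endpoints, so no such excursion exists. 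Hence $\gamma\subseteq T\setminus A$ joins $p$ and $q$ in $T\setminus A$, contradicting that $A$ separates them in $T$. The only care needed here is the point-set reduction to closed $A$ and the arcwise connectivity of components of open subsets of Peano spaces.
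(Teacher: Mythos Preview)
The paper gives no proof of this lemma; it is quoted directly from Whyburn with a page reference, so there is nothing in the paper to compare your argument against. On its own merits your sketch is sound: the two-arc theorem is indeed the content of part~1), the Zorn argument for part~2) is clean and matches the paper's definition of maximal cyclic subset verbatim, and the ``no excursion outside $T$'' idea for part~3) is exactly right.

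The only point that does not go through as written is the reduction to closed $A$ in part~3). Replacing $A$ by its closure may swallow $p$ or $q$, and even when it does not, separation by $\overline A$ in $X$ does not imply separation by $A$ in $X$: enlarging the removed set can only create more components, never merge the ones containing $p$ and $q$. You can bypass arcs entirely. Let $K$ be the component of $X\setminus A$ containing $p$ and $q$, and let $r\colon X\to T$ be the retraction collapsing each component $C$ of $X\setminus T$ to its single boundary point $x_C$; continuity of $r$ follows from parts~3) and~5) of Lemma~\ref{lem_Peano}. Each such $C$ lies in $X\setminus A$ (since $A\subset T$), so if $C$ meets $K$ then $C\subset K$. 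If moreover $x_C\in A$, then $C$ is open in $X$ and closed in $K$ (because $\overline C\cap K=(C\cup\{x_C\})\cap K=C$), forcing $K=C$ and contradicting $p\in T$. Hence every relevant $x_C$ lies in $T\setminus A$, so $r(K)\subset T\setminus A$ is connected and contains $p$ and $q$, the desired contradiction. This handles arbitrary $A$ and subsumes your arc argument as the special case where $K$ can be taken to be an arc.
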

\noindent
Moreover we have the following characterization of cyclic connectedness:
\begin{prop}\comp{p. 79}{Why42}\label{prop_cycl_con_trm}
Let $X$ be a Peano space. Then $X$ is cyclicly connected if and only if there is no separating point in $X$.   
\end{prop}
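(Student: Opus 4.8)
The plan is to prove both implications by translating the notion of a separating point into the language of conjugacy and then invoking the first statement of Lemma \ref{lem_conj}. I would begin with the elementary observation underlying everything: a point $p\in X$ separates a pair $x,y$ precisely when $x$ and $y$ lie in distinct components of $X\setminus\{p\}$, so $p$ is a separating point of $X$ if and only if $X\setminus\{p\}$ is disconnected. Consequently, ``$X$ has no separating point'' is equivalent to ``$X\setminus\{p\}$ is connected for every $p\in X$'', which is in turn the statement that no point separates any pair, i.e.\ that every two distinct points of $X$ are conjugate.

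For the forward implication I would assume $X$ is cyclicly connected and fix an arbitrary $p\in X$. Given any two points $x,y\in X\setminus\{p\}$, cyclic connectedness yields a Jordan curve $J$ through $x$ and $y$. Since $J$ is homeomorphic to a circle, the set $J\setminus\{p\}$ is connected in either case: it equals $J$ when $p\notin J$ and is a single arc when $p\in J$. As $J\setminus\{p\}$ is a connected subset of $X\setminus\{p\}$ containing both $x$ and $y$, these points lie in the same component of $X\setminus\{p\}$. Hence $X\setminus\{p\}$ is connected and $p$ is not a separating point; since $p$ was arbitrary, $X$ has none.

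For the reverse implication I would assume $X$ has no separating point. By the opening observation every pair of distinct points of $X$ is then conjugate, so the first statement of Lemma \ref{lem_conj} provides a Jordan curve joining any such pair. This is exactly the assertion that $X$ is cyclicly connected.

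The genuine content of the argument is entirely absorbed into Lemma \ref{lem_conj}, whose construction of a connecting Jordan curve through a pair of conjugate points is the hard input; granted that, the present proposition reduces to the bookkeeping of the two definitions carried out above. The only point demanding mild care is the degenerate case of a one-point space, where the two conditions match only by convention, so I would either exclude it at the outset or treat it separately.
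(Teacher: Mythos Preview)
The paper does not supply its own proof of this proposition; it is quoted from Whyburn \cite[p.~79]{Why42} as a background fact, with no argument given. So there is nothing in the paper to compare your proposal against.

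That said, your argument is correct. The forward direction is entirely elementary, and for the converse you reduce matters to the first statement of Lemma~\ref{lem_conj}, which is itself cited from Whyburn. In effect you have observed that Proposition~\ref{prop_cycl_con_trm} is an immediate consequence of Lemma~\ref{lem_conj}(1) together with the definition of conjugate points. This is exactly the intended logical dependency: in Whyburn's treatment the cyclic connectivity theorem and the Jordan-curve-through-conjugate-points statement are packaged together, and the paper lists both on p.~79 of the same reference. Your remark about the one-point space is harmless here, since under the paper's definition a singleton is vacuously cyclicly connected and vacuously has no separating point, so no separate treatment is needed.
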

\noindent
Provided all maximal cyclic subsets of a Peano space are homeomorpic to the 1-sphere, we denote the space as a \emph{1-cactoid}.\\ 
The following criterion for 1-cactoids by Whyburn will be a helpful tool: \begin{prop}\comp{p. 417}{Why35}\label{prop_1cact}
Let $X$ be a continuum. Then the following statements are equivalent:
\begin{itemize}
\item[1)] $X$ is a 1-cactoid.
\item[2)] For every pair of conjugate points $x,y\in X$ the subset $X\setminus\cp{x,y}$ is disconnected.
\end{itemize}
\end{prop}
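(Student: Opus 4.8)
The plan is to prove Proposition \ref{prop_1cact} by establishing both implications, where the forward direction is essentially immediate and the backward direction carries the real content.

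\textbf{Forward direction (1) $\Rightarrow$ (2).} Suppose $X$ is a 1-cactoid and let $x,y\in X$ be conjugate points. By Lemma \ref{lem_conj}, statement 1), the pair can be connected by a Jordan curve $J$ in $X$. The image of $J$ is a non-degenerate cyclicly connected set, so by Lemma \ref{lem_conj}, statement 2), it lies in some maximal cyclic subset $T$, which by hypothesis is homeomorphic to the 1-sphere. Since $J$ is itself a Jordan curve inside a circle, $J$ must equal $T$. Now the two points $x,y$ sit on a circle $T\cong S^1$, and removing them disconnects the circle into two arcs; applying Lemma \ref{lem_conj}, statement 3) (separation within a maximal cyclic subset persists in $X$), I conclude $X\setminus\{x,y\}$ is disconnected. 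First I would make precise that $x,y$ are indeed conjugate \emph{inside} $T$, so that statement 3) applies.

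\textbf{Backward direction (2) $\Rightarrow$ (1).} Here I must show that every maximal cyclic subset $T\subset X$ is a circle. The strategy is to verify that $T$, as a Peano space in its own right, satisfies the criterion making it a circle: namely that removing any two of its points disconnects it. Fix a maximal cyclic $T$ and two distinct points $x,y\in T$. Since $T$ is cyclicly connected and non-degenerate, it contains no separating point (Proposition \ref{prop_cycl_con_trm}), so by Proposition \ref{prop_cycl_con_trm} applied within $T$ every pair of points of $T$ is conjugate \emph{in} $T$. I would then argue that $x,y$ remain conjugate in the ambient space $X$: a separating point of $X$ between $x$ and $y$ would, by Lemma \ref{lem_conj} statement 3) and the structure of maximal cyclic sets, restrict to a separation within $T$, contradicting conjugacy in $T$. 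With $x,y$ conjugate in $X$, hypothesis (2) gives that $X\setminus\{x,y\}$ is disconnected, and I would localize this disconnection to $T$ to conclude $T\setminus\{x,y\}$ is disconnected. Since $x,y\in T$ were arbitrary, $T$ is a non-degenerate Peano continuum in which every pair of points is a cut pair; the classical topological characterization of the circle then forces $T\cong S^1$.

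\textbf{Main obstacle.} The delicate step is the transfer of separation between $T$ and $X$ in the backward direction. Lemma \ref{lem_conj}, statement 3) gives one direction (separation in $T$ implies separation in $X$), but for the conjugacy argument I need the converse implication — that a point separating $x,y$ in $X$ forces a separation inside $T$. This requires exploiting the component structure from Lemma \ref{lem_Peano}: each component of $X\setminus T$ meets $T$ in a single point (statement 5)), so a separating point either lies on $T$ or lies in a component attached to $T$ at one point, and in the latter case it cannot separate two points of $T$. Handling this attachment structure carefully, and ensuring the disconnection of $X\setminus\{x,y\}$ genuinely descends to a disconnection of $T\setminus\{x,y\}$ rather than being absorbed by the attached components, is where I expect the bulk of the technical work to lie.
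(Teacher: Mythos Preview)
The paper does not prove this proposition at all; it is stated with a citation to Whyburn \cite[p.~417]{Why35} and used as a black box. So there is no in-paper argument to compare your proposal against.

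That said, your backward direction has a genuine gap. The hypothesis in (2) is only that $X$ is a \emph{continuum}; a 1-cactoid is by definition a \emph{Peano space} (i.e.\ locally connected) whose maximal cyclic subsets are circles. Your argument immediately starts manipulating maximal cyclic subsets of $X$ and invokes Lemma \ref{lem_Peano}, Lemma \ref{lem_conj}, and Proposition \ref{prop_cycl_con_trm}, all of which are stated for Peano spaces. You never establish that condition (2) forces $X$ to be locally connected, and without that the machinery you are using is not available. This is not a minor omission: passing from an arbitrary continuum to a Peano continuum is exactly the kind of step where pathological examples (Warsaw-circle-type continua, indecomposable continua) have to be ruled out, and that requires real work with the conjugacy hypothesis. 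The forward direction is fine, since a 1-cactoid is already a Peano space by definition and your use of the lemmas is legitimate there.
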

\subsection{Generalized Cactoids}
In this subsection we deal with generalized cactoids.\\
First we describe the topology of admissible subcontinua:
\begin{lem}\label{lem_bound_cact}
Let $X$ be a Peano space whose maximal cyclic subsets are compact surfaces and $C\subset X$ be an admissible subcontinuum. Then $C$ is a 1-cactoid.
\end{lem}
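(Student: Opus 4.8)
The plan is to verify the separation criterion of Proposition \ref{prop_1cact} for the continuum $C$: I will show that for every pair of conjugate points $x,y\in C$ the set $C\setminus\{x,y\}$ is disconnected. Since a $1$-cactoid is by definition a Peano space whose maximal cyclic subsets are circles, this single verification yields everything at once; in particular no separate argument for local connectedness of $C$ is required.

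First I would transfer the conjugacy from $C$ to $X$. Let $x,y\in C$ be conjugate in $C$ and take any $p\in X\setminus\{x,y\}$. If $p\notin C$, then $C$ is a connected subset of $X\setminus\{p\}$ containing both $x$ and $y$, so $p$ does not separate them. If $p\in C$ and $p$ separated $x$ from $y$ in $X$, then writing $X\setminus\{p\}=A\sqcup B$ with $x\in A$, $y\in B$ would give the separation $C\setminus\{p\}=(C\cap A)\sqcup(C\cap B)$, contradicting that $x,y$ are conjugate in $C$. Hence no point of $X$ separates $x$ and $y$, so they are conjugate in $X$. As $X$ is a Peano space, the first part of Lemma \ref{lem_conj} gives a Jordan curve through $x$ and $y$, and the second part places this curve in a maximal cyclic subset $T$, which is a compact surface by hypothesis. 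Since $x\ne y$, the intersection $T\cap C$ is non-degenerate, so admissibility forces it to be a boundary component $J$ of $T$; in particular $x,y$ lie on the circle $J\subseteq C$.

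Next I would describe $C$ relative to $T$. By the fifth part of Lemma \ref{lem_Peano}, every component $D$ of $X\setminus T$ satisfies $\overline D=D\cup\{t_D\}$ with $t_D\in T$, so $t_D$ is a cut point separating $D$ from $X\setminus\overline D$. If $C$ meets $D$, then—because $C$ is connected and $C\cap T=J$—the point $t_D$ must lie in $C$, hence in $J$. Thus $C=J\cup\bigcup_D(C\cap\overline D)$, each ``blob'' $C\cap\overline D$ being attached to $J$ at the single point $t_D\in J$. Writing $J\setminus\{x,y\}=\alpha\sqcup\beta$ for the two open arcs, I would let $U_\alpha$ (resp.\ $U_\beta$) consist of $\alpha$ (resp.\ $\beta$) together with all blobs whose attaching point lies in $\alpha$ (resp.\ $\beta$), and let $W$ be the union of the blobs attaching at $x$ or $y$. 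These sets are pairwise disjoint, cover $C\setminus\{x,y\}$, and $U_\alpha,U_\beta\ne\emptyset$.

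The main obstacle is showing that this decomposition is a genuine separation, and this is where the metric control from the third part of Lemma \ref{lem_Peano} enters. I would check that $U_\alpha$ is relatively closed in $C\setminus\{x,y\}$: a convergent sequence drawn from $\alpha$ or from blobs over $\alpha$ has its limit in $\overline\alpha=\alpha\cup\{x,y\}$, because along any sequence of pairwise distinct blobs the diameters tend to $0$, forcing the sampled points and their attaching points $t_D\in\alpha$ to a common limit in $\overline\alpha$; as this limit avoids $x,y$ it lies in $\alpha\subseteq U_\alpha$. The same estimate applies to $U_\beta$ and $W$, so all three pieces are relatively clopen, and $U_\alpha$ together with its nonempty complement $U_\beta\cup W$ forms a separation of $C\setminus\{x,y\}$. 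By Proposition \ref{prop_1cact} this shows $C$ is a $1$-cactoid. The only genuinely delicate point is the accumulation of infinitely many shrinking blobs at $x$ or $y$, which the diameter estimate is precisely designed to control.
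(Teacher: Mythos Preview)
Your argument is correct, but the paper reaches the disconnection of $C\setminus\{x,y\}$ by a shorter route. After locating $x,y$ on a boundary circle $b=T\cap C$ of the surface $T$ (exactly as you do), the paper picks an arc $\gamma\subset T$ with $\gamma\cap b=\{x,y\}$ that separates two points of $b$ inside the surface $T$; the third part of Lemma~\ref{lem_conj} then promotes this to a separation in $X$, and since $C\cap\gamma\subset C\cap T=b$ meets $\gamma$ only in $\{x,y\}$, the separation restricts to $C$. This exploits the surface structure of $T$ directly and lets Lemma~\ref{lem_conj} absorb all the global topology of $X$; no analysis of the components of $X\setminus T$ or of shrinking diameters is needed. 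Your approach, by contrast, bypasses Lemma~\ref{lem_conj}(3) entirely and instead reconstructs the separation by hand from the tree-like attachment of $C$ to the circle $J$, using parts (3) and (5) of Lemma~\ref{lem_Peano} to control accumulation of blobs. The trade-off: the paper's proof is a couple of lines once the right separating arc is named, while yours is longer but more self-contained, relying only on the fact that $T\cap C$ is a circle rather than on finding an auxiliary arc through the interior of the surface.
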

\begin{proof}
Let $x,y\in C$ be conjugate. Then the points are also conjugate in $X$. Due to Lemma \ref{lem_conj} the points lie in some maximal cyclic subset $T\subset X$. Since $C$ is admissible, there is some boundary component $b\subset T$ such that $T\cap C=b$. Further we find an arc $\gamma\subset T$ satisfying the following property: The intersection of $\gamma$ with $b$ is given by $\cp{x,y}$ and $\gamma$ separates two points of $b$ in $T$.\\
By Lemma \ref{lem_conj} the arc also separates these points in $X$. It follows that $C\setminus\cp{x,y}$ is disconnected. From Proposition \ref{prop_1cact} we derive that $C$ is a 1-cactoid.
\end{proof}
\noindent
We have the following technical lemma:
\begin{lem}\label{lem_adm_stab}
Let $X$ be a Peano space whose maximal cyclic subsets are compact surfaces. Then the following statements apply:
\begin{itemize}
\item[1)] If $C_1,C_2\subset X$ are admissible subcontinua intersecting exactly once, then $C\coloneqq C_1\cup C_2$ is an admissible subcontinuum. 
\item[2)] Let $\seq{C}{n}$ be a sequence of subcontinua in $X$ which intersect every maximal cyclic subset at most once. If the sequence Hausdorff converges to some $C\subset X$, then $C$ is a subcontinuum which intersects every maximal cyclic subset at most once. \end{itemize}
\end{lem}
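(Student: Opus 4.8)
The plan is to derive both statements from the cut-point structure of a maximal cyclic subset furnished by Lemma \ref{lem_Peano}, namely that every connected component $D$ of $X\setminus T$ has a single attaching point $x_D\in T$ with $\partial D=\cp{x_D}$. For statement 1) write $\cp{p}=C_1\cap C_2$ and $C\coloneqq C_1\cup C_2$; then $C$ is a subcontinuum, being a union of two continua that share the point $p$. Fixing a maximal cyclic subset $T$ and setting $A_i\coloneqq T\cap C_i$, I have $T\cap C=A_1\cup A_2$, where each $A_i$ is empty, a point, or a boundary component of $T$ by admissibility. Only the case where both $A_i$ are non-empty needs work, and since $A_1\cap A_2=T\cap\cp{p}$, if $p\in T$ the two pieces overlap in $p$; using that distinct boundary components of the surface $T$ are disjoint, a short case distinction then shows that $A_1\cup A_2$ is again a single point or a single boundary component.

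The main obstacle in 1) is excluding the case $p\notin T$, in which $A_1$ and $A_2$ would be disjoint and $T\cap C$ disconnected. I would let $D$ be the component of $X\setminus T$ containing $p$ and put $x_0\coloneqq x_D$. A clopen argument inside the continuum $C_i$ shows that if $A_i\not\subseteq\cp{x_0}$, then $C_i$ --- which meets $D$ at $p$ and meets $T\setminus\cp{x_0}$ --- must contain $x_0$. Were this true for both $i$, then $x_0\in C_1\cap C_2=\cp{p}$, forcing the contradiction $x_0=p\notin T$; a symmetric argument on the remaining case forces $A_1=A_2=\cp{x_0}$, against $A_1\cap A_2=\varnothing$. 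Hence $p\notin T$ cannot occur and $C$ is admissible.

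For statement 2) the limit $C$ is a subcontinuum by the standard fact that Hausdorff limits of continua in a compact space are continua. To control $C\cap T$ for a fixed maximal cyclic subset $T$, I would use the retraction $r\colon X\to T$ that is the identity on $T$ and sends each component $D$ of $X\setminus T$ to $x_D$; it is well defined by Lemma \ref{lem_Peano}. The decisive point is that a continuum meeting $T$ in at most one point is collapsed by $r$ to a single point: if $C_n\cap T=\cp{c}$, then a clopen argument shows that every component meeting $C_n$ attaches at $c$, whence $r\p{C_n}=\cp{c}$, and the case $C_n\cap T=\varnothing$ is analogous.

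Assuming for contradiction $a,b\in C\cap T$ with $a\neq b$, I choose $a_n,b_n\in C_n$ with $a_n\to a$ and $b_n\to b$, so that $r\p{a_n}=r\p{b_n}=:y_n$ is a single point of $T$. The remaining and, I expect, most delicate step is the continuity of $r$ at the points of $T$, which yields $y_n=r\p{a_n}\to a$ and $y_n=r\p{b_n}\to b$ and hence the absurdity $a=b$. Continuity at $a\in T$ follows again from Lemma \ref{lem_Peano}: if $a_n\in D_n\subseteq X\setminus T$ then $d\p{a_n,x_{D_n}}\le diam\p{D_n}$, and since only finitely many components exceed any fixed diameter threshold, a subsequence keeping $r\p{a_n}$ away from $a$ would force a single component to recur, whose attaching point would then equal $a$. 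This gives $\left|C\cap T\right|\le 1$ for every maximal cyclic subset, completing the plan.
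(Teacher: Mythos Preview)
Your proposal is correct, and it takes a genuinely different route from the paper's argument.

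For 1), the paper assumes two distinct points $x,y\in T\cap C$ and, after reducing to $x\in C_1\setminus C_2$, $y\in C_2\setminus C_1$, invokes Lemma~\ref{lem_bound_cact} to get arcwise connectedness of the $C_i$. It then builds an arc $\gamma\subset C$ from $x$ to $y$ passing through the unique intersection point, and uses Lemma~\ref{lem_Peano} (the single-attaching-point property) to force $\gamma\subset T$, which pushes both $C_i$ into the same boundary component of $T$, contradicting $\left|C_1\cap C_2\right|=1$. Your argument bypasses arcwise connectedness entirely: you run a direct clopen argument on the component $D$ of $X\setminus T$ containing $p$, forcing $x_D\in C_1\cap C_2$ whenever $A_1,A_2$ are both ``large''. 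This is lighter in that it does not appeal to Lemma~\ref{lem_bound_cact}.

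For 2), the paper re-metrizes $X$ as a geodesic space (Proposition~\ref{prop_Peano_length}), joins the approximating points $x_{i,n}\in C_n$ to $x_i\in T$ by short geodesics, splices in an arc in $C_n$ (again via Lemma~\ref{lem_bound_cact}), and obtains an arc between $x_1,x_2$ which must lie in $T$ by Lemma~\ref{lem_Peano}, contradicting $\left|C_n\cap T\right|\le 1$. Your retraction $r\colon X\to T$ sending each component of $X\setminus T$ to its attaching point is a clean substitute: the collapse $r\p{C_n}=\cp{y_n}$ follows from the same clopen mechanism as in 1), and continuity of $r$ at points of $T$ is exactly Lemma~\ref{lem_Peano} parts 3) and 5). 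This avoids both the geodesic re-metrization and the arcwise-connectedness input, at the cost of the small extra argument for continuity of $r$.

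In short, the paper's proof is arc-based and leans on Lemma~\ref{lem_bound_cact} and Proposition~\ref{prop_Peano_length}; yours is purely point-set, trading those for a retraction and the diameter-decay of components. Both are sound; yours has slightly fewer external dependencies within the paper.
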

\begin{proof}
1) It directly follows that $C$ is a continuum. Let $T\subset X$ be a maximal cyclic subset and $x,y\in T\cap C$ be distinct points. Then we may assume that $x\in C_1\setminus C_2$.\\
For the sake of contradiction we further assume that $y\in C_2\setminus C_1$. By Lemma \ref{lem_bound_cact} the subsets $C_1$ and $C_2$ are arcwise connected. Hence there is an arc $\gamma\subset C$ connecting $x$ and $y$ which is the union of non-degenerate arcs $\gamma_1\subset C_1$ and $\gamma_2\subset C_2$. From Lemma \ref{lem_Peano} we get $\gamma\subset T$. Because $C_1$ and $C_2$ are admissible, there are boundary component $b_1,b_2\subset T$ with $T\cap C_1=b_1$ and $T\cap C_2=b_2$. Since $\gamma$ is connected, we get $b_1=b_2$. This contradicts the fact that $\left|C_1\cap C_2\right|=1$.\\
We conclude that $T\cap C=T\cap C_1$. Due to the fact that $C_1$ is admissible, it follows that $T\cap C$ is a boundary component of $T$. Therefore we derive that $C$ is admissible.\\
2) We directly get that $C$ is a continuum. For the sake of contradiction we assume the existence of a maximal cyclic subset $T\subset X$ and distinct points $x_1,x_2\in T\cap C$. Then there is a sequence $\p{x_{i,n}}_{n\inN}$ converging to $x_i$ such that $x_{i,n}\in C_n$ and $x_{1,n}\neq x_{2,n}$. By Proposition \ref{prop_Peano_length} we may assume that $X$ is geodesic and we find a geodesic $\gamma_{i,n}\subset X$ connecting $x_{i,n}$ and $x_i$. Since $x_1\neq x_2$, we may assume that the geodesics do not intersect. From Lemma \ref{lem_bound_cact} we derive that $C_n$ is arcwise connected. Hence there is a non-degenerate arc $\alpha_n\subset C_n$ connecting $x_{1,n}$ and $x_{2,n}$. Due to the fact that $\gamma_{1,n}$ and $\gamma_{2,n}$ do not intersect, we may assume that $\alpha_n$ intersects $\gamma_{1,n}\cup\gamma_{2,n}$ only twice. Then $\gamma_n\coloneqq \gamma_{1,n}\cup\alpha_n\cup\gamma_{2,n}$ is an arc and Lemma \ref{lem_Peano} yields $\gamma_n\subset T$. This contradicts the fact that $C_n$ intersects every maximal cyclic subset at most once.
\end{proof}
\noindent
Next we introduce some definitions: Let $X$ be a Peano space whose maximal cyclic subsets are compact surfaces. Further let $C_1,\ldots,C_n\subset X$ be disjoint admissible subcontinua such that the boundary components of the maximal cyclic subsets are covered by the subcontinua. If every $C_i$ contains a boundary component of some maximal cyclic subset, then we denote $\cup_{i=1}^{n} C_i$ as a \emph{pre-boundary} of $X$. Provided the number $n$ is minimal among all pre-boundaries of $X$, we say that the pre-boundary is \emph{minimal}.\\
We note that the second property of Definition \ref{def_gen} can be restated as the existence of a pre-boundary. The following example illustrates this property:
\begin{expl}
We consider a subset $X\subset\mathbb{R}^3$ which is the union of
the 2-sphere and disjoint subsets $\seq{D}{n}$ being homeomorphic to the 2-disc. Further we assume that $\conv{diam\p{D_n}}{0}$ and that $D_n$ intersects the 2-sphere exactly once for every $n\inN$. Then $X$ is a Peano space whose maximal cyclic subsets are homeomorphic to the 2-sphere or the 2-disc. Hence the first property of Definition \ref{def_gen} is satisfied. But $X$ is not a generalized cactoid since there is no pre-boundary in $X$.\\ 
For our second example we replace the 2-sphere in the construction above with a further subset $D\subset \mathbb{R}^3$ being homeomorphic to the 2-disc. We denote this new subset by $Y$. Then $Y$ is a Peano space whose maximal cyclic subsets are homeomorphic to the 2-disc. Hence the first property of Definition \ref{def_gen} is again satisfied. Moreover $Y$ is a generalized cactoid if and only if $\partial D_n$ intersects $\partial D$ for almost all $n\inN$.
\end{expl}
\noindent
Finally we show that the boundary of a generalized cactoid is well-defined:  
\begin{lem}\label{lem_boundary}
Let $X$ be a generalized cactoid. Then the union of all minimal pre-boundaries is a minimal pre-boundary.
\end{lem}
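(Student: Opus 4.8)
The plan is to reduce everything to a single strengthening of Lemma \ref{lem_adm_stab} and an invariant partition of the boundary circles. First I would prove the key technical step: \emph{if $C,C'$ are admissible subcontinua with $C\cap C'\neq\emptyset$, then $C\cup C'$ is admissible}. Fix a maximal cyclic subset $T$. By Lemma \ref{lem_bound_cact} the sets $C,C'$ are $1$-cactoids, hence arcwise connected, so $C\cup C'$ is arcwise connected. The decisive input is the fifth statement of Lemma \ref{lem_Peano}: every component $K$ of $X\setminus T$ attaches to $T$ at a single point, so any arc in $X$ can pass between two points of $T$ only through $T$ itself. If $(C\cup C')\cap T=(C\cap T)\cup(C'\cap T)$ were neither a point nor a single boundary circle, it would split into two pieces lying in different ``$b$-sides'' of $T$ (two distinct circles, a circle and an outside point, or two points); an arc in $C\cup C'$ joining a point of one piece to a point of the other would have to leave and re-enter $T$ only at single attachment points, which lie in $C\cap T$ or $C'\cap T$ and hence cannot bridge the two pieces. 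This contradiction gives the claim, and by chaining it along a finite intersection graph, every connected component of a finite union of admissible subcontinua is admissible.

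Next I would organise the boundary circles. Declare $b\approx b'$ for boundary circles $b,b'$ if some admissible subcontinuum contains both. Reflexivity and symmetry are immediate, and transitivity follows from the strengthened lemma, since two admissible subcontinua sharing a circle have admissible union; thus $\approx$ partitions the boundary circles into classes $\beta_1,\dots,\beta_n$. I would then show that in \emph{every} minimal pre-boundary the components correspond bijectively to these classes, each containing exactly the circles of one class. One direction is clear: a component is admissible, so any two circles in it are $\approx$-equivalent. For the other direction, if $b\approx b'$ via an admissible $C$ but $b,b'$ lay in distinct components $C_i\neq C_{i'}$ of a minimal pre-boundary $P$, then $C$ bridges $C_i$ and $C_{i'}$, and the components of the finite union $\bigcup P\cup C$ form a pre-boundary with fewer than $n$ pieces (they are admissible by the strengthened lemma, cover all circles, and each contains an original component), contradicting minimality. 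In particular the number of classes $n$ equals the common number of components of all minimal pre-boundaries.

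For each class $\beta_i$ let $C^P_{\beta_i}$ denote the unique component of a minimal pre-boundary $P$ containing $\beta_i$, and set $D_i\coloneqq\bigcup_P C^P_{\beta_i}$, the union taken over all minimal pre-boundaries; then the union of all minimal pre-boundaries is $\bigcup_{i=1}^n D_i$. Each $D_i$ is connected because every $C^P_{\beta_i}$ contains the nonempty set $\beta_i$, so I pass to $\overline{D_i}$, which is automatically a subcontinuum, and the crux is to prove $\overline{D_i}$ admissible. For a surface $T$ two cases arise: if $\beta_i$ meets $T$ in a circle $b$ then $C^P_{\beta_i}\cap T=b$ for all $P$ and so $D_i\cap T=b$; otherwise each $C^P_{\beta_i}\cap T$ is a single point, and these points must coincide, since two of them sharing $\beta_i$ would otherwise violate the strengthened lemma, so $D_i\cap T$ is a single point. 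The main obstacle I anticipate is controlling the limit points of $D_i$ on $T$, and I would resolve it with the third and fifth statements of Lemma \ref{lem_Peano}: a sequence in $D_i$ converging to a point of $T$ either eventually lies in one complementary component of $X\setminus T$, whose attachment point lies in $D_i\cap T$, or runs through infinitely many distinct such components, whose diameters tend to zero; in both cases the limit lands in $D_i\cap T$. Hence $\overline{D_i}\cap T=D_i\cap T$ is a point or a circle, so $\overline{D_i}$ is admissible.

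Finally I would assemble the pieces. The sets $\overline{D_i}$ are pairwise disjoint, for a common point of $\overline{D_i}$ and $\overline{D_j}$ would make $\overline{D_i}\cup\overline{D_j}$ an admissible subcontinuum containing $\beta_i\cup\beta_j$, forcing $\beta_i=\beta_j$ by definition of $\approx$. Thus $\overline{D_1},\dots,\overline{D_n}$ are disjoint admissible subcontinua, each containing a circle, whose union covers every boundary circle, so $\bigcup_{i=1}^n\overline{D_i}$ is a pre-boundary with $n$ components; as $n$ is minimal, it is a minimal pre-boundary. But then each $\overline{D_i}$ is exactly the class-$\beta_i$ component of this minimal pre-boundary, hence one of the sets defining $D_i$, giving $\overline{D_i}\subseteq D_i$. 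Therefore $\bigcup_{i=1}^n D_i=\bigcup_{i=1}^n\overline{D_i}$ is a minimal pre-boundary, and it coincides with the union of all minimal pre-boundaries, as required.
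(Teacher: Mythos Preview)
Your plan is sound and leads to a correct proof, but it follows a genuinely different route from the paper's argument.

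The paper fixes one minimal pre-boundary $P$ and enlarges it to $P_0$ by adjoining every subcontinuum that meets $P$ in exactly one point and meets every maximal cyclic subset at most once. Closedness of $P_0$ is obtained analytically, via the second part of Lemma~\ref{lem_adm_stab}: a convergent sequence of such subcontinua Hausdorff-converges to another one, and one reconnects the limit to $P$ by an arc. Admissibility of the components of $P_0$ then follows from the first part of Lemma~\ref{lem_adm_stab}, and finally the paper checks that \emph{every} pre-boundary is contained in $P_0$. Your approach is more structural: you first upgrade Lemma~\ref{lem_adm_stab} from ``intersecting in exactly one point'' to ``intersecting nontrivially'', then extract an intrinsic equivalence relation on boundary circles and show that the components of every minimal pre-boundary are indexed by its classes. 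Closedness is handled pointwise using parts~3) and~5) of Lemma~\ref{lem_Peano} rather than Hausdorff limits. What you gain is a transparent description of the components of $\partial X$ as $\approx$-classes; what the paper gains is brevity and the slightly stronger conclusion that $P_0$ absorbs all pre-boundaries, not only the minimal ones.

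Two points in your plan need tightening. First, in the ``otherwise'' branch you assert that each $C^P_{\beta_i}\cap T$ is a single point, but it may well be empty for some (or all) $P$; the conclusion survives, since if every $C^P_{\beta_i}$ misses $T$ then all of them, containing $\beta_i$, lie in the single component $K^*$ of $X\setminus T$ determined by $\beta_i$, whence $\overline{D_i}\cap T\subset\partial K^*$ is at most one point. Second, in the closure step the claim that the attachment point of the relevant component $K$ lies in $D_i\cap T$ is not automatic from $x_n\in D_i\cap K$: if the particular $C^{P_n}_{\beta_i}$ containing $x_n$ happens to miss $T$, you cannot read off the attachment point from it. You must instead use that \emph{some} $C^{P^*}_{\beta_i}$ hits $T$ and, since it also contains $\beta_i\subset K$, an arc in $C^{P^*}_{\beta_i}$ from a point of $\beta_i$ to $p_T$ forces the attachment point of $K$ to equal $p_T$. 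The common anchoring of all $C^P_{\beta_i}$ through $\beta_i$ is precisely what makes this work and deserves an explicit sentence.
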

\begin{proof}
Let $P\subset X$ be a minimal pre-boundary. We denote the set of all subcontinua in $X$ which intersect every maximal cyclic subset at most once by $\mathcal{T}$. Further we define $P_0$ as the union of $P$ with all subcontinua in $\mathcal{T}$ which intersect $P$ exactly once.\\
We show that $P_0$ is a minimal pre-boundary: Since $P$ covers the boundary components of the maximal cyclic subsets, the same applies to $P_0$. Moreover $P_0$ has at most as many connected components as $P$.\\ We prove that every connected component $C\subset P_0$ is compact: Let $\seq{x}{n}$ be a sequence in $P_0$ and $x\in X$ with $\conv{x_n}{x}$. By construction there is a subcontinuum $A_n\in\mathcal{T}$ which contains $x_n$ and intersects $P$ exactly once. After passing to a subsequence, we may assume that the subcontinua $\seq{A}{n}$ Hausdorff converge to some subcontinuum $A\subset X$.\\
Since $P$ is compact, the intersection of $A$ and $P$ is non-empty. From Lemma \ref{lem_adm_stab} it follows that $A$ intersects every maximal cyclic subset at most once. In particular $A$ is admissible and hence arcwise connected by Lemma \ref{lem_bound_cact}. Therefore we find an arc $\gamma\subset A$ connecting $x$ and $P$ which intersects $P$ exactly once. Especially we have $\gamma\in\mathcal{T}$ and we deduce $\gamma\subset P_0$. This yields $x\in P_0$.\\
We conclude that $P_0$ is closed. Because $X$ is compact, we get that $P_0$ is compact. Hence $C$ is also compact.\\ 
Due to the fact that the subcontinua in $\mathcal{T}$ and the connected components of $P$ are admissible, Lemma \ref{lem_adm_stab} yields that $C$ is admissible. We conclude that $P_0$ is a minimal pre-boundary.\\ 
Next we show that every pre-boundary $A\subset X$ lies in $P_0$: For the sake of contradiction we assume that $A$ is not a subset of $P_0$. Then we find some $p\in A\setminus P_0$. We denote the connected component of $A$ containing $p$ by $C$. The subset $C$ contains a boundary component of some maximal cyclic subset. Moreover $P$ covers the boundary components of the maximal cyclic subsets. Therefore $C$ intersects $P$. Since $C$ is admissible, it is arcwise connected. Hence there is an arc $\gamma\subset C$ starting in $p$ whose endpoint $e$ is the only intersection point of $\gamma$ with $P$.\\
We note that $\gamma\setminus\cp{e}$ does not intersect boundary components of maximal cyclic subsets. Due to the fact that $C$ is admissible, we have $\gamma\in\mathcal{T}$. Finally we deduce $\gamma\subset P_0$ and therefore $p\in P_0$. A contradiction.
\end{proof}
\noindent
We remark that the boundary components of a generalized cactoid are 1-cactoids by Lemma \ref{lem_bound_cact}.
\subsection{Curves in Compact Surfaces}
This subsection is devoted to the classification of curves in compact surfaces.\\
Let $\gamma$ be an arc in a compact surface $S$. Then $\gamma$ is called \emph{simple} if its endpoints lie on boundary components and the interior of $\gamma$ does not. Moreover we denote $\gamma$ as \emph{separating} provided $S\setminus \gamma$ is disconnected.\\
The next two results yield a classification of simple arcs in compact surfaces:  
\begin{prop}\comp{pp. 54-55}{MST16}\label{prop_clas_arc}
Let $S$ be a compact surface of connectivity number $c$. Further let $\gamma\subset S$ be a separating  simple arc which does not form a contractible Jordan curve together with a subarc of some boundary component.\\
Then there are $c_1,c_2\in\mathbb{N}_{\ge 2}$ with $c_1+c_2=c+1$ and a compact surface $S_i$ of connectivity number $c_i$ such that the topological quotient $S/\gamma$ is a wedge sum of $S_1$ and $S_2$. Moreover the wedge point lies in $\partial S_1\cap\partial S_2$.\\
At least one of the surfaces is non-orientable if and only if $S$ is non-orientable.
\end{prop}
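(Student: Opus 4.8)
The plan is to cut $S$ along $\gamma$, analyse the two resulting pieces, and track the connectivity number and orientability under cutting and collapsing. Writing $p,q$ for the endpoints of $\gamma$, I would first observe that they lie on the same boundary component: near an endpoint $S$ looks like a half-plane with $\partial S$ as its boundary line and $\gamma$ emanating into the interior, so the two local sides of $\partial S$ at that point belong to different components of $S\setminus\gamma$; if a boundary component contained only one endpoint it would be a single connected arc meeting both components, a contradiction. Thus $p,q$ lie on one boundary component $b$, which $\cp{p,q}$ splits into two arcs. Since $\gamma$ is separating, $S\setminus\gamma$ has exactly two components $U_1,U_2$ with $\overline{U_1}\cap\overline{U_2}=\gamma$, and the standard cut-along-an-arc picture shows each $\overline{U_i}$ is a compact surface carrying $\gamma$ as a boundary arc. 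Collapsing this boundary arc gives a compact surface $S_i\coloneqq\overline{U_i}/\gamma$ homeomorphic to $\overline{U_i}$ with the collapse point on $\partial S_i$; since the two pieces meet only along $\gamma$, collapsing $\gamma$ in $S$ joins $S_1$ and $S_2$ at that single point, so $S/\gamma=S_1\vee S_2$ with wedge point in $\partial S_1\cap\partial S_2$.

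For the numerology I would use additivity of the Euler characteristic, $\chi(S)=\chi(\overline{U_1})+\chi(\overline{U_2})-\chi(\gamma)=\chi(\overline{U_1})+\chi(\overline{U_2})-1$; since collapsing the arc leaves the Euler characteristic unchanged, $\chi(S_i)=\chi(\overline{U_i})$, and rewriting in terms of $c=2-\chi(S)$ and $c_i=2-\chi(S_i)$ gives $c_1+c_2=c+1$. The orientability statement I would get from the gluing $S=\overline{U_1}\cup_\gamma\overline{U_2}$ along the single arc $\gamma$: an orientation of $S$ restricts to both pieces, and conversely orientations of the two pieces can always be matched across one arc (reversing one piece if necessary), so $S$ is orientable exactly when both $S_i$ are. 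Equivalently, $S$ is non-orientable if and only if at least one of $S_1,S_2$ is non-orientable.

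The step I expect to be the crux — and the only place the hypothesis on $\gamma$ is used — is the lower bound $c_i\ge 2$, i.e. $\chi(S_i)\le 0$. As $\overline{U_i}$ has nonempty boundary it is not closed, which already excludes the sphere and the projective plane, the only surfaces of positive Euler characteristic other than the disc; so it remains to rule out that some $\overline{U_i}$ is a disc. If it were, its boundary would be a single Jordan curve $\gamma\cup a_i$, where $a_i\subset\partial S$ runs from $q$ to $p$; being a connected subset of the disjoint union of boundary circles, $a_i$ is a subarc of one boundary component, and $\gamma\cup a_i$ bounds the disc $\overline{U_i}$ and is therefore contractible — exactly the configuration the hypothesis forbids. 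The care required here is to verify that a hypothetical disc piece really has boundary of the excluded form $\gamma\cup(\text{boundary subarc})$, rather than one involving several boundary components of $S$; once this is settled, neither piece is a disc, $c_i\ge 2$ follows, and together with the relations above this completes the proof.
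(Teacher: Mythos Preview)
The paper does not give its own proof of this proposition: it is stated with a citation to \cite[pp.~54--55]{MST16} and used as a black box. So there is no ``paper's proof'' to compare against; I can only comment on the correctness of your argument.

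Your argument is sound. The observation that a \emph{separating} simple arc must have both endpoints on the same boundary component is exactly right and is what makes the rest work: with $p,q\in b$, the circle $b$ is cut into two nonempty arcs $a_1,a_2$, and the boundary circle of $\overline{U_i}$ containing $\gamma$ is precisely $\gamma\cup a_i$. This resolves the concern you flag in your last paragraph: if $\overline{U_i}$ were a disc it would have a \emph{single} boundary circle, which is then forced to be $\gamma\cup a_i$ with $a_i$ a subarc of $b\subset\partial S$, i.e.\ exactly the excluded configuration. No extra boundary components can appear in a disc. The Euler-characteristic bookkeeping $c_1+c_2=c+1$, the homeomorphism $S_i\cong\overline{U_i}$ obtained by collapsing a proper boundary subarc, and the orientability equivalence via matching orientations across a single arc are all standard and correctly stated. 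One small point worth making explicit is that each $a_i$ is nonempty (since $p\neq q$ on the circle $b$), so $\gamma$ is a \emph{proper} subarc of $\partial\overline{U_i}$ and the collapse really does yield a homeomorphic surface with the wedge point on its boundary.
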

\begin{prop}\comp{pp. 54-55}{MST16}\label{prop_clas_arc_II}
Let $S$ be a compact surface of connectivity number $c$ and $\gamma\subset S$ be a non-separating simple arc.\\
Then there is a compact surface $S_1$ of connectivity number $c-1$ such that $S/\gamma$ is a topological 2-point identification of $S_1$. Moreover the glued points lie in $\partial S_1$.\\
If $S$ is orientable, then $S_1$ is orientable. 
\end{prop}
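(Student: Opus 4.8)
The plan is to realize $S/\gamma$ through the operation of cutting $S$ along $\gamma$. Let $S_1$ denote the compact surface obtained by cutting $S$ along $\gamma$, and let $\pi\colon S_1\to S$ be the associated cutting map. Since the interior of $\gamma$ lies in the interior of $S$ while its two endpoints lie on the boundary, the preimage $\pi^{-1}(\gamma)$ consists of two disjoint arcs $\gamma'$ and $\gamma''$ contained in $\partial S_1$: the two sides of $\gamma$ become distinct boundary arcs, and each endpoint of $\gamma$ splits into two boundary points. Because $\gamma$ is non-separating, the surface $S_1$ is connected. Choosing a triangulation in which $\gamma$ is a single edge with its two endpoints as vertices, the cut duplicates this edge and both of its vertices, so $\chi(S_1)=\chi(S)+1$, and hence the connectivity number of $S_1$ equals $c-1$.

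Next I would identify $S/\gamma$ as a quotient of $S_1$. The map $\pi$ reglues $\gamma'$ to $\gamma''$, so that $S=S_1/(\gamma'\sim\gamma'')$, and since $\pi^{-1}(\gamma)=\gamma'\cup\gamma''$, collapsing $\gamma$ in $S$ is the same as collapsing $\gamma'\cup\gamma''$ to a single point in $S_1$. I would factor this collapse into three steps: first collapse the boundary arc $\gamma'$ to a point $a'$, then collapse the boundary arc $\gamma''$ to a point $a''$, and finally identify $a'$ with $a''$. The first two steps each collapse an arc lying entirely in the boundary of a compact surface, and such a collapse yields a surface homeomorphic to the original, with $a'$ and $a''$ lying on its boundary. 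Therefore $S/\gamma$ is obtained from a surface homeomorphic to $S_1$ by identifying the two boundary points $a'$ and $a''$, which is precisely a topological 2-point identification whose glued points lie in the boundary.

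For the orientability statement, the interior of $S_1$ is homeomorphic to $S\setminus\gamma$, which is an open subset of $S$; if $S$ is orientable then so is this subset, and since a surface is orientable if and only if its interior is, it follows that $S_1$ is orientable (the subsequent boundary-arc collapses are homeomorphisms and preserve orientability). The step requiring the most care is the claim that collapsing an arc contained in the boundary of a compact surface produces a homeomorphic surface. I would establish this using the collar neighborhood theorem: a boundary arc admits a half-disc neighborhood in which the collapse is modeled by collapsing a boundary segment of a half-disc to a point, an operation readily seen to yield a half-disc again, so the collapse is supported inside a collar and the global surface type is unchanged. Making the cutting construction itself precise—in particular verifying that each boundary endpoint of $\gamma$ splits into exactly two points—is the other point demanding attention, and it likewise follows from the standard local model near a boundary point.
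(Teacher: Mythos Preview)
The paper does not supply its own proof of this proposition: it is stated with a citation to \cite[pp.~54--55]{MST16} and used as a black box in the subsequent arguments. There is therefore nothing in the paper to compare your argument against.

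That said, your outline is the standard cut-and-collapse argument and is essentially correct. The Euler characteristic count is right (duplicating one edge and two vertices raises $\chi$ by~$1$), the factorization of the quotient $S/\gamma$ through $S_1$ is the right idea, and the orientability step via the interior is clean. Two points worth tightening: first, you should note explicitly that $\gamma'$ and $\gamma''$ are \emph{disjoint} in $S_1$ (each endpoint of $\gamma$ splits into two distinct boundary points, one on each copy), so that after collapsing each to a point the images $a'$ and $a''$ are genuinely distinct --- otherwise the final step is not a 2-point identification. Second, when you collapse $\gamma'$ and then $\gamma''$, the second collapse takes place on a surface that is only homeomorphic to $S_1$, so strictly speaking you collapse the image of $\gamma''$ under the first homeomorphism; this image is still a boundary arc because $\gamma'\cap\gamma''=\varnothing$, so the argument goes through, but it is worth saying. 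The collar-neighborhood justification you sketch for ``collapsing a boundary arc is a homeomorphism'' is adequate.
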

\noindent
We say that a Jordan curve $J\subset S$ is \emph{simple} provided we have $\left|J\cap\partial S\right|\le 1$.\\ 
There is also a classification of non-contractible simple Jordan curves in compact surfaces:
\begin{prop}\comp{pp. 54-55}{MST16}\label{prop_clas_Jord}
Let $S$ be a compact surface of connectivity number $c$ and $J\subset S$ be a non-contractible simple Jordan curve. Then the topological quotient $X\coloneqq S/J$ can be described in one of the following ways: 
\begin{itemize}
\item[1)] There are $c_1,c_2\in\mathbb{N}$ with $c_1+c_2=c$ and a compact surface $S_i$ of connectivity number $c_i$ such that $X$ is a wedge sum of $S_1$ and $S_2$. Moreover at least one of the surfaces is non-orientable if and only if $S$ is non-orientable. 
\item[2)] There is a compact surface of connectivity number $c-2$ such that $X$ is a topological 2-point identification of it. Moreover the surface is orientable if $S$ is orientable.
\item[3)] $X$ is a compact surface of connectivity number $c-1$ and $S$ is non-orientable. 
\end{itemize}
\end{prop}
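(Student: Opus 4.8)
The plan is to analyze the quotient $S/J$ through a regular neighborhood of $J$ together with the classification of compact surfaces, keeping track of the Euler characteristic (equivalently the connectivity number $2-\chi$) and of orientability at each step. First I would dispose of the case $|J\cap\partial S|=1$ by an isotopy: a Jordan curve meeting $\partial S$ in a single point cannot cross the boundary there, so it is tangent to it and can be pushed into the interior of $S$, yielding an ambiently isotopic curve $J'$ with $J'\cap\partial S=\emptyset$ of the same type (one-/two-sided, separating or not) and still non-contractible. Since an ambient isotopy carrying $J$ to $J'$ induces a self-homeomorphism of $S$, we get $S/J\cong S/J'$, and it suffices to treat curves lying in the interior.

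For an interior curve $J$, let $N$ be a closed regular neighborhood of $J$ in $S$. By standard surface theory $N$ is an annulus when $J$ is two-sided and a M\"obius band when $J$ is one-sided; the latter forces $S$ to be non-orientable. The central observation is that collapsing $J$ commutes with this decomposition: $S/J$ is homeomorphic to $(S\setminus\mathring N)\cup_{\partial N}(N/J)$, so everything reduces to identifying the local model $N/J$ and how it is glued. When $N$ is a M\"obius band, writing it as the mapping cylinder of the connected double cover $\partial N\to J$ shows that $N/J$ is the cone on $\partial N\cong S^1$, hence a disc; moreover $S\setminus\mathring N$ is connected (its boundary contains the single connected circle $\partial N$, and $S$ is connected). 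Thus $S/J$ is the surface obtained by capping the free boundary circle of $S\setminus\mathring N$, a compact surface with $\chi(S/J)=\chi(S)+1$ and connectivity number $c-1$; this is case 3). When $N$ is an annulus, $N/J$ is the annulus with its core collapsed, i.e.\ a wedge of two discs glued at the image of $J$, so that $S/J$ is obtained by capping both new boundary circles of $S\setminus\mathring N$ and identifying the two cap centres.

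The two-sided case then splits according to whether $J$ separates. If $J$ is non-separating, $S\setminus\mathring N$ is connected, the capping produces a single compact surface $S_1$ with $\chi(S_1)=\chi(S)+2$ and connectivity number $c-2$, and $S/J$ is the $2$-point identification of $S_1$ at the two cap centres; since cutting and capping preserve orientability, $S_1$ is orientable when $S$ is, giving case 2). If $J$ separates, $S\setminus\mathring N$ has two components; capping each yields compact surfaces $S_1,S_2$, and $S/J$ is their wedge $S_1\vee S_2$ at the common image of $J$, which is case 1). The count $\chi(S_1)+\chi(S_2)=\chi(S)+2$ gives $c_1+c_2=c$, and since $S$ is orientable precisely when both pieces are (orientations add over a separating circle and survive capping), at least one $S_i$ is non-orientable iff $S$ is.

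The part I expect to be the genuine obstacle, beyond the bookkeeping, is twofold. First, one must verify rigorously that the collapse commutes with the neighborhood decomposition and that $N/J$ is \emph{exactly} a disc (M\"obius case) or a wedge of two discs (annulus case), not merely a space of the expected Euler characteristic; this is where the homeomorphism types must be pinned down, and where the boundary-touching reduction above should also be checked with care. Second, and crucially in case 1), one must ensure $c_1,c_2\ge 1$, i.e.\ that neither $S_i$ is a sphere: a sphere would mean the corresponding side of $J$ arose by capping a disc, forcing $J$ to bound a disc in $S$ and hence to be contractible. It is exactly here that the non-contractibility hypothesis enters, so I would make that implication explicit to conclude $c_1,c_2\in\mathbb{N}$.
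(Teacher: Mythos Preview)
The paper does not supply its own proof of this proposition; it is stated as a known classification result with a reference to \cite[pp.~54--55]{MST16}. Your argument via a regular neighborhood of $J$---reducing first to an interior curve by an ambient isotopy when $|J\cap\partial S|=1$, then splitting into the one-sided (M\"obius) and two-sided (annulus) cases, and finally into separating versus non-separating---is precisely the standard route to this classification and is correct as written.

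The two points you flag as needing care are exactly the right ones. The identification of $N/J$ with a disc (M\"obius case) or a wedge of two discs (annulus case) is what makes the quotient an honest surface, wedge of surfaces, or $2$-point identification of a surface rather than something singular; your mapping-cylinder description of the M\"obius band handles this cleanly. And the exclusion of a sphere summand in case~1) is indeed where the non-contractibility hypothesis is used, via the fact that on a surface a simple closed curve is null-homotopic if and only if it bounds an embedded disc; your argument that $S_i\cong S^2$ would force the corresponding piece of $S\setminus\mathring N$ to be a disc bounded by a curve parallel to $J$ is the correct way to close this.
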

\subsection{Fundamental Group Formulas}
We provide two fundamental group formulas. First we consider locally simply connected Peano spaces. In a previous work the author showed the following result:
\begin{prop}\comp{p. 10}{Dot24}\label{prop_funda_form}
Let $X$ be a locally simply connected Peano space and $\seq{T}{n}$ be an enumeration of its maximal cyclic subsets. Then $\pi_1(X)$ is isomorphic to $\pi_1\p{T_1}\ast\ldots\ast\pi_1\p{T_n}$ for almost all $n\inN$.     
\end{prop}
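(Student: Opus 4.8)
The plan is to reduce the assertion to a statement about finitely many homotopically nontrivial cyclic elements and then identify $\pi_1\p{X}$ with their free product by van Kampen along the cut-point structure. Whyburn's cyclic element theory provides, for each maximal cyclic subset $T_n$, a monotone retraction $r_n\colon X\to T_n$ obtained by collapsing every connected component of $X\setminus T_n$ to its unique attaching point, which exists by Lemma \ref{lem_Peano}. Consequently the inclusion $T_n\hookrightarrow X$ admits a retraction, so $\pi_1\p{T_n}\to\pi_1\p{X}$ is injective. Since $X$ is compact and locally simply connected, a Lebesgue-number argument over a cover by simply connected open sets yields a uniform $\delta>0$ such that every loop of diameter less than $\delta$ is null-homotopic in $X$. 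Because $\conv{diam\p{T_n}}{0}$ by Lemma \ref{lem_Peano}, for large $n$ every loop in $T_n$ has diameter below $\delta$ and is therefore contractible in $X$; by the injectivity just noted it is already contractible in $T_n$, so $\pi_1\p{T_n}$ is trivial. Hence only finitely many $T_n$ have nontrivial fundamental group, and there is $N_0\inN$ so that for every $n\ge N_0$ the initial segment $T_1,\ldots,T_n$ contains all of them and $\pi_1\p{T_1}\ast\ldots\ast\pi_1\p{T_n}$ is independent of $n$. It thus suffices to prove that this stable free product is isomorphic to $\pi_1\p{X}$.

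For the core step I would exploit the tree-like organization of the cyclic elements. The maximal cyclic subsets are joined through cut points, and by Lemma \ref{lem_Peano} every complementary component is attached at a single point; collapsing the dendritic cut-point skeleton that links the finitely many nontrivial subsets together with all remaining (homotopically trivial) cyclic elements should exhibit $X$ as homotopy equivalent to the wedge $T_1\vee\ldots\vee T_{N_0}$. The conditions $\conv{diam\p{C_n}}{0}$ for complementary components and $\conv{diam\p{T_n}}{0}$ for cyclic subsets, both from Lemma \ref{lem_Peano}, are exactly what make the collapsing map continuous in spite of the infinitude of small pieces, while local simple connectivity guarantees that each collapsed piece is itself null-homotopic, so the collapse is a genuine homotopy equivalence and not merely a quotient. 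I expect this structural homotopy equivalence to be the main obstacle: one must globalize the collapsing of infinitely many complementary components and trivial cyclic elements into a single continuous deformation retraction, and the entire control needed for this limiting argument has to be drawn from the vanishing of diameters together with the uniform null-homotopy of small loops.

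Granting this, I would extract the free product by van Kampen over the wedge. The wedge points are cut points of $X$, and local simple connectivity supplies arbitrarily small simply connected neighborhoods of them; thickening each $T_i$ to an open set $U_i$ that deformation retracts onto $T_i$ and whose pairwise overlaps (through the wedge point) are simply connected, the theorem yields $\pi_1\p{X}\cong\pi_1\p{T_1}\ast\ldots\ast\pi_1\p{T_{N_0}}$, with injectivity automatic from the pushout description. Combined with the first paragraph, where the trivial factors contribute nothing, this gives $\pi_1\p{X}\cong\pi_1\p{T_1}\ast\ldots\ast\pi_1\p{T_n}$ for all $n\ge N_0$, that is, for almost all $n\inN$.
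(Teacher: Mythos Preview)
The paper does not supply its own proof of this proposition; it is quoted with the citation \comp{p.~10}{Dot24} and used as an imported result from the author's earlier work. There is therefore no in-paper argument to compare your proposal against.

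On the substance of your proposal: the first paragraph is solid. The retraction $r_n\colon X\to T_n$ exists exactly as you describe (Lemma~\ref{lem_Peano}(5) gives the unique attaching point of each complementary component), so $\pi_1\p{T_n}\to\pi_1\p{X}$ is injective; the Lebesgue-number argument then correctly shows that only finitely many $T_n$ carry nontrivial fundamental group.

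The second paragraph is where you yourself flag the gap, and it is a real one. Producing a continuous deformation retraction of $X$ onto a finite wedge by simultaneously collapsing infinitely many small pieces is delicate: the vanishing of diameters gives continuity of the quotient \emph{map}, but does not by itself furnish a homotopy inverse, and ``each collapsed piece is null-homotopic'' is not the same as ``the collapse is a homotopy equivalence''. One honest route is to bypass the homotopy equivalence entirely and work at the $\pi_1$ level: using the cut-point tree structure, show that every loop in $X$ is homotopic (rel basepoint) to a finite concatenation of loops each contained in a single $T_i$, and that any null-homotopy of such a product already factors through the individual $T_i$'s; local simple connectivity at the cut points supplies the small simply connected neighborhoods needed to slide arcs across attaching points. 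This gives surjectivity and the free-product relations without ever constructing a global deformation. Your van Kampen step in the third paragraph is fine once the reduction to a finite wedge (or its $\pi_1$-level analogue) is in hand.
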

\noindent
Further we consider topological 2-point identifications. The next proposition is a consequence of the HNN-Seifert-van Kampen Theorem in \cite[p. 1435]{Fri23}:
\begin{prop}\label{prop_funda_form_II}
Let $X$ be a locally simply connected and path-connected topological space. Further let $Y$ be a topological 2-point identification of $X$. Then $\pi_1(Y)$ is isomorphic to $\pi_1(X)\ast\mathbb{Z}$. 
\end{prop}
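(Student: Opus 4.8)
The plan is to recognize the passage from $X$ to $Y$ as a self-gluing of $X$ along two one-point subspaces and then to invoke the HNN-Seifert-van Kampen Theorem. Write $Y=X/{\sim}$, where $\sim$ identifies two distinct points $p,q\in X$, and let $\rho\colon X\to Y$ denote the quotient map with image point $*\coloneqq\rho(p)=\rho(q)$. I would view this identification as gluing the disjoint subspaces $\{p\}$ and $\{q\}$ of $X$ by the unique homeomorphism $h\colon\{p\}\to\{q\}$; this is precisely the situation handled by the cited theorem.

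Once the theorem applies, the computation is purely formal. It identifies $\pi_1(Y)$ with the HNN extension of $\pi_1(X)$ whose associated subgroups are the images of $\pi_1(\{p\})$ and $\pi_1(\{q\})$ under the inclusion-induced maps, the amalgamating isomorphism being induced by $h$. Since a point has trivial fundamental group, both associated subgroups are trivial. Hence the HNN extension carries no relations beyond those of $\pi_1(X)$, so it is the free product of $\pi_1(X)$ with the infinite cyclic group generated by the stable letter. This yields $\pi_1(Y)\cong\pi_1(X)\ast\mathbb{Z}$, as claimed. Geometrically the stable letter is the loop at $*$ obtained from a path in $X$ joining $p$ to $q$, which closes up after the identification.

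The main work lies in verifying the hypotheses of the HNN-Seifert-van Kampen Theorem at the one-point subspaces $\{p\}$ and $\{q\}$; this is where the standing assumptions on $X$ enter. I would use local simple connectivity to choose disjoint, simply connected open neighborhoods $U_p\ni p$ and $U_q\ni q$, which make $\{p\}$ and $\{q\}$ sufficiently well embedded (e.g.\ as neighborhood deformation retracts) for the theorem to apply and which ensure that in the induced van Kampen cover of $Y$ the relevant overlap splits into two simply connected pieces — the configuration forcing trivial edge groups on each side. Path-connectivity of $X$ guarantees that $Y$ is path-connected and furnishes the path from $p$ to $q$ whose image realizes the $\mathbb{Z}$-factor. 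I expect these neighborhood verifications to be the only genuine obstacle; the ensuing degeneration of the HNN extension with trivial associated subgroups to a free product with $\mathbb{Z}$ is immediate.
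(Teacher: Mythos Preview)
Your proposal is correct and matches the paper's approach exactly: the paper does not give a detailed argument but simply states that the proposition is a consequence of the HNN--Seifert--van Kampen Theorem in \cite[p.~1435]{Fri23}, which is precisely the theorem you invoke. Your elaboration---viewing the 2-point identification as a self-gluing along one-point subspaces, using local simple connectivity to secure the neighborhood hypotheses, and observing that the HNN extension over trivial edge groups collapses to a free product with $\mathbb{Z}$---is exactly how one unpacks that citation.
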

\glsxtrnewsymbol[description={The class of compact metric spaces.}]{1}{$\mathcal{M}$}
\glsxtrnewsymbol[description={The class of compact length surfaces whose connectivity is equal to $c$.}]{2}{$\mathcal{S}(c)$}
\glsxtrnewsymbol[description={The class of compact length surfaces with $b$ boundary components whose reduced connectivity number is equal to $c$.}]{3}{$\mathcal{S}\p{c,b}$}
\glsxtrnewsymbol[description={The class of geodesic generalized cactoids whose connectivity number is equal $c$.}]{
4}{$\mathcal{G}(c)$}
\glsxtrnewsymbol[description={The class of geodesic generalized cactoids with $b$ boundary components such that the reduced connectivity numbers of their maximal cyclic subsets sum up to $c$.}]{4}{$\mathcal{G}\p{c,b}$}
\glsxtrnewsymbol[description={The class of successive metric wedge sums of non-degenerate cyclicly connected compact length spaces and finite metric trees.}]{6}{$\mathcal{W}$}
\glsxtrnewsymbol[description={The class of successive metric wedge sums of compact length surfaces such that every wedge point is only shared by two of their surfaces.}]{7}{$\mathcal{W}_0$}
\printunsrtglossary[type=symbols,style=long,title=Notation]
\noindent
We note that we allow a change of the wedge point in every construction step of a successive metric wedge sum. 
\section{The Limit Spaces}\label{sec_lim}
The goal of this chapter is to show that the first statement of the \hyperlink{Main Theorem}{Main Theorem} implies the second.
\subsection{Topological Properties}\label{subsec_topo}
First we prove that the limit spaces are at most 2-dimensional and locally simply connected. For this we introduce some notations: 
\begin{nota}\label{nota_doub}
Let $\seq{X}{n}$ be a sequence in $\mathcal{S}\p{c,b}$, where $b>0$, and $X\in\mathcal{M}$ with $\conv{X_n}{X}$.\\
We denote the metric gluing of $X_n\sqcup X_n$ along $\partial X_n$ by $2X_n$. Then the sequence $\seq{2X}{n}$ is convergent and we denote its limit by $2X$. Especially there are subsets $X^\pm\subset 2X$ and maps $\tau^\pm\colon 2X\to X^\pm$ such that the following statements apply:
\begin{itemize}
\item[1)] $X^+\cup X^-=2X$.
\item[2)] $X^\pm$ is isometric to $X$.
\item[3)] The restriction of $\tau^\pm$ to $X^\pm$ is the identity map and the restriction to $X^\mp$ is an isometry.
\item[4)] The restriction of $\tau^\pm\circ\tau^\mp$ to $X^\pm$ is the identity map. 
\end{itemize}
We fix some isometries as in the second statement. For every $A\subset X$ we denote its corresponding subset of $X^\pm$ by $A^\pm$. After passing to a subsequence, we may and will assume that $\seq{\partial X}{n}$ is convergent and we denote its limit by $\partial^\infty X$. Finally also the following property is satisfied: 
\begin{itemize}
\item[5)] $X^+\cap X^-=\p{\partial^\infty X}^+=\p{\partial^\infty X}^-$. 
\end{itemize}
\end{nota}
\noindent
Now we show that the limit spaces fulfill the two topological properties:
\begin{proof}[Proof of Theorem \ref{trm_topo} (Part I)] There is a sequence $\seq{X}{n}$ of length spaces being homeomorphic to a fixed compact surface such that $\conv{X_n}{X}$. If $X_n$ is a closed surface, then $X$ is at most 2-dimensional and locally simply connected by Proposition \ref{prop_topo_closed}. Hence we may assume that the boundary of $X_n$ is non-empty.\\
1) We note that $2X_n$ is a closed surface. Hence $2X$ is at most 2-dimensional. Since we have $X^+\subset 2X$, we derive that $X^+$ is also at most 2-dimensional \comp{p. 266}{Sak13}. Because $X$ and $X^+$ are isometric, we deduce that $X$ is at most 2-dimensional.\\
2) The statements listed in Notation \ref{nota_doub} imply the following: Let $V\subset 2X$ be an open and simply connected subset. Since $V$ is open, the same applies to $\tau^+(V)$. Moreover every loop in $\tau^+(V)$ is the composition of $\tau^+$ with some loop in $V$. The map $\tau^+$ is continuous. Due to the fact that $V$ is simply connected, we hence get that $\tau^+(V)$ is simply connected. We further note that the diameter of $\tau^+(V)$ does not exceed that of $V$.\\
Because $2X_n$ is a closed surface, the space $2X$ is locally simply connected. It follows that $X^+$ is locally simply connected. Therefore the same applies to $X$.
\end{proof}
\subsection{Regular Convergence}
In this subsection we consider sequences in $\mathcal{S}\p{c,b}$ with additional topological control: 
\begin{defi}
Let $\seq{X}{n}$ be a sequence in $\mathcal{S}\p{c,b}$ where $b>0$. Then the sequence is called \emph{regular} provided $\inf\cp{diam\p{J_n}\colon n\inN}$ is positive for every sequence $\seq{J}{n}$ such that $J_n$ is a non-contractible Jordan curve in $2X_n$.
\end{defi} 
\noindent
In other words, the sequence $\seq{X}{n}$ is regular if and only if the sequence $\seq{2X}{n}$ is regular. Provided the sequence $\seq{X}{n}$ converges to some $X\in\mathcal{M}$, the definition directly implies that $\partial^\infty X$ has $b$ connected components.\\
The next result states a property of non-regular sequences. We remark that Notation \ref{nota_doub} can also be applied to the sequence constantly being $X_n$. In the upcoming proof we use this notation and denote the corresponding maps by $\tau^\pm_n$.   
\begin{lem}\label{lem_non_reg}
Let $\seq{X}{n}$ be a non-regular sequence in $\mathcal{S}\p{c,b}$ where $b>0$. After passing to a subsequence, we may assume that one of the following cases applies: 
\begin{itemize}
\item[1)] There is a sequence $\seq{\gamma}{n}$ such that $\gamma_n$ is a non-separating simple arc in $X_n$. Moreover we have $\conv{diam\p{\gamma_n}}{0}$.
\item[2)] There is a sequence $\seq{\gamma}{n}$ such that $\gamma_n$ is a separating simple arc in $X_n$ which does not form a contractible Jordan curve together with a subarc of some boundary component. Moreover we have $\conv{diam\p{\gamma_n}}{0}$. 
\item[3)] There is a sequence $\seq{\gamma}{n}$ such that $\gamma_n$ is a non-contractible simple Jordan curve in $X_n$. Moreover we have $\conv{diam\p{\gamma_n}}{0}$.    
\end{itemize}
\end{lem}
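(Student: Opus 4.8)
The plan is to exploit the doubling construction of Notation \ref{nota_doub}, applied to the constant sequence $X_n$, together with the reflection symmetry it carries. Recall that $2X_n=X_n^+\cup X_n^-$ is a closed surface, that $X_n^+\cap X_n^-=\partial X_n$ is the fixed-point set of the reflection isometry $\rho_n\colon 2X_n\to 2X_n$ exchanging the two copies, and that $\tau_n^+\colon 2X_n\to X_n^+\cong X_n$ is the associated folding retraction. Since $\seq{X}{n}$ is non-regular, by definition there is a sequence $\seq{J}{n}$ of non-contractible Jordan curves in $2X_n$ with $\inf\cp{diam\p{J_n}\colon n\inN}=0$; passing to a subsequence I arrange $\conv{diam\p{J_n}}{0}$. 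As $\partial X_n$ is a $1$-manifold in the surface $2X_n$, a $C^0$-small isotopy supported in a collar of $\partial X_n$ (changing the diameter by at most $1/n$) puts $J_n$ in general position, so that $J_n\cap\partial X_n$ is finite; I keep calling the result $J_n$. After a further subsequence either $J_n\cap\partial X_n=\emptyset$ for all $n$, or $J_n\cap\partial X_n\neq\emptyset$ for all $n$.

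In the first case the connected set $J_n$ lies in one of the open copies, say $\operatorname{int}\p{X_n^+}\cong\operatorname{int}\p{X_n}$. A null-homotopy of $J_n$ inside $X_n^+$ would be a null-homotopy in $2X_n$; hence $J_n$ is non-contractible in $X_n$, and since $\left|J_n\cap\partial X_n\right|=0\le 1$ it is a simple Jordan curve. This is the third case of the lemma, so it remains to treat the case where $J_n$ meets $\partial X_n$.

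Here I would reduce to a \emph{reflection-invariant} representative. The arcs of $J_n\cap\overline{X_n^+}$ are small simple arcs of $X_n$, but an arbitrary such arc need not have $J_n$ as its reflection double, so its essentiality is not yet tied to that of $J_n$. The remedy is to replace $J_n$ by a shortest non-contractible Jordan curve in its free homotopy class and to symmetrise it under $\rho_n$: a minimiser and its mirror image $\rho_n(J_n)$ are again minimisers, and a cut-and-paste at their intersection points produces a $\rho_n$-invariant minimiser. An invariant essential curve cannot lie in a single copy (its mirror image would be disjoint from it), so it meets $\partial X_n$ exactly in the two fixed points of the induced reflection of the circle. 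Then $\gamma_n\coloneqq J_n\cap\overline{X_n^+}$ is a single simple arc with $\conv{diam\p{\gamma_n}}{0}$ whose reflection double is all of $J_n$. Consequently $\gamma_n$ is essential: if it were separating and cut off a disc together with a subarc of $\partial X_n$, then $J_n$ would bound a disc in $2X_n$ and hence be contractible, contradicting the choice of $J_n$. Invoking Proposition \ref{prop_clas_arc} and Proposition \ref{prop_clas_arc_II}, $\gamma_n$ is thus either non-separating (the first case) or separating without cutting off such a disc (the second case); a final subsequence fixes one of these.

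The main obstacle is carrying out this reduction to an invariant, essential representative \emph{while preserving} the control $\conv{diam\p{\gamma_n}}{0}$: passing to a shortest curve in the homotopy class and symmetrising keeps the class essential but a priori only preserves, rather than shrinks, the diameter. I would control this by an innermost-arc argument combined with minimality. If every arc of $J_n\cap\overline{X_n^+}$ and of its reflection is boundary-parallel, then successive isotopies across innermost discs remove all intersections with $\partial X_n$ and push $J_n$, still essential, into a single copy, returning us to the disjoint case treated above; otherwise some arc is already essential. Guaranteeing that these surgery discs, equivalently the symmetrisation, stay of diameter tending to $0$ is the delicate point, and is precisely where the length-space setting (as opposed to the smooth one) must be handled with care; the shortest-curve input is what keeps the relevant diameters small.
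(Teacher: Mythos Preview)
Your disjoint case is correct, and the dichotomy buried in your last paragraph --- either some arc of $J_n\cap X_n^{\pm}$ is already essential, giving case 1) or 2), or every such arc is boundary-parallel --- is exactly the right split. The gap is in the boundary-parallel branch. Isotoping $J_n$ across innermost discs into a single copy keeps it essential, but each such disc is bounded by an arc of $J_n$ together with a subarc of $\partial X_n$, and that boundary subarc is in no way controlled by $\operatorname{diam}\p{J_n}$; the isotoped curve can have large diameter. The shortest-curve and symmetrisation route has the same defect: you only know $\operatorname{diam}\p{J_n}\to 0$, not the length, so a length-minimiser in the free homotopy class of $J_n$ need not be short, and the cut-and-paste of two minimisers need not even be embedded. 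The ``delicate point'' you flag is a genuine obstruction, not a technicality.

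The paper resolves it by trading embeddedness for diameter control. Assuming cases 1) and 2) fail, every arc of $J_n$ in $X_n^{\pm}$ is homotopic rel endpoints into $\partial X_n$, so $J_n$ is freely homotopic in $2X_n$ to a loop in a single boundary circle $b_n\subset X_n^+$. Instead of isotoping, one \emph{folds}: set $\gamma_n\coloneqq\tau_n^+\p{J_n}$. Since $\tau_n^+$ is a $1$-Lipschitz retraction onto $X_n^+$, one gets $\operatorname{diam}\p{\gamma_n}\le\operatorname{diam}\p{J_n}\to 0$ for free, and $\gamma_n$ is non-contractible in $X_n^+$ because $J_n$ was homotopic to a non-contractible loop already lying in $X_n^+$. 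The loop $\gamma_n$ is not embedded, but a non-contractible loop in a surface contains a non-contractible Jordan subcurve (the paper invokes \cite{LW18}), and this subcurve --- still of vanishing diameter --- furnishes case 3). The idea you are missing is precisely this use of the folding retraction in place of an isotopy.
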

\begin{proof}
We consider the case that the first two statements of the lemma do not apply. By non-regularity we may assume the existence of a sequence $\seq{J}{n}$ such that $J_n$ is a non-contractible Jordan curve in $2X_n$ and $\conv{diam\p{J_n}}{0}$. Since the first statement of the lemma does not apply, we may assume that $J_n$ intersects exactly one boundary component $b_n\subset X_n^+$ and the intersection is non-degenerate.\\ There is a homeomorphism $f_n\colon 2X_n\to Y_n$ such that $Y_n$ is a Riemannian manifold and $f_n\p{b_n}$ is a piecewise geodesic Jordan curve. We note that the Jordan curve $f_n\p{J_n}$ can be obtained as the Hausdorff limit of piecewise geodesic Jordan curves being homotopic to $f_n\p{J_n}$ \comptwo{p. 1794}{Shi99}{pp. 413-415}{Why35}. Since $Y_n$ is a compact Riemannian manifold, there is some $\varepsilon_n>0$ such that every pair of points in $Y_n$ with distance less than $\varepsilon_n$ can be connected by a unique geodesic. It follows that two distinct geodesic Jordan curves in $Y_n$ intersect at most finitely many times.\\
By the observations above we may assume that $J_n$ and $b_n$ intersect only finitely many times. Then there is a finite subdivision of $J_n$ into simple arcs in $X_n^+$ and $X_n^-$ and arcs in $b_n$. Because the second statement of the lemma does not apply, we may assume that every of the arcs is homotopic to some arc in $b_n$. This yields that $J_n$ is homotopic to some loop in $b_n$.\\
We derive that $\gamma_n\coloneqq \tau_n^+\p{J_n}$ is a non-contractible loop in $X_n^+$ and $\conv{diam\p{\gamma_n}}{0}$. In particular we may assume that $\gamma_n$ does not intersect $\partial X_n^+$. Moreover we find a Jordan curve in $\gamma_n$ which is non-contractible in $X_n^+$ \comp{p. 626}{LW18}. Therefore we finally may assume that $\gamma_n$ is a Jordan curve. 
\end{proof}
\subsubsection{Limits of Boundary Components}
As a first step we  investigate the limits of boundary components for regular sequences. In the next three results we extend Whyburn's proof ideas regarding the limits of discs \comp{pp. 421-424}{Why35}: 
\begin{prop}\label{prop_lim_cact}
Let $\seq{X}{n}$ be a regular sequence in $\mathcal{S}\p{c,q}$, where $q>0$, and $X\in\mathcal{M}$ with $\conv{X_n}{X}$. If $b$ is a connected component of $\partial^\infty X$, then $b$ is a 1-cactoid. 
\end{prop}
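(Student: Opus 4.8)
The plan is to apply Whyburn's criterion for $1$-cactoids, Proposition \ref{prop_1cact}. First note that $b$ is a continuum: as a connected component of the compact set $\partial^\infty X$ it is closed, hence compact, and it is connected by definition. By Proposition \ref{prop_1cact} it therefore suffices to show that for every pair of conjugate points $x,y\in b$ the set $b\setminus\{x,y\}$ is disconnected; in particular this will automatically yield that $b$ is a Peano space, since every $1$-cactoid is one.

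I would carry out the argument after passing to the doubles. By regularity $\partial^\infty X$ has exactly $q$ components, so after reindexing $b$ is the Hausdorff limit of a single boundary circle $b_n\subset\partial X_n$. Since $\seq{2X}{n}$ is a regular sequence of closed surfaces with $\conv{2X_n}{2X}$, the structure theory for limits of closed surfaces (Theorem \ref{trm_lim_closed} and Proposition \ref{prop_lim_reg_closed}) shows that every maximal cyclic subset of $2X$ is a closed surface. Moreover $\partial^\infty X=X^+\cap X^-$ is contained in the fixed-point set of the isometry $r\colon 2X\to 2X$ obtained as a subsequential limit of the canonical reflections of the surfaces $2X_n$ that interchange the two copies of $X_n$. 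Now fix conjugate points $x,y\in b$. By Lemma \ref{lem_conj} they lie on a Jordan curve $J\subset b$, which is cyclicly connected and hence contained in a unique maximal cyclic subset $T$ of $2X$; as above $T$ is a closed surface, and since $J\subset\mathrm{Fix}(r)$ the reflection $r$ maps $T$ to itself and restricts to an involution of $T$.

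Everything then reduces to understanding the trace $\partial^\infty X\cap T$. I would show that it is a disjoint union of $1$-spheres: on the circle carrying $J$ the two points $x,y$ can be separated, and since this circle is contained in the surface $T$, Lemma \ref{lem_conj} propagates the separation to $b$, giving the desired disconnection of $b\setminus\{x,y\}$. The main obstacle is precisely this last claim, namely that the fixed set of the limiting reflection $r$ meets each maximal cyclic surface $T$ in a $1$-manifold rather than in some wilder set. Intuitively, $b_n\setminus\{x_n,y_n\}$ splits the circle $b_n$ into two arcs, and one must prevent these arcs from pinching together in the limit except at cut points of $b$; this is exactly where the hypothesis of regularity enters, ruling out the relevant degenerations, since a pinching would manifest as non-contractible Jordan curves in $2X_n$ of vanishing diameter. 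Making this precise, by extending Whyburn's disc estimates to the doubled surfaces, is the technical heart of the proof.
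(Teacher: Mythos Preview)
Your plan contains a circularity. In the second paragraph you invoke Lemma \ref{lem_conj} to place the conjugate pair $x,y$ on a Jordan curve $J\subset b$, but Lemma \ref{lem_conj} is stated for Peano spaces, and at this point $b$ is merely a continuum; its local connectedness is part of what is being proved. The same problem recurs when you later want to show that $\partial^\infty X\cap T$ is a disjoint union of circles: in the paper this is Lemma \ref{lem_b_hom_S}, whose proof \emph{uses} Proposition \ref{prop_lim_cact}, so appealing to that structure here would again be circular. More broadly, the detour through the doubled surfaces, the reflection $r$, and the classification of maximal cyclic subsets of $2X$ imports machinery that is logically downstream of the result you are trying to establish.

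The paper's argument is essentially the direct one you sketch in your final paragraph, carried out in $X_n$ rather than in $2X_n$, and phrased as a proof by contradiction so that no Peano hypothesis on $b$ is needed. One assumes there exist conjugate $x,y\in b$ with $b\setminus\{x,y\}$ connected, lifts to $x_n,y_n\in b_n$, and lets $\alpha_n,\beta_n$ be the two arcs of $b_n$ between them, with limits $\alpha,\beta$ satisfying $\alpha\cup\beta=b$. Connectedness of $b\setminus\{x,y\}$ forces a point $z\in\alpha\cap\beta\setminus\{x,y\}$; approximants $z_n\in\alpha_n$ and $\tilde z_n\in\beta_n$ are then joined by a geodesic $\gamma_n\subset X_n$ of vanishing diameter. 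Regularity (applied to the double of $\gamma_n$) forces $\gamma_n$ to be a \emph{separating} simple arc, splitting $X_n$ into pieces $U_n\ni x_n$ and $V_n\ni y_n$ with $U_n\cap V_n=\gamma_n$. Passing to the limit gives $U\cap V=\{z\}$, so $z$ separates $x$ and $y$ in $X$, hence in $b$, contradicting conjugacy. The whole argument stays inside $X_n$; no appeal to the structure of maximal cyclic subsets of $2X$ is needed.
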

\begin{proof}
By regularity there is a sequence $\seq{b}{n}$ such that $b_n$ is a boundary component of $X_n$ and $\conv{b_n}{b}$. Since $b$ can be obtained as the Hausdorff limit of continua, it is also a continuum.\\
For the sake of contradiction we assume that $b$ is not a 1-cactoid. From Proposition \ref{prop_1cact} it follows the existence of conjugate points $x,y\in b$ such that $b\setminus\left\{x,y\right\}$ is connected. There are sequences $\seq{x}{n}$ and $\seq{y}{n}$ with $x_n,y_n\in b_n$ and $x_n\neq y_n$ such that $\conv{x_n}{x}$ and $\conv{y_n}{y}$. We denote the subarcs of $b_n$ connecting $x_n$ and $y_n$ by $\alpha_n$ and $\beta_n$. Moreover we may assume that there are $\alpha,\beta\subset b$ such that $\conv{\alpha_n}{\alpha}$ and $\conv{\beta_n}{\beta}$.\\
Then we have $\alpha\cup\beta=b$ and it exists $z\in \alpha\cap \beta\setminus\cp{x,y}$. Further we choose sequences $\seq{z}{n}$ and $\seq{\Tilde{z}}{n}$ with $z_n\in \alpha_n$ and $\Tilde{z}_n\in \beta_n$ such that $\conv{z_n}{z}$ and $\conv{\Tilde{z}_n}{z}$.\\
Let $\gamma_n\subset X_n$ be a geodesic between $z_n$ and $\Tilde{z}_n$. After passing to a subsequence and subarcs of the geodesics, we may assume $\gamma_n$ to be a simple arc. By regularity and $\conv{diam\p{\gamma_n}}{0}$ we also may assume that $\gamma_n$ is separating.\\
Now there are compact surfaces $U_n,V_n\subset X_n$ such that $x_n\in U_n$, $y_n\in V_n$, $U_n\cup V_n=X_n$ and $U_n\cap V_n=\gamma_n$. We may assume the corresponding sequences to be convergent with limits $U$ and $V$. This leads to $U\cup V=X$ and $U\cap V=\cp{z}$ \comp{p. 412}{Why35}. Finally we derive that $z$ separates $x$ and $y$ in $X$ and therefore also in $b$. A contradiction.
\end{proof}
\noindent
The proof above also demonstrates the following lemma:
\begin{lem}\label{lem_sep}
Let $\seq{X}{n}$ be a regular sequence in $\mathcal{S}\p{c,q}$, where $q>0$, and $X\in\mathcal{M}$ with $\conv{X_n}{X}$. If $b$ is a connected component of $\partial^\infty X$ and $x,y,z\in b$ are such that $z$ separates $x$ and $y$ in $b$, then $z$ separates $x$ and $y$ in $X$. 
\end{lem}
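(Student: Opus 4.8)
The plan is to recycle the geodesic-surgery construction from the proof of Proposition~\ref{prop_lim_cact}; the only genuinely new point is that here the triple $x,y,z$ is prescribed in advance rather than produced along the way, so the task reduces to feeding it into that construction.

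First I would use regularity to pick boundary components $b_n\subset X_n$ with $\conv{b_n}{b}$, together with $x_n,y_n\in b_n$, $x_n\neq y_n$, converging to $x$ and $y$. Writing $b_n$ as the union of its two subarcs $\alpha_n,\beta_n$ joining $x_n$ and $y_n$ and passing to a subsequence with $\conv{\alpha_n}{\alpha}$ and $\conv{\beta_n}{\beta}$, I obtain subcontinua $\alpha,\beta\subset b$, each containing both $x$ and $y$, with $\alpha\cup\beta=b$. The key observation is that $z\in\alpha\cap\beta\setminus\cp{x,y}$: indeed $z\neq x,y$ since a point cannot separate itself from another, and because $z$ separates $x$ and $y$ in $b$ no connected subset of $b$ containing both $x$ and $y$ can avoid $z$, so both $\alpha$ and $\beta$ must contain $z$. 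This puts $x,y,z$ into exactly the configuration arising in the proof of Proposition~\ref{prop_lim_cact}.

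From here the earlier argument applies verbatim: I would choose $z_n\in\alpha_n$ and $\tilde z_n\in\beta_n$ converging to $z$, connect them by geodesics $\gamma_n\subset X_n$, and after passing to simple subarcs use regularity together with $\conv{diam\p{\gamma_n}}{0}$ to make $\gamma_n$ separating. This produces compact surfaces $U_n\ni x_n$ and $V_n\ni y_n$ with $U_n\cup V_n=X_n$ and $U_n\cap V_n=\gamma_n$, whose convergent limits satisfy $U\cup V=X$ and $U\cap V=\cp{z}$. Since $x\in U\setminus\cp{z}$ and $y\in V\setminus\cp{z}$ lie in the two disjoint relatively closed---hence relatively open---pieces of $X\setminus\cp{z}$, the point $z$ separates $x$ and $y$ in $X$.

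The main obstacle is conceptual rather than computational: one must verify that the prescribed $z$ really lands in $\alpha\cap\beta$, i.e.\ that separation inside $b$ forces $z$ onto both limiting arcs. Once this is in place, the delicate identity $U\cap V=\cp{z}$---which relies on regularity to force the vanishing geodesics $\gamma_n$ to be separating---is already supplied by the proof of Proposition~\ref{prop_lim_cact} and requires no further work.
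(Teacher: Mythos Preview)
Your proposal is correct and matches the paper's approach exactly: the paper does not give a separate proof, stating only that the proof of Proposition~\ref{prop_lim_cact} already demonstrates this lemma. You have correctly isolated the single new ingredient---that the prescribed separating point $z$ must lie in $\alpha\cap\beta\setminus\cp{x,y}$ because $\alpha$ and $\beta$ are connected subsets of $b$ containing both $x$ and $y$---after which the argument of Proposition~\ref{prop_lim_cact} runs verbatim.
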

\noindent
In the next two results we study the intersections with maximal cyclic subsets of $2X$:
\begin{lem}\label{lem_b_hom_S}
Let $\seq{X}{n}$ be a regular sequence in $\mathcal{S}\p{c,q}$, where $q>0$, and $X\in\mathcal{M}$ with $\conv{X_n}{X}$. If $b$ is a connected component of $\partial^\infty X$ and $T$ is a maximal cyclic subset of $2X$ with $\left|T\cap b^+\right|>1$, then the intersection is homeomorphic to the 1-sphere. 
\end{lem}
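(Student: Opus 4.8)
The plan is to avoid the doubling involution altogether and instead establish a \emph{separation--transfer principle}: a single point that separates two points inside the seam $b^+$ must already separate them in the whole double $2X$. Combined with the fact that a maximal cyclic subset has no cut point, this will force any two points of $S\coloneqq T\cap b^+$ to be conjugate in $b^+$, from which the 1-cactoid structure of $b^+$ delivers the circle. As preparation, note that $b$ is a 1-cactoid by Proposition~\ref{prop_lim_cact}, hence so is $b^+$ via the isometry $X\cong X^+$ of Notation~\ref{nota_doub}; thus its maximal cyclic subsets are $1$-spheres, and it suffices to prove that $S$ is exactly one of them.

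First I would prove the transfer claim: if $z\in b^+$ separates $x,y\in b^+$ inside $b^+$, then $z$ separates $x,y$ in $2X$. Transporting through $X\cong X^+$ and applying Lemma~\ref{lem_sep} yields that $z$ separates $x$ and $y$ in $X^+$; let $A,B$ be the components of $X^+\setminus\cp{z}$ containing $x,y$ respectively. The reflection exchanging $X^+$ and $X^-$ and fixing the seam $X^+\cap X^-=\p{\partial^\infty X}^+$ then produces the corresponding components in $X^-$, and since $X^\pm$ are locally connected (so their components are clopen) the two unions $A\cup A^-$ and $B\cup B^-$ are disjoint, open, and exhaust $2X\setminus\cp{z}$. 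This gives the separation in $2X$. I expect this reflection/clopen-decomposition argument to be the main obstacle, since one must check carefully that the two sheets do not reconnect $x$ and $y$ through any seam point other than $z$.

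With the claim in hand the rest is formal. For distinct $x,y\in S$ (which exist since $\left|T\cap b^+\right|>1$), were some $z\in b^+$ to separate them in $b^+$, it would separate them in $2X$ by the claim; but $T$ is a non-degenerate maximal cyclic subset, hence has no separating point by Proposition~\ref{prop_cycl_con_trm}, so $T\setminus\cp{z}$ is connected and keeps $x,y$ in one component of $2X\setminus\cp{z}$ -- a contradiction. Thus $x,y$ are conjugate in $b^+$, so by Lemma~\ref{lem_conj} they lie on a common circle $C_{xy}\subset b^+$; as $C_{xy}$ is cyclicly connected it sits inside a maximal cyclic subset $\Phi$ of $2X$, and from $x,y\in T\cap\Phi$ together with Lemma~\ref{lem_Peano} we get $\Phi=T$, whence $C_{xy}\subset S$. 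Consequently every two points of $S$ lie on a Jordan curve contained in $S$, so $S$ is cyclicly connected; being non-degenerate it lies in a single maximal cyclic subset $C$ of $b^+$ by Lemma~\ref{lem_conj}, and $C_{xy}\subset S\subset C$ forces $C_{xy}=C$ again by Lemma~\ref{lem_Peano}. Hence $S=C$ is homeomorphic to the 1-sphere.
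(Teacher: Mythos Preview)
Your argument is correct and follows essentially the paper's line: show that any two points of $T\cap b^+$ are conjugate in $b^+$, then use the $1$-cactoid structure of $b^+$ to identify the intersection with a single maximal cyclic subset. The paper obtains the separation transfer in one stroke via the projection $\tau^+$ (a path in $2X$ avoiding $z\in b^+$ composes with $\tau^+$ to a path in $X^+$ avoiding $z$, since $\p{\tau^+}^{-1}(z)=\cp{z}$ for seam points), whereas you rebuild the separation in $2X$ by reflecting clopen pieces; so despite your opening sentence you are using the doubling involution, just in its decomposition rather than projection incarnation. One small gap: clopenness of $A$ in $X^+\setminus\cp{z}$ does not by itself give openness of $A\cup A^-$ in $2X\setminus\cp{z}$---you need that $X^\pm$ are \emph{closed} in $2X$ to push closures back into each sheet---and the ``exhaust'' claim is both unnecessary and false when $X^+\setminus\cp{z}$ has more than two components. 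Your final step, showing that the Jordan curve $C_{xy}\subset b^+$ already lies in $T$ (hence in $T\cap b^+$) so that the intersection is itself cyclicly connected, is a clean explicit version of what the paper compresses into a bare citation of Lemmas~\ref{lem_Peano} and~\ref{lem_conj}.
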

\begin{proof}
Let $x,y\in T\cap b^+$ with $x\neq y$. Since $x,y\in T$, the points lie on some Jordan curve in $2X$. If $\gamma\subset 2X$ is a path between $x$ and $y$ which does not contain a certain point of $b^+$, then $\tau^+\circ\gamma$ is also such a path. Therefore Lemma \ref{lem_sep} implies that $x$ and $y$ are conjugate in $b^+$.\\
By Proposition \ref{prop_lim_cact} the subset $b^+$ is a 1-cactoid. From Lemma \ref{lem_conj} we derive that $x$ and $y$ are contained in some maximal cyclic subset $S\subset b^+$. We note that $S$ is homeomorphic to the 1-sphere. Using Lemma \ref{lem_Peano} and Lemma \ref{lem_conj}, we get $T\cap b^+\subset S$ and $S\subset T$. Especially we have $S\subset T\cap b^+$ and therefore $S=T\cap b^+$.
\end{proof} 
\begin{lem}\label{lem_con_inter}
Let $\seq{X}{n}$ be a regular sequence in $\mathcal{S}\p{c,q}$, where $q>1$, and $X\in\mathcal{M}$ with $\conv{X_n}{X}$. If $b$ is a connected component of $\partial^\infty X$, then there is a maximal cyclic subset of $2X$ which intersects  $b^+$ and a further connected component of $\p{\partial^\infty X}^+$.
\end{lem}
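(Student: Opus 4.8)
The plan is to tie two distinct boundary components together by a single non-contractible Jordan curve in the doubles and to push this curve to the limit. Since $q>1$, the remark after the definition of regularity gives that $\partial^\infty X$ has $q\ge 2$ connected components; fix a component $b'$ of $\partial^\infty X$ distinct from $b$. As in the proof of Proposition~\ref{prop_lim_cact} I would choose boundary components $b_n,b'_n\subset X_n$ with $\conv{b_n}{b}$ and $\conv{b'_n}{b'}$ together with points $x_n\in b_n$ and $y_n\in b'_n$ converging to some $x\in b$ and $y\in b'$. As $X_n$ is a connected surface, there is a simple arc $\alpha_n\subset X_n$ from $x_n$ to $y_n$ whose interior avoids $\partial X_n$. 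Doubling it yields the Jordan curve $J_n\coloneqq\alpha_n\cup\tau^-_n\p{\alpha_n}$ in $2X_n$, which meets the seam $b_n$ only in $x_n$ and crosses it there. Hence the mod-$2$ intersection number of $J_n$ and $b_n$ equals one, so $J_n$ is non-separating and therefore non-contractible.

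Since $b$ and $b'$ are disjoint continua, $d\p{x,y}>0$, so $diam\p{J_n}$ stays bounded below and, after passing to a subsequence, the curves $J_n$ Hausdorff converge to a non-degenerate continuum, while $\alpha_n$ and $\tau^-_n\p{\alpha_n}$ converge to continua $\alpha\subset X^+$ and $\alpha^-\subset X^-$ with $x,y\in\alpha\cap\alpha^-$. It then suffices to prove that $x$ and $y$ are conjugate in $2X$: by Lemma~\ref{lem_conj} they would lie on a common Jordan curve, which is non-degenerate and cyclicly connected and hence contained in a single maximal cyclic subset $T$ of $2X$. This $T$ meets $b^+$ in $x$ and the further component $\p{b'}^+$ in $y$, which is the assertion. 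So assume some $z\in 2X$ separates $x$ from $y$. As $\alpha\subset X^+$ and $\alpha^-\subset X^-$ are connected and contain both points, each contains $z$, whence $z\in X^+\cap X^-=\p{\partial^\infty X}^+$; thus every separating point lies on the seam, and since $X^+\subset 2X$ it already separates $x$ from $y$ inside $X^+$.

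The hard part will be to exclude separating points on the seam. Here I would use the reflection involution $R\colon 2X\to 2X$ arising as the limit of the reflections of $2X_n$, which fixes $\p{\partial^\infty X}^+$ pointwise and interchanges $X^+$ and $X^-$. A separating seam point $z$ is fixed by $R$ and is traversed by both $\alpha$ and $\alpha^-$, so the limit curve is pinched at $z$ into two $R$-invariant loops that meet only in $z$. I would then invoke the concentration of topology for regular limits of closed surfaces: by Proposition~\ref{prop_lim_reg_closed} and Proposition~\ref{prop_funda_form} all non-trivial fundamental group of $2X$ is carried by the one maximal cyclic subset homeomorphic to $2X_n$, the remaining cyclic elements being $2$-spheres. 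Since regularity forces the short separating curves that produce the cut point $z$ in $2X_n$ to be contractible, one of the two loops at $z$ lies in a simply connected region and is null-homotopic; deleting the corresponding crossing would exhibit $J_n$ as contractible for large $n$, contradicting the first paragraph. I expect this last step to be the genuine obstacle, as it is precisely where regularity and the non-contractibility of $J_n$ must be converted into a topological obstruction inside the possibly singular limit.
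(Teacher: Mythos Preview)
Your final paragraph is where the argument breaks down, and you correctly flag it as the obstacle. The sketch there does not go through: the Hausdorff limit of the arcs $\alpha_n$ is a priori only a continuum, not an arc pinched once at $z$, so there is no clean decomposition into ``two $R$-invariant loops meeting only in $z$''. Even granting such a picture, the appeal to Proposition~\ref{prop_lim_reg_closed} and Proposition~\ref{prop_funda_form} does not yield that one loop is null-homotopic, and there is no mechanism by which contractibility of a limit piece would force $J_n$ to be contractible for large $n$. Regularity controls the diameter of non-contractible Jordan curves in $2X_n$, not the homotopy type of limit configurations, so the contradiction you aim for is not available from the ingredients you list.

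The paper sidesteps this difficulty completely by a much shorter argument that never touches $2X_n$. Instead of an arbitrary simple arc $\alpha_n$, choose a \emph{geodesic} $\gamma_n\subset X_n$ from a point of $b_n$ to a point of $\partial X_n\setminus b_n$. Geodesics pass to geodesics under Gromov--Hausdorff convergence, so the limit $\gamma\subset X$ is again a geodesic, in particular an arc. By regularity the endpoints of $\gamma$ lie in $b$ and in $\partial^\infty X\setminus b$; after shrinking to a subarc one may assume the interior of $\gamma$ misses $\partial^\infty X$ entirely. Then $J\coloneqq\gamma^+\cup\gamma^-$ is a genuine non-degenerate Jordan curve in $2X$, and Lemma~\ref{lem_conj} immediately places it inside a single maximal cyclic subset $T$, which meets both $b^+$ and another component of $\p{\partial^\infty X}^+$. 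No conjugacy argument, no seam analysis, no homotopy considerations are needed: the whole ``hard part'' simply does not arise once the limit curve is an arc whose interior avoids the seam.
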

\begin{proof}
By regularity there is a sequence $\seq{b}{n}$ such that $b_n$ is a boundary component of $X_n$ and $\conv{b_n}{b}$. We choose a geodesic $\gamma_n\subset X_n$ between some point of $b_n$ and some point of $\partial X_n\setminus b_n$. Then we may assume the existence of a geodesic $\gamma\subset X$ such that $\conv{\gamma_n}{\gamma}$.\\
Due to regularity $\gamma$ connects some point of $b$ with some point of $\partial^\infty X\setminus b$. After passing to a subarc, we may assume that the interior of $\gamma$ does not intersect $\partial^\infty X$. It follows that $J\coloneqq\gamma^+\cup\gamma^-$ is a non-degenerate Jordan curve in $2X$. By Lemma \ref{lem_conj} there is a maximal cyclic subset $T\subset 2X$ containing $J$. We conclude that $T$ intersects $b^+$ and a further connected component of $\p{\partial^\infty X}^+$. 
\end{proof}
\subsubsection{Regular Limit Spaces}
Now we describe the limits of regular sequences:
\begin{lem}\label{lem_reg_max}
Let $\seq{X}{n}$ be a regular sequence in $\mathcal{S}\p{c,b}$, where $b>0$, and $X\in\mathcal{M}$ with $\conv{X_n}{X}$. Further let $T$ be a maximal cyclic subset of $X^+$. Then one of the following cases applies: 
\begin{itemize}
\item[1)] $T$ is a maximal cyclic subset of $2X$ and is a closed surface. Moreover we have $\left|T\cap\p{\partial^\infty X}^+\right|\le 1$.
\item[2)] $T\cup\tau^-(T)$ is a maximal cyclic subset of $2X$ and $T$ is a compact surface with non-empty boundary. Moreover we have $\partial T=T\cap\p{\partial^\infty X}^+$. 
\end{itemize}
Also the following statement applies: Every maximal cyclic subset of $2X$ which is not a maximal cyclic subset of $X^+$ or $X^-$ can be obtained as in the second case.
\begin{proof}
Since the sequence is regular, Proposition \ref{prop_lim_reg_closed} implies that every maximal cyclic subset of $2X$ is a closed surface.\\
First we consider the case that $T$ is a maximal cyclic subset of $2X$: For the sake of contradiction we assume $\left|T\cap\p{\partial^\infty X}^+\right|>1$. Then Proposition \ref{prop_cycl_con_trm} implies that $T\cup\tau^-(T)$ is cyclicly connected. Further we have $T\neq\tau^-(T)$ by Proposition \ref{prop_lim_cact}. Hence $T$ is a proper subset of some cyclicly connected subset in $2X$. A contradiction.\\
Now we consider the case that $T$ is not a maximal cyclic subset of $2X$: By Lemma \ref{lem_conj} there is a maximal cyclic subset $S\subset 2X$ containing $T$.\\
Let $V$ be the closure of a connected component of $S\setminus\p{\partial^\infty X}^+$. Then we may assume $V\subset X^+$. As a consequence of Lemma \ref{lem_b_hom_S}, the subset $V$ is a compact surface. It also follows that $V\cap\p{\partial^\infty X}^+$ is a disjoint union of $\partial V$ and $k$ points. In particular $\partial V$ is non-empty since $S$ is a closed surface. Due to the fact that $W\coloneqq V\cup\tau^-(V)$ is cyclicly connected, Lemma \ref{lem_Peano} and Lemma \ref{lem_conj} yield $W\subset S$. Because $S$ is a closed surface, we derive $k=0$ and $S=W$. This implies $V=X^+\cap S$ and therefore $T\subset V$. We note that $V$ is cyclicly connected. Hence we get $T=V$.\\
Using Lemma \ref{lem_Peano} and Lemma \ref{lem_conj}, the paragraph above also implies the last statement of our result. 
\end{proof}
\end{lem}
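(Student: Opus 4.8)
The plan is to exploit that the double $2X$ arises as a \emph{regular} limit of closed surfaces, so that its maximal cyclic subsets are already classified. Since $\seq{X}{n}$ is regular, so is $\seq{2X}{n}$, and Proposition \ref{prop_lim_reg_closed} applies to the closed surfaces $2X_n$: every maximal cyclic subset of $2X$ is a closed surface. The stated dichotomy then merely records whether the given maximal cyclic subset $T\subset X^+$ is already maximal cyclic in the whole double $2X$ (Case 1) or is properly contained in a strictly larger one (Case 2). So the strategy is to split along this distinction and analyse each case using the structure of $\p{\partial^\infty X}^+=X^+\cap X^-$ and the mirror map $\tau^-$.

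For Case 1, I would argue by contradiction that $\left|T\cap\p{\partial^\infty X}^+\right|\le 1$. If $T$ met $\p{\partial^\infty X}^+$ in two distinct points, then $T$ and its mirror image $\tau^-(T)$ would share at least two points; since each is cyclicly connected, Proposition \ref{prop_cycl_con_trm} forces their union to be cyclicly connected as well. The crucial point is that $T\neq\tau^-(T)$: otherwise $T$ would lie inside $X^+\cap X^-=\p{\partial^\infty X}^+$, which by Proposition \ref{prop_lim_cact} is a union of $1$-cactoids and hence at most one-dimensional, impossible for the closed surface $T$. Thus $T$ would be a proper subset of a cyclicly connected set, contradicting its maximality in $2X$ via Lemma \ref{lem_conj}.

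For Case 2, Lemma \ref{lem_conj} places the cyclicly connected set $T$ inside a maximal cyclic subset $S\subset 2X$, again a closed surface. I would then cut $S$ along $\p{\partial^\infty X}^+$ and let $V$ be the closure of a connected component of $S\setminus\p{\partial^\infty X}^+$ lying in $X^+$. Here Lemma \ref{lem_b_hom_S} is decisive: it shows $S$ meets each connected component of $\partial^\infty X$ either in a single point or in a full $1$-sphere, so that $V$ is a compact surface whose boundary is assembled from these circles, with at worst finitely many additional isolated contact points. Doubling, the set $W\coloneqq V\cup\tau^-(V)$ is cyclicly connected and, by Lemmas \ref{lem_Peano} and \ref{lem_conj}, contained in $S$; since $S$ is a closed surface this forces the isolated points to vanish and $W=S$. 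Consequently $V=X^+\cap S$ is cyclicly connected and contains $T$, so $T=V$ by maximality, which is exactly Case 2 with $\partial T=T\cap\p{\partial^\infty X}^+$. Reading the same $V$-construction starting from an arbitrary maximal cyclic subset of $2X$ that is not maximal in $X^+$ or $X^-$ yields the final assertion.

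I expect the main obstacle to be the geometric control in Case 2, namely verifying that $S$ meets the limiting boundary $\p{\partial^\infty X}^+$ only in isolated points and full circles, and then eliminating those stray isolated intersection points so that $V$ is genuinely a compact surface with $\partial V=T\cap\p{\partial^\infty X}^+$ rather than some wilder set. This is precisely where the regularity hypothesis does the real work, through the closed-surface structure of the maximal cyclic subsets of $2X$ (Proposition \ref{prop_lim_reg_closed}) and the $1$-cactoid description of the limit boundary (Proposition \ref{prop_lim_cact} together with Lemma \ref{lem_b_hom_S}); without regularity these intersections could degenerate and $V$ would not need to be a surface at all.
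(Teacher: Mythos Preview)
Your proposal is correct and follows essentially the same route as the paper: split on whether $T$ is maximal cyclic in $2X$, use Proposition~\ref{prop_cycl_con_trm} together with $T\neq\tau^-(T)$ for the contradiction in Case~1, and in Case~2 take $V$ to be the closure of a component of $S\setminus\p{\partial^\infty X}^+$, invoke Lemma~\ref{lem_b_hom_S} to see $V$ is a compact surface with possible stray contact points, then use cyclic connectedness of $V\cup\tau^-(V)$ and the closed-surface structure of $S$ to eliminate those points and conclude $T=V=X^+\cap S$. Your justification of $T\neq\tau^-(T)$ via $T\subset\p{\partial^\infty X}^+$ being at most one-dimensional is exactly the content the paper compresses into the reference to Proposition~\ref{prop_lim_cact}.
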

\noindent
It follows the main result of this subsection:
\begin{prop}\label{prop_reg_lim}
Let $\seq{X}{n}$ be a regular sequence in $\mathcal{S}\p{c,b}$ where $b>0$ and $c+b>1$. Further let $X\in\mathcal{M}$ with $\conv{X_n}{X}$. Then $X$ is a compact length space satisfying the following properties:
\begin{itemize}
\item[1)] All but one maximal cyclic subset are homeomorphic to the 2-sphere or the 2-disc and one maximal cyclic subset is homeomorphic to $X_n$ for almost all $n\inN$.
\item[2)] $X$ is a generalized cactoid with $b$ boundary components and $\partial^\infty X\subset \partial X$. 
\end{itemize}
\end{prop}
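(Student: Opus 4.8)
The plan is to pass to the double $2X$ and reduce everything to the already-understood closed case. First I would record that $X$ is a compact length space by the stability of the length-space property under Gromov-Hausdorff convergence. Since $X_n\in\mathcal{S}\p{c,b}$, the closed surface $2X_n$ has connectivity number $2c+2b-2$, which is positive because $c+b>1$, and $\seq{2X}{n}$ is regular by the remark following the definition of regularity. Hence Proposition \ref{prop_lim_reg_closed} applies to the doubled sequence and yields a unique maximal cyclic subset $T_0\subset 2X$ homeomorphic to $2X_n$ for almost all $n$, while every other maximal cyclic subset of $2X$ is homeomorphic to the 2-sphere. As $2X_n$ has positive connectivity number, $T_0$ is not a sphere.

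Next I would locate $T_0$ relative to the splitting $2X=X^+\cup X^-$. I claim $T_0$ is a maximal cyclic subset of neither $X^+$ nor $X^-$: were it one of $X^+$, its isometric image $\tau^-\p{T_0}\subset X^-$ would be a second maximal cyclic subset of $2X$ homeomorphic to $2X_n$, distinct from $T_0$ because a shared point would force $T_0\subset X^+\cap X^-=\p{\partial^\infty X}^+$, contradicting two-dimensionality; this violates uniqueness. By the last statement of Lemma \ref{lem_reg_max}, $T_0$ therefore arises as in its second case, so $T\coloneqq T_0\cap X^+$ is a maximal cyclic subset of $X^+$, a compact surface with $\partial T=T\cap\p{\partial^\infty X}^+$, and $T_0$ is the topological double of $T$. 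The remaining maximal cyclic subsets of $X^+$ are, again by Lemma \ref{lem_reg_max}, either 2-spheres (those contained in $X^+$) or halves of symmetric 2-spheres; the latter are 2-discs, being the only compact surfaces whose double is the 2-sphere. Transporting through the fixed isometry between $X$ and $X^+$, this establishes the first statement once $T$ is shown to be homeomorphic to $X_n$.

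The heart of the argument, and the step I expect to be the main obstacle, is to show that $\partial T$ consists of exactly $b$ circles. Writing $C_1,\ldots,C_b$ for the connected components of $\partial^\infty X$, Lemma \ref{lem_b_hom_S} gives that each $T_0\cap C_i^+=T\cap C_i^+$ is a single point or a single 1-sphere. A single point is impossible: it would lie on a boundary circle of $T$, and that circle, being connected and contained in the disjoint union $\partial T=\bigcup_i\p{T\cap C_i^+}$, must lie in one $C_i^+$ and hence cannot reduce to a point. Thus $\partial T$ has at most $b$ circles, one for each $C_i$ it meets. For $b>1$, Lemma \ref{lem_con_inter} supplies, for every $i$, a maximal cyclic subset of $2X$ meeting $C_i^+$ and a further component; since a 2-sphere meets $\p{\partial^\infty X}^+$ either in at most a point (the $X^+$-case of Lemma \ref{lem_reg_max}) or in a single circle inside one component (the disc-half case), the only maximal cyclic subset meeting two components is $T_0$, so $T$ meets every $C_i^+$ and $\partial T$ has exactly $b$ circles. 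For $b=1$ the second case of Lemma \ref{lem_reg_max} already forces $\partial T\neq\emptyset$, whence $\partial T=T\cap C_1^+$ is a single circle. Combined with the equalities of Euler characteristic and orientability read off from $2T\cong 2X_n$, the classification of compact surfaces yields $T\cong X_n$.

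Finally I would verify the second statement. The first condition of Definition \ref{def_gen} holds because every maximal cyclic subset of $X$ is a compact surface and only $T$ is neither a sphere nor a disc. For the second condition I would check that $C_1^+,\ldots,C_b^+$ is a pre-boundary: they are disjoint, each contains the boundary circle $T\cap C_i^+$, and they are admissible since every maximal cyclic subset meets a fixed $C_i^+$ in at most one point (spheres), in its boundary circle (discs, by the second case of Lemma \ref{lem_reg_max}), or in the single circle $T\cap C_i^+$ ($T$ itself). Because an admissible subcontinuum meets $T$ in at most one boundary component, any pre-boundary requires at least $b$ subcontinua to cover $\partial T$; hence $\bigcup_i C_i^+$ is a minimal pre-boundary with $b$ components. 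By Lemma \ref{lem_boundary} it is contained in $\partial X$, so $X$ is a generalized cactoid with $b$ boundary components and $\partial^\infty X=\bigcup_i C_i^+\subset\partial X$.
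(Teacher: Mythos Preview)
Your proof is correct and follows essentially the same route as the paper's: pass to the double, apply Proposition~\ref{prop_lim_reg_closed} to isolate the unique non-spherical maximal cyclic subset $T_0\subset 2X$, use Lemma~\ref{lem_reg_max} to cut it back to $T=T_0\cap X^+$, count the boundary circles of $T$ via Lemmas~\ref{lem_b_hom_S} and~\ref{lem_con_inter}, and then verify that the components of $\partial^\infty X$ form a minimal pre-boundary. You spell out several steps the paper leaves implicit (the explicit connectivity number $2c+2b-2$ of $2X_n$, the exclusion of $T_0\subset X^+$, the admissibility check), but the architecture is identical; one small wording slip is the clause ``a shared point would force $T_0\subset X^+\cap X^-$''---what you actually need there is that \emph{equality} $\tau^-\p{T_0}=T_0$ (the negation of distinctness) forces this containment, not merely a shared point.
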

\begin{proof} From Proposition \ref{prop_lim_reg_closed} and Lemma \ref{lem_reg_max} we get the following: All but one maximal cyclic subset $T\subset X$ are homeomorphic to the 2-sphere or the 2-disc. Moreover $T$ is a compact surface with non-empty boundary whose connectivity number is equal to $c+b$. We also have that $T$ is orientable if and only if $X_n$ is orientable for almost all $n\inN$.\\
By regularity $\partial^\infty X$ has $b$ connected components. Combining Lemma \ref{lem_con_inter} and Lemma \ref{lem_reg_max}, we derive that $T$ has $b$ boundary components. Hence the reduced connectivity number of $T$ is equal to $c$. This yields that $T$ is homeomorphic to $X_n$ for almost all $n\inN$.\\
Moreover the connected components of $\partial^\infty X$ are disjoint subcontinua of $X$. Due to Lemma \ref{lem_b_hom_S} and Lemma \ref{lem_reg_max} the subcontinua are admissible and they cover the maximal cyclic subsets of $X$. Therefore $X$ is a generalized cactoid.\\
Since $T$ has $b$ boundary components, the pre-boundary $\partial^\infty X$ is minimal. We conclude that $X$ has $b$ boundary components and $\partial^\infty X\subset \partial X$.
\end{proof}
\subsection{The General Case}
\noindent
The aim of this subsection is to prove that the first statement of the \hyperlink{Main Theorem}{Main Theorem} implies the second.\\
We already described the limits of closed length surfaces and regular sequences. Now we investigate non-regular sequences: Let $\seq{X}{n}$ be a non-regular sequence in $\mathcal{S}\p{c,b}$ where $b>0$.  Further let $X\in\mathcal{M}$ with $\conv{X_n}{X}$.\\
By non-regularity we may assume that there is a sequence $\seq{\gamma}{n}$ of simple arcs or simple Jordan curves as in Lemma \ref{lem_non_reg}. Since the diameters of the curves vanish, the metric quotients $X_n/\gamma_n$ converge to $X$. From Proposition \ref{prop_clas_arc}, Proposition \ref{prop_clas_arc_II} and Proposition \ref{prop_clas_Jord} we get a topological description of these quotient spaces. Using this description, we derive the following two results:
\begin{lem}\label{lem_arcs} If the curves of the sequence $\seq{\gamma}{n}$ are simple arcs, then one of the following cases applies:
\begin{itemize}
\item[1)] There are $c_1,c_2\in\mathbb{N}_{\ge 2}$ with $c_1+c_2=c+1$ and a sequence $\p{Y_{i,n}}_{n\inN}$ in $\mathcal{S}\p{c_i}$ converging to some $Y_i\in\mathcal{M}$ such that $X$ is isometric to a metric wedge sum of $Y_1$ and $Y_2$. Furthermore the wedge point lies in $\partial^\infty Y_1\cap \partial^\infty Y_2$ and we find a corresponding isometry $p$ such that $p\p{\partial^\infty Y_1\cup \partial^\infty Y_2}=\partial^\infty X$.\\ Provided $X_n$ is non-orientable for infinitely many $n\inN$, the surfaces of at least one of the sequences may be chosen to be non-orientable.
\item[2)] There is a sequence $\seq{Y}{n}$ in $\mathcal{S}\p{c-1}$ converging to some $Y\in \mathcal{M}$ such that $X$ is isometric to $Y$ or a metric 2-point identification of it.  Furthermore the glued points lie in $\partial^\infty Y$ and we find a corresponding isometry or projection map $p$ such that $p\p{\partial^\infty Y}=\partial^\infty X$. 
\end{itemize}
If $X_n$ is orientable for infinitely many $n\inN$, then the surfaces of the sequences above may be chosen to be orientable.
\end{lem}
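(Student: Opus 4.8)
The plan is to split according to whether $\gamma_n$ is separating or not. Since there are only these two possibilities, I would first pass to a subsequence along which all $\gamma_n$ are of one fixed type. Throughout I would exploit that $\mathrm{diam}(\gamma_n)\to 0$ makes the collapsing maps $X_n\to X_n/\gamma_n$ into $\mathrm{diam}(\gamma_n)$-isometries; hence $X_n/\gamma_n\to X$, and the limit of $\partial X_n$ coincides with the limit of its image in $X_n/\gamma_n$. This reduces the problem to analysing the quotients $X_n/\gamma_n$, whose topology is supplied by the classification results.

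For a separating arc, Proposition \ref{prop_clas_arc} exhibits $X_n/\gamma_n$ as a wedge sum of compact surfaces $Y_{1,n}$ and $Y_{2,n}$ joined at the image $w_n$ of $\gamma_n$, with $w_n\in\partial Y_{1,n}\cap\partial Y_{2,n}$ and with the connectivity numbers prescribed there; for a non-separating arc, Proposition \ref{prop_clas_arc_II} exhibits it as a topological $2$-point identification of a single surface $Y_n$ at two boundary points mapping to $w_n$. I would give each piece the length metric it inherits. In the wedge case $Y_{1,n},Y_{2,n}$ are genuine subsets of $X_n/\gamma_n$ and $w_n$ is a cut point, so a geodesic between two points of one piece cannot leave it; the inherited metric is therefore the intrinsic one, each piece is a compact length surface of the prescribed connectivity number (thus $Y_{1,n},Y_{2,n}$ lie in the relevant $\mathcal S(c_i)$, respectively $Y_n\in\mathcal S(c-1)$), and $X_n/\gamma_n$ is isometric to the metric wedge sum of the two pieces. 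Passing to a further subsequence (using Notation \ref{nota_doub} for the piece-boundaries), I would arrange that the pieces converge, that their boundaries $\partial Y_{i,n}$ converge to $\partial^\infty Y_i$, and that a fixed piece carries any non-orientability; the orientability assertions are then immediate from the two propositions.

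The delicate step is transferring the gluing structure to the limit. After arranging $w_n\to w$, the crucial observation in the wedge case is that $w$ still separates the pieces: if $a_n\in Y_{1,n}$ and $a_n'\in Y_{2,n}$ satisfy $d(a_n,a_n')\to 0$, then every path joining them meets $w_n$, whence $d(a_n,w_n)\le d(a_n,a_n')\to 0$ and the common limit is $w$. Thus $Y_1\cap Y_2=\{w\}$, distances add across $w$, and $X$ is isometric to the metric wedge sum of $Y_1$ and $Y_2$; moreover $w\in\partial^\infty Y_1\cap\partial^\infty Y_2$, and by the boundary bookkeeping above $\lim\partial X_n=\partial^\infty X$ is carried by the gluing map onto $p(\partial^\infty Y_1\cup\partial^\infty Y_2)$. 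The non-separating case is handled in parallel through the continuity of metric $2$-point identifications under convergence: one shows the un-glued surfaces $Y_n$ subconverge to some $Y\in\mathcal M$ and that $Y_n/(p_n\sim q_n)\to Y/(p\sim q)$, where the degenerate case $p=q$ yields the alternative that $X$ is isometric to $Y$. I expect the main obstacle to be exactly this passage to the limit: ruling out that the metric collapse merges the halves along more than $\{w\}$, and—in the non-separating case—securing convergence of the un-glued pieces $Y_n$, which are not subspaces of the convergent sequence $X_n/\gamma_n$.
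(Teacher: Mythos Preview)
Your proposal is correct and follows exactly the approach the paper indicates: the paper's entire argument is the paragraph preceding the lemma, namely that $\mathrm{diam}(\gamma_n)\to 0$ forces $X_n/\gamma_n\to X$ and that Propositions~\ref{prop_clas_arc} and~\ref{prop_clas_arc_II} supply the topological structure of the quotients, from which the lemma is ``derived''. You have simply spelled out the metric details the paper suppresses (why the pieces carry their intrinsic metrics, why the wedge point survives as a single point in the limit, and how convergence of the unglued $Y_n$ is secured), and your identification of these as the genuine work is accurate.
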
      
\begin{lem}\label{lem_Jor}
If the curves of the sequence $\seq{\gamma}{n}$ are simple Jordan curves, then one of the following cases applies:
\begin{itemize}
\item[1)] There are $c_1,c_2\inN$ with $c_1+c_2=c$ and a sequence $\p{Y_{i,n}}_{n\inN}$ in $\mathcal{S}\p{c_i}$ converging to some $Y_i\in\mathcal{M}$ such that $X$ is isometric to a metric wedge sum of $Y_1$ and $Y_2$. Furthermore we find a corresponding isometry $p$ such that $p\p{\partial^\infty Y_1\cup \partial^\infty Y_2}=\partial^\infty X$.\\
Provided $X_n$ is non-orientable for infinitely many $n\inN$, the surfaces of at least one of the sequences may be chosen to be non-orientable. 
\item[2)] There is a sequence $\seq{Y}{n}$ in $\mathcal{S}\p{c-2}$ converging to some $Y\in\mathcal{M}$ such that $X$ is isometric to $Y$ or a metric 2-point identification of it. Furthermore we find a corresponding isometry or projection map $p$ such that $p\p{\partial^\infty Y}=\partial^\infty X$.
\item[3)] There is a sequence $\seq{Y}{n}$ in $\mathcal{S}\p{c-1}$ converging to some $Y\in\mathcal{M}$ such that $X$ is isometric to $Y$. Furthermore we find a corresponding isometry $p$ such that $p\p{\partial^\infty Y}=\partial^\infty X$.
\end{itemize}
If $X_n$ is orientable for infinitely many $n\inN$, then always one of the first two cases applies and the surfaces of the corresponding sequences may be chosen to be orientable.
\end{lem}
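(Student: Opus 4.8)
The plan is to transfer the topological classification of Proposition \ref{prop_clas_Jord} to the metric setting and then pass to Gromov-Hausdorff limits. For each $n$ the curve $\gamma_n$ is a non-contractible simple Jordan curve in $X_n$, so Proposition \ref{prop_clas_Jord} describes the topological quotient $X_n/\gamma_n$ and places it into one of its three cases. After passing to a subsequence I may assume that the same case occurs for every $n\inN$, and since $\conv{diam\p{\gamma_n}}{0}$ the metric quotients $X_n/\gamma_n$ still converge to $X$. Throughout I fix isometric embeddings of the spaces $X_n/\gamma_n$ into a common compact metric space in which the Hausdorff limit of the images is $X$, and I pass to a further subsequence so that all relevant subsets -- the topological pieces, the wedge or identification points, and the images of $\partial X_n$ -- converge in the Hausdorff sense.

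First I upgrade the topological decompositions to metric ones. In the wedge case I write $X_n/\gamma_n$ as the topological wedge of $S_{1,n}$ and $S_{2,n}$ and equip each summand with the induced length metric; since any path joining the two summands must pass through the wedge point, shortest paths between points of a single summand stay inside it, so the induced length metric agrees with the restriction of the metric of $X_n/\gamma_n$ and the latter is isometric to the metric wedge sum of the resulting length surfaces $Y_{1,n}\in\mathcal{S}\p{c_1}$ and $Y_{2,n}\in\mathcal{S}\p{c_2}$. In the identification case I separate the singular point of $X_n/\gamma_n$ into the two points being glued and equip the resulting surface $Y_n\in\mathcal{S}\p{c-2}$ with the induced length metric, so that $X_n/\gamma_n$ is a metric 2-point identification of $Y_n$; the third case needs no upgrade since there $Y_n\coloneqq X_n/\gamma_n\in\mathcal{S}\p{c-1}$ already carries a length metric.

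Next I identify the limits. Since all pieces are closed subsets of a Hausdorff-convergent sequence, after a further subsequence $Y_{i,n}$ converges to some $Y_i$ (respectively $Y_n$ to some $Y$) in the Hausdorff sense, and the restricted metrics converge, so that $Y_i$ (respectively $Y$) is the Gromov-Hausdorff limit of the corresponding sequence in the appropriate surface class. In the wedge case the wedge points converge to a single point $w$, and the identity $d\p{p,w}+d\p{w,q}=d\p{p,q}$ for $p\in Y_{1,n}$ and $q\in Y_{2,n}$ passes to the limit; this forces $Y_1\cap Y_2=\cp{w}$ and shows that the metric of $X$ is the wedge metric, so $X$ is isometric to the metric wedge sum of $Y_1$ and $Y_2$. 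In the identification case the two glued points converge to points $p^1,p^2\in Y$, and taking the limit of the explicit formula for the identification distance shows that $X$ is isometric to the metric 2-point identification of $Y$ when $p^1\neq p^2$ and isometric to $Y$ when $p^1=p^2$. The boundary statement follows by tracking $\partial X_n$: its image in $X_n/\gamma_n$ equals the image of the boundaries of the pieces up to the curve $\gamma_n$ of vanishing diameter, whence in the limit $p\p{\partial^\infty Y_1\cup\partial^\infty Y_2}=\partial^\infty X$ (respectively $p\p{\partial^\infty Y}=\partial^\infty X$). The orientability assertions are read off from the corresponding clauses of Proposition \ref{prop_clas_Jord}; in particular the third case forces $X_n$ to be non-orientable, so if $X_n$ is orientable for infinitely many $n\inN$ only the first two cases survive and the pieces may be chosen orientable.

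I expect the main obstacle to be this last identification step: showing that the Hausdorff limits of the pieces meet in exactly one point and that the metric of $X$ is recovered as the wedge, respectively identification, metric of the limit pieces rather than a metric with further identifications. The distance-through-the-singular-point argument together with the convergence of the explicit quotient-distance formulas is what rules this out, and the same bookkeeping -- following where $\partial X_n$ goes under the collapse and the splitting -- is what yields the precise description of $\partial^\infty X$.
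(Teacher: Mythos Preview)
Your proposal is correct and follows the same route as the paper: collapse $\gamma_n$, apply the topological classification of Proposition~\ref{prop_clas_Jord} to $X_n/\gamma_n$, upgrade the resulting wedge/identification description to the metric category, and pass to Gromov--Hausdorff limits while tracking $\partial X_n$. The paper itself gives no further detail than the two sentences preceding the lemma, so you have in fact supplied the argument the paper only sketches; the one point you might make more explicit is that after passing to a subsequence the integers $c_1,c_2$ in the wedge case are constant in $n$, which is needed to land in fixed classes $\mathcal{S}\p{c_i}$.
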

\noindent
The final result of this section refines a statement of the \hyperlink{Main Theorem}{Main Theorem}:
\begin{trm}\label{trm_main_first}
Let $\seq{X}{n}$ be a sequence in $\mathcal{S}\p{c}$ and $X\in\mathcal{M}$ with $\conv{X_n}{X}$. Then there are $k,k_0\inN_0$ and a space $Y\in\mathcal{G}\p{c_0}$, where $c_0\le c+k_0-2k$, such that the following statements apply:
\begin{itemize}
\item[1)] $X$ can be obtained by a successive application of $k$ metric 2-point identifications to $Y$ such that $k_0$ of them are boundary identifications.
\item[2)] If $X_n$ is orientable for infinitely many $n\inN$, then the maximal cyclic subsets of $Y$ are orientable.
\item[3)] If $X_n$ is non-orientable for infinitely many $n\inN$ and the maximal cyclic subsets of $Y$ are orientable, then $c_0<c$. 
\end{itemize}
\end{trm}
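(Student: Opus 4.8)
The plan is to prove Theorem \ref{trm_main_first} by strong induction on the connectivity number $c$, combining the closed-surface result (Theorem \ref{trm_lim_closed}), the regular limit description (Proposition \ref{prop_reg_lim}), and the non-regular decomposition lemmas (Lemma \ref{lem_arcs} and Lemma \ref{lem_Jor}). The base cases are the situations where the limit is already a generalized cactoid with no further identifications needed, and the induction hypothesis is that the theorem holds for every sequence in $\mathcal{S}(c')$ with $c'<c$.

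First I would dispose of the cases already handled. If the boundary of $X_n$ is empty, then $X_n$ is a closed surface and Theorem \ref{trm_lim_closed} directly yields a geodesic generalized cactoid $Y$ with empty boundary, connectivity number at most $c-2k$, together with orientability statements 2) and 3); here $k_0=0$ since $\partial Y=\emptyset$. If the sequence is regular (and $c>0$, so that $c+b>1$ after writing $c=c_0+b$ for the reduced connectivity number $c_0$ and boundary count $b$), then Proposition \ref{prop_reg_lim} shows $X$ is itself a generalized cactoid in $\mathcal{G}(c)$ with $b$ boundary components, so we may take $Y=X$ and $k=k_0=0$; orientability of the single non-spherical, non-disc maximal cyclic subset is recorded in the proof of Proposition \ref{prop_reg_lim}, giving 2) and 3).

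The heart of the argument is the non-regular case. Here I would pass to a subsequence so that Lemma \ref{lem_non_reg} produces a collapsing sequence $\seq{\gamma}{n}$ of simple arcs or simple Jordan curves, and then invoke Lemma \ref{lem_arcs} or Lemma \ref{lem_Jor} to express $X$ as a metric wedge sum of two limit spaces $Y_1,Y_2$, or as an isometric copy of a single limit space $Y$, or as a metric 2-point identification of such a $Y$. In each subcase the auxiliary sequences lie in $\mathcal{S}(c_i)$ or $\mathcal{S}(c')$ with strictly smaller connectivity number, so the induction hypothesis applies to each factor. For a wedge-sum outcome I would combine the two geodesic generalized cactoids obtained from $Y_1$ and $Y_2$ into a single geodesic generalized cactoid by forming their metric wedge at the limit of the wedge points (which lies in $\partial^\infty Y_1\cap\partial^\infty Y_2$, so the two boundaries merge into one admissible subcontinuum, keeping property 2) of Definition \ref{def_gen}); the connectivity numbers add, and the arithmetic $c_1+c_2=c+1$ from Proposition \ref{prop_clas_arc} against the budget $c_0\le c+k_0-2k$ must be tracked carefully. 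For a 2-point-identification outcome I would append one more identification to the data supplied by the induction hypothesis, incrementing $k$ (and, since the glued points lie in $p(\partial^\infty Y)\subset\partial Y$, incrementing $k_0$), and check that the bound $c_0\le c+k_0-2k$ survives: a non-boundary identification comes from the case $c'=c-2$ in Lemma \ref{lem_Jor}, costing $2$ in connectivity while raising $k$ by $1$, whereas a boundary identification comes from $c'=c-1$, costing $1$ while raising both $k$ and $k_0$ by $1$ — precisely matching the $+k_0-2k$ correction.

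\textbf{The main obstacle I expect is bookkeeping the connectivity-number inequality through the recursion while simultaneously preserving the orientability dichotomy.} Each application of Lemma \ref{lem_arcs}, Lemma \ref{lem_Jor}, or Theorem \ref{trm_lim_closed} shifts $c$, $k$, and $k_0$ by different amounts in different subcases, and one must verify that the inductive bound $c_0\le c'+k_0'-2k'$ for the subproblem propagates to $c_0\le c+k_0-2k$ for the original; the wedge-sum case is the delicate one because the two budgets must be added and the $+1$ from $c_1+c_2=c+1$ absorbed without slack. For orientability, I would track that statement 2) is preserved because all three lemmas allow the auxiliary surfaces to be chosen orientable when $X_n$ is orientable for infinitely many $n$, and that statement 3) follows by contraposition: if infinitely many $X_n$ are non-orientable yet $Y$ is built from orientable maximal cyclic subsets, then at each decomposition step the non-orientability was absorbed into a strict drop of the connectivity budget (the final clauses of Proposition \ref{prop_clas_arc}, Proposition \ref{prop_clas_Jord}, and Theorem \ref{trm_lim_closed} part 3), forcing $c_0<c$. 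A minor technical point is ensuring that the generalized cactoid $Y$ can be taken geodesic throughout; this follows from Proposition \ref{prop_Peano_length} together with the fact that the constructions in Lemma \ref{lem_arcs} and Lemma \ref{lem_Jor} are realized as metric limits, so the limit spaces are already compact length spaces.
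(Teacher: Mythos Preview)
Your strategy matches the paper's: induction on $c$, with the closed and regular cases as base, and decomposition via Lemmas \ref{lem_arcs} and \ref{lem_Jor} in the non-regular case. The gap is that your induction hypothesis is too weak to close the boundary bookkeeping. When Lemma \ref{lem_arcs} case~2) yields a metric $2$-point identification of a limit $Y'$ with glued points in $\partial^\infty Y'$, you want to count this as a boundary identification and increment $k_0$. But by induction $Y'$ comes from a cactoid $Z$ via projections $q_{k'}\circ\cdots\circ q_1$, and ``boundary identification'' requires the glued points to lie in $(q_{k'}\circ\cdots\circ q_1)(\partial Z)$; your claim ``$p(\partial^\infty Y)\subset\partial Y$'' silently assumes $\partial^\infty Y'\subset(q_{k'}\circ\cdots\circ q_1)(\partial Z)$, which is not part of the statement you are inducting on. Without it the arithmetic fails by~$1$: induction gives $c_0\le(c-1)+k_0'-2k'$, but if the new identification cannot be counted as a boundary one you would need $c_0\le(c-2)+k_0'-2k'$. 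The same issue appears in the wedge case of Lemma \ref{lem_arcs}: you need the wedge point to lift into $\partial Z_i$ so that two boundary components merge and the $+1$ in $c_1+c_2=c+1$ is absorbed.

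The repair, and this is exactly what the paper does, is to strengthen the statement being proved by appending: \emph{there is a choice $p_1,\ldots,p_k$ of the projection maps with $\partial^\infty X\subset(p_k\circ\cdots\circ p_0)(\partial Y)$}. This holds in the base cases by Theorem \ref{trm_lim_disc}~3), Theorem \ref{trm_lim_closed}~1) (where $\partial Y=\emptyset=\partial^\infty X$), and Proposition \ref{prop_reg_lim}~2), and it propagates through the inductive step precisely because each of Lemmas \ref{lem_arcs} and \ref{lem_Jor} records that the isometry or projection $p$ satisfies $p(\partial^\infty Y_i)=\partial^\infty X$. A minor additional omission: Proposition \ref{prop_reg_lim} requires the total connectivity to exceed~$1$, so the regular disc case must be handled separately via Theorem \ref{trm_lim_disc}, as the paper does.
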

\begin{proof}
First we add a statement to the claim: There is a choice $p_1,\ldots,p_k$ of the corresponding projection maps such that $\partial^\infty X\subset \p{p_k\circ\ldots\circ p_0}\p{\partial Y}$ where $p_0\coloneqq id_Y$.\\
The proof proceeds by induction over the connectivity number:\\
In the case $c=0$ the claim directly follows by Theorem \ref{trm_lim_closed}.\\
We consider the case $c>0$. Moreover we assume that the claim is true if the connectivity number is less than $c$. If the sequence $\seq{X}{n}$ contains infinitely many closed surfaces or is regular, the claim follows from Theorem \ref{trm_lim_disc}, Theorem \ref{trm_lim_closed} and Proposition \ref{prop_reg_lim}. Hence we may assume that one of the cases in Lemma \ref{lem_arcs} or Lemma \ref{lem_Jor} applies.\\
We note that the surfaces of the sequences occurring there have a connectivity number less than $c$. Therefore an application of the induction hypothesis yields the claim.  
\end{proof}
\noindent
Finally we are able to prove the local description of the limit spaces:
\begin{proof}[Proof of Theorem \ref{trm_loc}] Due to Theorem \ref{trm_main_first} there is some generalized cactoid $Y$ such that $X$ is homeomorphic to a topological quotient of $Y$ whose underlying equivalence relation identifies only finitely many points. We denote the number of maximal cyclic subsets in $Y$ not being homeomorphic to the 2-sphere or the 2-disc by $k$.\\
First we show the following claim: Every $y\in Y$ admits an open neighborhood being homeomorphic to an open subset of some Peano space whose maximal cyclic subsets are homeomorphic to the 2-sphere or the 2-disc.\\
In the case $k=0$ the claim follows directly.\\
If $k=1$, then we may assume that $y$ lies in the maximal cyclic subset $T\subset Y$ not being homeomorphic to the 2-sphere or the 2-disc. There is a neighborhood $D$ of $y$ in $T$ being homeomorphic to the 2-disc. We denote the union of the connected components of $Y\setminus T$ whose closures intersect $D$ by $A$. It follows that $Z\coloneqq D\cup A$ is a Peano space whose maximal cyclic subsets are homeomorphic to the 2-sphere or the 2-disc.\\
Moreover there is an open neighborhood $V$ of $y$ in $T$ being contained in $D$. We denote the union of the connected components of $Y\setminus T$ whose closures intersect $V$ by $B$. Then $V\cup B$ is an open neighborhood of $y$ in $Y$ and $Z$. This yields the claim.\\
If $k\ge 2$, then $Y$ is a wedge sum of Peano spaces satisfying the following property: All maximal cyclic subsets are compact surfaces and less than $k$ of them are not homeomorphic to the 2-sphere or the 2-disc.\\
Provided both spaces locally look like Peano spaces whose maximal cyclic subsets are homeomorphic to the 2-sphere or the 2-disc, the same applies to $Y$. Hence the claim follows by induction.\\
By the claim $X$ locally looks like a successive wedge sum of Peano spaces whose maximal cyclic subsets are homeomorphic to the 2-sphere or the 2-disc. We note that such a successive wedge sum is then also a Peano space of this kind.
\end{proof}
\section{Approximation of Generalized Cactoids}\label{sec_appr}
In this chapter we show that the second statement of the \hyperlink{Main Theorem}{Main Theorem} implies the first. For this we successively reduce the complexity of the problem.
\subsection{Approximation by Surface Gluings}
The goal of this subsection is to approximate generalized cactoids by suitable spaces in $\mathcal{W}_0$. Our construction extends over the next three results: \begin{lem}
Let $X\in\mathcal{G}\p{c,b}$ and $\seq{T}{k}$ be an enumeration of its maximal cyclic subsets. Then $X$ can be obtained as the limit of compact length spaces $\seq{X}{n}$ satisfying the following properties: 
\begin{itemize}
\item[1)] $X_n$ has only finitely many maximal cyclic subsets. Moreover the maximal cyclic subsets of $X_n$ are in isometric one-to-one correspondence with $\cp{T_k}_{k=1}^n$.
\item[2)] After passing to a subsequence, we may assume the existence of a pre-boundary $\partial^\ast X_n\subset X_n$ with $b$ connected components such that $\conv{\partial^\ast X_n}{\partial X}$. 
\end{itemize}
\end{lem}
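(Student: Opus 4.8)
The plan is to realise each $X_n$ as a metric quotient of $X$ in which all maximal cyclic subsets of high index are crushed to points. Concretely, set $Z_n := \bigcup_{k>n} T_k$ and let $R_n$ be the closed equivalence relation on $X$ whose nontrivial classes are the connected components of $Z_n$; put $X_n := X/R_n$ with the quotient length metric and write $q_n\colon X\to X_n$ for the projection. As $X$ is a compact length space and $R_n$ is closed, $X_n$ is again a compact length space. Since the quotient only identifies points of $Z_n$, the classes meeting $T_1\cup\ldots\cup T_n$ are singletons, so $q_n$ is injective on each $T_i$ ($i\le n$); I would check that it is in fact isometric there by the excursion argument: every connected component of the complement of a maximal cyclic subset is attached along a single point (Lemma \ref{lem_Peano} 5)), so a path leaving $T_i$ must return through the same point and cannot shorten distances between points of $T_i$. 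The images $q_n(T_1),\ldots,q_n(T_n)$ are then exactly the maximal cyclic subsets of $X_n$ — the crushed components become cut points and the remaining part carries no Jordan curve off the $q_n(T_i)$ — which yields the isometric one-to-one correspondence in Property 1, and in particular their finiteness.

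For Property 2 I would take $\partial^{\ast}X_n := q_n(\partial X)$. Because $\partial X$ is a pre-boundary of $X$ (Lemma \ref{lem_boundary}), its image covers the boundary components of $q_n(T_1),\ldots,q_n(T_n)$, and admissibility is inherited under $q_n$ since the relation only collapses subcontinua meeting each maximal cyclic subset in at most a point (Lemma \ref{lem_adm_stab}); thus $\partial^{\ast}X_n$ is a pre-boundary with at most $b$ components. Passing to a subsequence along which $(\partial^{\ast}X_n)$ Hausdorff converges, its limit is forced to equal $\partial X$ by the convergence $q_n\to\mathrm{id}$ established below, and counting components then gives exactly $b$.

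The heart of the proof, and the step I expect to be the main obstacle, is the convergence $X_n\to X$. By Proposition \ref{prop_almost_isom} it suffices that the surjection $q_n$ have distortion tending to $0$; as $q_n$ is $1$-Lipschitz, this is the amount by which collapsing shortens distances, and one checks it is controlled by $D_n := \sup\{diam(W)\colon W\text{ a component of }Z_n\}$. The hard part is to prove $D_n\to 0$, which I would do by contradiction. If some component had $diam\ge\delta$, then, since it is a connected union of the $T_k$ with $k>n$ glued along single points (Lemma \ref{lem_Peano} 5), 6)), it would contain a geodesic $\gamma_n$ of length $\ge\delta$ lying entirely in $Z_n$. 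Using $diam(T_k)\to 0$ (Lemma \ref{lem_Peano} 4)) and passing to a subsequential limit $\gamma_n\to\gamma_\infty$, every point of $\gamma_\infty$ is a limit of points of the $T_k$ with index tending to infinity, so $\gamma_\infty$ meets no bead interior; moreover, as a limit of chains disjoint from any fixed $T_{k_0}$, it cannot run along a boundary $\partial T_{k_0}$ either, for that would furnish a second arc between two points of $T_{k_0}$ and hence a cyclic element properly containing $T_{k_0}$, contradicting maximality. Hence $\gamma_\infty$ meets each $T_k$ in at most isolated points, and an interior point $p$ of this arc lying in no maximal cyclic subset is, by the cyclic element structure of Peano spaces, a cut point of $X$ separating the endpoints of $\gamma_\infty$. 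For large $n$ the point $p$ then separates the endpoints of $\gamma_n$, so the connected set $\gamma_n$ must contain $p$; but $p$ lies in no $T_k$ and therefore outside $Z_n$, a contradiction. This gives $D_n\to 0$, whence $dis(q_n)\to 0$ and $X_n\to X$.
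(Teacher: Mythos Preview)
Your construction is precisely the one the paper uses: form $X_n$ by collapsing the connected components of $\bigcup_{k>n}T_k$ and take $q_n(\partial X)$ as the candidate pre-boundary. The paper, however, does not prove the convergence $X_n\to X$ or the description of the maximal cyclic subsets here at all; both facts are cited from \cite{Dot24} (where the same quotient is analysed in the boundaryless case, and the paper realises $X_n$ as $\lim_k X_{n,k}$ with $X_{n,k}$ obtained from the finite union $\bigcup_{m=n+1}^{n+k}T_m$). The only genuinely new content of the present proof is the verification that $q_n(\partial X)$ is a pre-boundary with $b$ components converging to $\partial X$.

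On that last point your sketch is too quick. Your appeal to Lemma~\ref{lem_adm_stab} does not fit: that lemma concerns Hausdorff limits of subcontinua meeting every maximal cyclic subset in at most one point, whereas here you need that the \emph{image} under $q_n$ of an admissible $C\subset\partial X$ is admissible in $X_n$. The paper argues this explicitly: if $q_n(C)\cap q_n(T_m)$ contains two points, an arc in $C$ joining suitable preimages combines with short geodesics to give an arc with endpoints in $T_m$, which is forced into $T_m$ by the single-attachment property of Lemma~\ref{lem_Peano}; hence the intersection coincides with $q_n(C\cap T_m)$.

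Your attempted proof of $D_n\to 0$ has the right shape, but the decisive step is not justified. First, the $\gamma_n$ are not ``disjoint from any fixed $T_{k_0}$'': they may meet $T_{k_0}$ at the countably many attachment points $T_k\cap T_{k_0}$, $k>n$, and Hausdorff limits of such sets can contain nondegenerate arcs in $T_{k_0}$. Second, the limit $\gamma_\infty$ of paths need not be an arc, so speaking of its ``interior points'' is premature. Your ``second arc'' heuristic points in the right direction but is not a proof. One way to repair the argument is to replace $\gamma_n$ by the minimal chain $P_n=T_{a_0}\cup\cdots\cup T_{a_m}\subset W_n$ of maximal cyclic subsets joining $x_n$ to $y_n$; the tree structure of the cyclic-element decomposition gives $|P_n\cap T_{k_0}|\le 1$ for every $k_0\le n$, and the arc argument behind Lemma~\ref{lem_adm_stab}(2) then yields $|K\cap T_{k_0}|\le 1$ for the Hausdorff limit $K$. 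Since $K$ is an uncountable continuum meeting $\bigcup_kT_k$ only countably, it contains a point $p$ in no $T_k$; such a $p$ is necessarily a cut point (otherwise a Jordan curve through $p$ would place it in some $T_k$), and your separation argument then concludes. So your plan is salvageable, but as written the core step is a genuine gap.
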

\begin{proof}
We extend the author's proof in \cite[p. 9]{Dot24}: First we define an equivalence relation $\sim$ on $X$ as follows: $x\sim y$ if and only if $x$ and $y$ lie in the same connected component of $\cup_{m=n+1}^{n+k}T_m$. Moreover we denote the corresponding metric quotient by $X_{n,k}$ and the corresponding projection map by $p_{n,k}$. Then we may assume that there is a space $X_n\in\mathcal{M}$ and a map $p_n\colon X\to X_n$ such that $\conv{X_{n,k}}{X_n}$ and $\conv{p_{n,k}}{p_n}$ uniformly.\\
From the reference we already know the following: The maximal cyclic subsets of $X_n$ are in isometric one-to-one correspondence with $\cp{T_k}_{k=1}^n$ via the map $p_n$. Further we have $\conv{X_n}{X}$.\\ 
Now we show that $p_n\p{\partial X}$ is a pre-boundary for infinitely many $n\inN$: Let $C$ be a connected component of $\partial X$. Since $p_n$ is continuous, $p_n(C)$ is a subcontinuum of $X_n$.\\
Further let $m\in\cp{1,\ldots,n}$ and $x_1,x_2\in p_n(C)\cap p_n\p{T_m}$ with $x_1\neq x_2$. Then there are $c_i\in C$ and $t_i\in T$ such that $p_n\p{c_i}=p_n\p{t_i}=x_i$. Moreover we choose a geodesic $\gamma_i\subset X$ between $c_i$ and $t_i$ and derive $p_n\p{\gamma_i}=\cp{x_i}$. Since $x_1\neq x_2$, the geodesics do not intersect. From Lemma \ref{lem_bound_cact} it follows that $C$ is arcwise connected and we find a non-degenerate arc $\alpha\subset C$ connecting $c_1$ and $c_2$. Because $\gamma_1$ and $\gamma_2$ do not intersect, we may assume that $\alpha$ intersects $\gamma_1\cup\gamma_2$ only twice. We derive that $\gamma\coloneqq \gamma_1\cup\alpha\cup\gamma_2$ is an arc and Lemma \ref{lem_Peano} implies $\gamma\subset T$. Finally we conclude $x_i\in p_n\p{T\cap C}$.\\
The observation above implies the following: If the subset $p_n(C)\cap p_n\p{T_m}$ is non-degenerate, then it equals $p_n\p{C\cap T_m}$. Because $C$ is admissible and $p_n$ is an isometry on $T_m$, we deduce that $p_n(C)$ is admissible.\\
Due to the fact that $C$ contains a boundary component of some maximal cyclic subset, we may assume that the same applies to $p_n(C)$. Moreover $p_n\p{\partial X}$ covers the boundary components of the maximal cyclic subsets as $\partial X$ does.\\
The map $p_n$ is 1-lipschitz. After passing to a subsequence, we hence may assume $\seq{p}{n}$ to be convergent. We denote its limit by $p$ and it follows that $p$ is an isometry. Further we may assume the sequence $\p{p_n\p{\partial X}}_{n\inN}$ to be convergent. Since $p\p{\partial X}=\partial X$, we have $\conv{p_n\p{\partial X}}{\partial X}$. Therefore we may assume that $\partial X$ has as many connected components as $p_n\p{\partial X}$. We finally deduce that $p_n\p{\partial X}$ is a pre-boundary with $b$ connected components.
\end{proof}
\begin{lem}
Let $X$ be a geodesic generalized cactoid having only finitely many maximal cyclic subsets. Further let $\partial^\ast X\subset X$ be a pre-boundary with $b$ connected components. Then $X$ can be obtained as the Hausdorff limit of compact subsets $\seq{X}{n}$ satisfying the following properties: 
\begin{itemize}
\item[1)] We have $X_n\in\mathcal{W}$ and the maximal cyclic subsets of $X_n$ are equal to those of $X$.
\item[2)] There is a pre-boundary $\partial^\ast X_n\subset X_n$ with $b$ connected components. 
\item[3)] The sequence $\seq{\partial^\ast X}{n}$ Hausdorff converges to $\partial^\ast X$.
\end{itemize}
\end{lem}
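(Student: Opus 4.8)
The plan is to leave the finitely many maximal cyclic subsets of $X$ untouched and to approximate the purely dendritic remainder by finite trees, cutting off the small hairs whose shrinking is guaranteed by Lemma \ref{lem_Peano}.

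First I would record the block structure of $X$. Let $T_1,\ldots,T_m$ be its maximal cyclic subsets; each is a non-degenerate cyclicly connected compact length space, since a compact surface has no separating point and Proposition \ref{prop_cycl_con_trm} applies. Each connected component of $X\setminus\bigcup_j T_j$ lies in a single component of $X\setminus T_i$, so by the fifth part of Lemma \ref{lem_Peano} its closure meets $T_i$ in at most one point; hence the closure $D$ of $X\setminus\bigcup_j T_j$ is a disjoint union of dendrites, each meeting every surface in at most one point, and the surfaces together with these dendrites form a tree of blocks. Because there are only finitely many surfaces, $D$ has a finite backbone joining them --- and, inside each connected component of $\partial^\ast X$, joining its finitely many boundary circles --- while the third part of Lemma \ref{lem_Peano} forces the remaining hairs to have diameters tending to zero.

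Next I would build $X_n\subset X$. I always keep the finite backbone $B\subset D$; to it I add, for each of the finitely many hairs of diameter at least $\tfrac1n$, the finite subtree spanned by a finite $\tfrac1n$-net of it --- a finite tree, since a dendrite is uniquely arcwise connected --- and I arrange moreover that $D_n\cap\partial^\ast X$ is $\tfrac1n$-dense in the dendritic part of $\partial^\ast X$. The resulting $D_n$ is a finite union of finite metric trees, and I put $X_n\coloneqq\bigcup_i T_i\cup D_n$ and $\partial^\ast X_n\coloneqq\partial^\ast X\cap X_n$. Every omitted part is a dead-end hair attached at a single point and thus offers no shortcut, so geodesics of $X$ between points of $X_n$ remain in $X_n$; consequently the subspace metric of $X_n$ coincides with its intrinsic length metric, and $X_n$ is the successive metric wedge sum of the surfaces and the trees composing $D_n$, glued at single points. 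Hence $X_n\in\mathcal{W}$. As each tree of $D_n$ meets every surface in at most one point and carries no cycle, no cyclic subset is enlarged, so the maximal cyclic subsets of $X_n$ are precisely $T_1,\ldots,T_m$, that is, those of $X$.

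The remaining task --- and the point at which I expect the main obstacle --- is to verify that $\partial^\ast X_n$ is again a pre-boundary with exactly $b$ connected components and that the two Hausdorff limits hold. Coverage is immediate, since the boundary circles lie in the surfaces, hence in $X_n$. Admissibility survives the truncation: for a component $C$ of $\partial^\ast X$ and a surface $T$ one has $T\cap\p{C\cap X_n}=T\cap C$, because the full boundary circle is retained while the hairs of $C$ touch $T$ only in single points; thus this set is a point or a whole boundary component, and by Lemma \ref{lem_bound_cact} each retained piece stays a $1$-cactoid. Retaining $B$ inside every component preserves its connectedness and the total count $b$, so $\partial^\ast X_n$ is a pre-boundary with $b$ components. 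Finally, $X_n$ contains all surfaces and a $\tfrac1n$-dense subset of $D$, whence $\conv{X_n}{X}$; and since $D_n\cap\partial^\ast X$ was chosen $\tfrac1n$-dense in the dendritic part of $\partial^\ast X$ while all circles are retained, also $\conv{\partial^\ast X_n}{\partial^\ast X}$, which finishes the proof.
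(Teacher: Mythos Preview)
Your approach is essentially the same as the paper's: keep the finitely many maximal cyclic subsets, replace the dendritic remainder by finite subtrees, and set $\partial^\ast X_n\coloneqq\partial^\ast X\cap X_n$. The paper organizes the truncation slightly differently --- it first discards, for each surface $T$, the components of $X\setminus T$ of diameter below $\nicefrac{\varepsilon}{n}$ (with $\varepsilon$ the minimum surface diameter, so no surface is lost), obtaining a wedge sum $Y_n$ of the surfaces and finitely many compact metric trees $D_1,\ldots,D_k$, and then invokes \cite[p.~267]{BBI01} to replace each $D_i$ by a finite subtree $F_i$ within Hausdorff distance $\nicefrac{\varepsilon}{n}$, chosen to meet the same surfaces as $D_i$. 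Your backbone-plus-large-hairs construction amounts to the same thing. Where you are more explicit than the paper is in arguing that the omitted pieces are dead-end hairs, so that the subspace metric on $X_n$ is already intrinsic (hence $X_n$ is genuinely a metric wedge sum and lies in $\mathcal{W}$), and in checking that $\partial^\ast X\cap X_n$ remains admissible with $b$ components; the paper simply asserts these. Both arguments are correct.
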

\noindent
\begin{proof} We extend the author's proof in \cite[p. 9]{Dot24}: Let $\varepsilon$ be the minimum of the diameters of the maximal cyclic subsets of $X$. For every maximal cyclic subset $T\subset X$ we remove the connected components of $X\setminus T$ whose diameters are less than $\nicefrac{\varepsilon}{n}$. We denote the constructed subset by $Y_n$.\\
The subset $Y_n$ is a successive metric wedge sum of its maximal cyclic subsets and compact metric trees $D_1,\ldots, D_k$. For every metric tree $D_i$ there is a finite metric tree $F_i\subset D_i$ whose Hausdorff distance to $D_i$ is less than $\nicefrac{\varepsilon}{n}$ \comp{p. 267}{BBI01}. In particular we may assume that $F_i$ intersects the same maximal cyclic subsets as $D_i$. Next we define $X_n$ as the union of the maximal cyclic subsets of $Y_n$ and the finite metric trees. Moreover we set $\partial^\ast X_n\coloneqq \partial^\ast X\cap X_n$.\\
By construction we have $X_n\in\mathcal{W}$. Further the maximal cyclic subsets of $X_n$ are equal to those of $X$ and $\partial^\ast X_n$ is a pre-boundary of $X_n$ with $b$ connected components. Finally the sequences $\seq{X}{n}$ and $\seq{\partial^\ast X}{n}$ Hausdorff converge to $X$ and $\partial^\ast X$.
\end{proof}
\noindent
\begin{lem}
Let $X$ be a generalized cactoid in $\mathcal{W}$. Further let $\partial^\ast X\subset X$ be a pre-boundary with $b$ connected components. Then there is a sequence $\seq{X}{n}$ in $\mathcal{W}_0$ satisfying the following properties: 
\begin{itemize}
\item[1)] The maximal cyclic subsets of $X_n$ are in isometric one-to-one correspondence with the maximal cyclic subsets of $X$ and finitely many length spaces being homeomorphic to the 2-sphere or the 2-disc.
\item[2)] $X_n$ has $b$ boundary components. 
\item[3)] There is an $\varepsilon_n$-isometry $f_n\colon X_n\to X$ such that $f_n\p{\partial X_n}=\partial^\ast X$ and $\conv{\varepsilon_n}{0}$.
\end{itemize}
\end{lem}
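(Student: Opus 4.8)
The plan is to build each $X_n$ by replacing every finite metric tree in the wedge decomposition of $X$ by a thin length surface, following the thickening idea used for closed surfaces in \cite{Dot24} and adding the new ingredient needed to produce a boundary. First I would use the given presentation of $X$ as a successive metric wedge sum of its maximal cyclic subsets $T_1,\ldots,T_m$ (compact surfaces) and finitely many finite metric trees. After merging trees that are wedged to one another, I would arrange that each remaining maximal tree $F$ is either contained in $\partial^\ast X$ (a \emph{boundary tree}) or disjoint from it (an \emph{interior tree}); this is possible because $\partial^\ast X$ is a closed union of admissible $1$-cactoids, so its tree parts can be separated from the interior ones by cutting at the finitely many points where the two meet.

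The second step is the thickening. For an interior tree $F$ I would take the boundary of a regular neighbourhood of $F$ in $\mathbb{R}^3$ of radius $\delta_n\to 0$ with its intrinsic length metric; since $F$ is contractible this neighbourhood is a ball and its boundary is a length $2$-sphere $S^{(n)}$ with $\conv{S^{(n)}}{F}$. For a boundary tree $F$ I would instead take a planar regular neighbourhood, which is a length $2$-disc $D^{(n)}$ whose free boundary circle $\partial D^{(n)}$ is Hausdorff close to $F$ and satisfies $\conv{D^{(n)}}{F}$. In both cases I would mark on the new surface the finitely many points corresponding to the places where $F$ was wedged to the surfaces of $X$. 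The decisive feature here is that a sphere carries no boundary while the disc's free circle is forced into $\partial X_n$; this is exactly what lets interior trees vanish from the boundary and boundary trees reappear as genuine surface boundary.

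Next I would assemble $X_n$ by wedging the isometric copies of $T_1,\ldots,T_m$, the spheres $S^{(n)}$ and the discs $D^{(n)}$ at the marked points, reproducing the incidence pattern of $X$. Wherever more than two of these surfaces would meet at one point I would push the attachments apart to distinct nearby points at distance at most $\nicefrac{1}{n}$, replacing a star of identifications by a chain; since the incidence structure of a generalized cactoid is tree-like (Lemma \ref{lem_Peano}), this produces a genuine successive wedge sum in which every wedge point is shared by exactly two surfaces, so $X_n\in\mathcal{W}_0$, while moving points by at most $\nicefrac{1}{n}$. The maximal cyclic subsets of $X_n$ are then the copies of the $T_i$ together with the finitely many spheres and discs, which gives the first property.

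Finally I would read off the boundary and the approximating map. All boundary circles of the $T_i$ and all disc circles $\partial D^{(n)}$ lie in the sets obtained by thickening the $b$ components of $\partial^\ast X$, while the spheres contribute nothing; these $b$ thickened $1$-cactoids cover every boundary component of every maximal cyclic subset and stay pairwise disjoint, so they constitute $\partial X_n$ and yield the second property. I would take $f_n\colon X_n\to X$ to be the fixed isometry on each $T_i$ and a surjective, low-distortion collapse of each sphere or disc onto the corresponding tree $F$, matched at the perturbed wedge points. Its distortion is controlled by $\delta_n$ and the perturbation scale $\nicefrac{1}{n}$, hence tends to $0$, and since $f_n$ is onto it is an $\varepsilon_n$-isometry; by making the collapses exactly surjective onto the trees one obtains $f_n\p{\partial X_n}=\partial^\ast X$, the third property. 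The main obstacle I expect is precisely this boundary bookkeeping: choosing the thickenings (spheres versus discs) and the collapse maps so that $\partial X_n$ has exactly $b$ components and maps \emph{onto} $\partial^\ast X$, rather than merely Hausdorff close to it, while simultaneously keeping $X_n$ inside $\mathcal{W}_0$.
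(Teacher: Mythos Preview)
Your proposal is correct and follows essentially the same strategy as the paper: replace the one-dimensional pieces of the wedge decomposition by thin spheres or discs according to whether they lie in $\partial^\ast X$, then adjust so that at most two surfaces meet at each wedge point. The only differences are minor technical choices---you thicken whole trees via regular neighbourhoods while the paper first subdivides into intervals and uses abstract $\nicefrac{1}{n}$-isometries onto them, and you separate multi-way wedge points by perturbing attachments along a chain while the paper inserts additional tiny discs or spheres---but the underlying idea and the boundary bookkeeping are the same.
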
 
\begin{proof}
The space $X$ is a successive metric wedge sum of its maximal cyclic subsets and compact intervals. In particular we may assume that the wedge points do not lie in the interior of the intervals and that every interval whose interior intersects $\partial^\ast X$ lies in $\partial^\ast X$.\\
We consider the following construction: Let $e$ be one of the wedged intervals. Then there is a $\nicefrac{1}{n}$-isometry $f\colon D\to e$ satisfying the following properties: The preimages of the endpoints of $e$ contain exactly one point. If $e$ lies in $\partial^\ast X$, then $D$ is a length space being homeomorphic to the 2-disc and $f\p{\partial D}=e$. Otherwise $D$ is a length space being homeomorphic to the 2-sphere.\\
Now we remove the interior of $e$ from $X$ and paste $D$ along $f$. This yields a compact length space $Y$. Especially the maximal cyclic subsets of $Y$ are in isometric one-to-one correspondence with the set consisting of $D$ and the maximal cyclic subsets of $X$.\\
If $e$ lies in $\partial^\ast X$, then we set $\partial^\ast Y\coloneqq\p{\partial^\ast X\setminus e}\cup\partial D$. Otherwise we set $\partial^\ast Y\coloneqq \partial^\ast X$. We note that $\partial^\ast Y$ is a pre-boundary of $Y$ with $b$ connected components. Provided $Y$ is a successive metric wedge sum of its maximal cyclic subsets, we have $\partial^\ast Y=\partial Y$.\\ 
Furthermore the map $f$ naturally induces $\nicefrac{1}{n}$-isometry $g\colon Y\to X_n$ with $g\p{\partial^\ast Y}=\partial^\ast X$.\\
We successively repeat this construction until there is no wedged interval left and denote the constructed space by $X_n$.\\ 
The space $X_n$ is a successive metric wedge sum of its maximal cyclic subsets. For every wedge point $p\in X_n$ which lies in more than one maximal cyclic subset there is a $\nicefrac{1}{n}$-isometry $f\colon D\to \cp{p}$ satisfying the following properties: If $p$ lies in $\partial X_n$, then $D$ is a length space being homeomorphic to the 2-disc. Otherwise $D$ is a length space being homeomorphic to the 2-sphere.\\
Using a similar construction as above, we finally may assume that $X_n\in\mathcal{W}_0$.
\end{proof}
\noindent
We note that a sequence of  $\varepsilon_n$-isometries between converging spaces has a convergent subsequence provided $\conv{\varepsilon_n}{0}$. In particular its limit is an isometry between the limit spaces. Morever the boundary of a generalized cactoid is invariant under self-isometries. Combining Proposition \ref{prop_almost_isom} and the last three results, we hence get the desired approximating sequence:
\begin{cor}\label{cor_appr_G}
Let $X\in\mathcal{G}\p{c,b}$ and $\seq{T}{k}$ be an enumeration of its maximal cyclic subsets. Then $X$ can be obtained as the limit of spaces $\seq{X}{n}$ in $\mathcal{W}_0$ satisfying the following properties:
\begin{itemize}
\item[1)] The maximal cyclic subsets of $X_n$ are in isometric one-to-one correspondence with $\cp{T_k}_{k=1}^{n}$ and finitely many length spaces being homeomorphic to the 2-sphere or the 2-disc.
\item[2)] After passing to a subsequence, we may assume that $X_n$ has $b$ boundary components and $\conv{\partial X_n}{\partial X}$. 
\end{itemize}
\end{cor}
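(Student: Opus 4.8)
The plan is to chain the three lemmas preceding the statement by a diagonal argument, converting each of the various convergences into $\varepsilon$-isometries via Proposition \ref{prop_almost_isom} and reading the conclusion off the composed maps. Since the corollary is essentially a bookkeeping assembly of these three results, the work lies in tracking the maximal cyclic subsets and, above all, the boundaries through the composition.

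First I would extract the relevant $\varepsilon$-isometries from each lemma. The first lemma yields a sequence $\seq{Y}{m}$ of compact length spaces with $\conv{Y_m}{X}$ whose maximal cyclic subsets correspond isometrically to $\cp{T_k}_{k=1}^m$ and which, after passing to a subsequence, carry pre-boundaries $\partial^\ast Y_m$ with $b$ connected components satisfying $\conv{\partial^\ast Y_m}{\partial X}$; Proposition \ref{prop_almost_isom} then supplies $\delta_m$-isometries $Y_m\to X$ with $\conv{\delta_m}{0}$. For each fixed $m$ the second lemma approximates $Y_m$ in Hausdorff distance, hence in Gromov-Hausdorff distance, by spaces in $\mathcal{W}$ with the same maximal cyclic subsets and a pre-boundary of $b$ components converging to $\partial^\ast Y_m$. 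Finally, for each such space in $\mathcal{W}$ the third lemma produces a sequence in $\mathcal{W}_0$ together with explicit $\nicefrac{1}{\ell}$-isometries; these spaces add only finitely many length spheres and discs to the list of maximal cyclic subsets, possess $b$ actual boundary components, and map these boundaries onto the respective pre-boundary.

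Next I would perform the diagonal selection. For each $n$ I choose the index in the second lemma and then the index in the third lemma so large that the resulting space $X_n\in\mathcal{W}_0$ admits a composed $\varepsilon_n$-isometry $X_n\to X$, obtained by concatenating the map of the third lemma, the Hausdorff-approximation map of the second lemma, and the $\delta_n$-isometry $Y_n\to X$, whose distortion and net parameter are of order $\delta_n$. Then $\conv{\varepsilon_n}{0}$, so Proposition \ref{prop_almost_isom} gives $\conv{X_n}{X}$. Property 1) follows by composing the three correspondences of maximal cyclic subsets: the first lemma matches those of $Y_n$ with $\cp{T_k}_{k=1}^n$, the second lemma preserves them, and the third lemma adjoins finitely many spheres and discs. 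The first half of property 2), that each $X_n\in\mathcal{W}_0$ has $b$ boundary components, is recorded directly by the third lemma.

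The main obstacle is the convergence $\conv{\partial X_n}{\partial X}$ in the sense of the common-space embedding. Tracing the boundaries through the composed maps shows that the images of $\partial X_n$ lie $O(\delta_n)$-close to the pre-boundaries, which Hausdorff converge to $\partial^\ast Y_n$ and hence to $\partial X$; after passing to a further subsequence I may assume that $\seq{\partial X}{n}$ itself converges. The subtlety is that the $\varepsilon_n$-isometries identify the abstract limit with $X$ only up to a self-isometry, so a priori the limit of $\partial X_n$ could be a different pre-boundary. This is resolved by the observation noted before the statement that the boundary of a generalized cactoid is invariant under self-isometries: the identification of the limit with $X$ is therefore forced to carry $\lim\partial X_n$ to the canonical boundary $\partial X$, which yields $\conv{\partial X_n}{\partial X}$ and completes the proof.
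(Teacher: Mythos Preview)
Your proposal is correct and follows essentially the same approach as the paper: chain the three preceding lemmas via a diagonal argument, convert to $\varepsilon$-isometries using Proposition~\ref{prop_almost_isom}, and handle the boundary-convergence subtlety by invoking that the boundary of a generalized cactoid is invariant under self-isometries. The paper's own justification is the short paragraph immediately preceding the corollary, and your write-up simply makes those steps explicit.
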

\noindent 
\subsection{Elementary Surface Gluings}
Now we provide useful tools concerning the approximation of elementary surface gluings. We start with wedge sums: 
\begin{lem}\label{lem_wedge}
Let $S_1\in\mathcal{S}\p{c_1,b_1}$, $S_2\in\mathcal{S}\p{c_2,b_2}$ and $X$ be a metric wedge sum of $S_1$ and $S_2$. Then the following statements apply:
\begin{itemize}
\item[1)] There is a sequence $\seq{X}{n}$ in $\mathcal{S}\p{c_1+c_2,b_1+b_2}$ and an $\varepsilon_n$-isometry $f_n\colon X_n\to X$ such that $f_n\p{\partial X_n}=\partial X$ and $\conv{\varepsilon_n}{0}$. 
\item[2)] If the wedge point is contained in $\partial S_1\cap\partial S_2$, then there is a sequence $\seq{X}{n}$ in $\mathcal{S}\p{c_1+c_2,b_1+b_2-1}$ and an $\varepsilon_n$-isometry $f_n\colon X_n\to X$ such that $f_n\p{\partial X_n}=\partial X$ and $\conv{\varepsilon_n}{0}$. 
\end{itemize}
If $S_1$ and $S_2$ are orientable, then the surfaces of the sequence may be chosen to be orientable. Provided at least one of the wedged surfaces is non-orientable, the surfaces of the sequence may be chosen to be non-orientable.
\end{lem}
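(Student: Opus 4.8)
The plan is to realise the wedge sum $X$ as a degenerate limit of genuine surfaces obtained by joining $S_1$ and $S_2$ along a region that shrinks to the wedge point $p$. For the first statement the appropriate operation is a connected sum carried out at interior points approaching $p$, and for the second statement, where $p$ lies on boundary components $\beta_i\subset\partial S_i$, it is a boundary connected sum carried out along a short boundary arc through $p$. In both cases the gluing locus will have vanishing diameter, so that collapsing it to a point recovers $X$.

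Concretely, for the first statement I would fix a null sequence of radii, choose interior points $q_{i,n}\in S_i$ converging to $p$, remove small topological disks $D_{i,n}\ni q_{i,n}$, and let $X_n$ be the metric gluing of $S_1\setminus\mathrm{int}\,D_{1,n}$ and $S_2\setminus\mathrm{int}\,D_{2,n}$ along a homeomorphism $\partial D_{1,n}\cong\partial D_{2,n}$. The underlying space is a compact topological surface, which equipped with the induced length metric is a length surface. Its type follows from $\chi(X_n)=\chi(S_1)+\chi(S_2)-2$ together with the fact that an interior surgery leaves all $b_1+b_2$ boundary circles untouched; unwinding the definition of the reduced connectivity number yields $X_n\in\mathcal{S}(c_1+c_2,b_1+b_2)$. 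For the second statement I would instead identify a short arc $a_1\subset\beta_1$ through $p$ with a short arc $a_2\subset\beta_2$ through $p$; now $\chi(X_n)=\chi(S_1)+\chi(S_2)-1$ and the two circles $\beta_1,\beta_2$ fuse into a single boundary component, so the same computation gives $X_n\in\mathcal{S}(c_1+c_2,b_1+b_2-1)$.

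In either case I would define $f_n\colon X_n\to X$ to be the near-identity on $S_i\setminus\mathrm{int}\,D_{i,n}$ (respectively outside the glued arc) and to collapse the gluing locus onto $p$. Since only the small excised regions are missed, the image is an $\varepsilon_n$-net, and one can arrange the boundary to be matched exactly so that $f_n(\partial X_n)=\partial X$. The orientability clauses I would settle by choosing the gluing homeomorphism orientation-reversingly when both $S_i$ are orientable, so that $X_n$ inherits an orientation, and by observing that $X_n$ contains a copy of $S_1\setminus\mathrm{int}\,D_{1,n}$, which stays non-orientable whenever $S_1$ is, to obtain the non-orientable case.

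The main obstacle will be the distortion estimate $dis(f_n)\to 0$. For two points in a common summand one must bound $d_{X_n}$ against the intrinsic distance in that summand, and for points in different summands compare the path through the gluing locus with $d_X(x,y)=d_{S_1}(x,p)+d_{S_2}(p,y)$; the delicate point is that a geodesic entering an excised disk has to be rerouted around its boundary, so the error is governed by the \emph{length} of the bounding curve, not merely by its diameter. Since $S_1$ and $S_2$ are only length surfaces, small metric balls need not have short, or even rectifiable, boundaries. I would circumvent this by first replacing $S_1$ and $S_2$ by polyhedral surfaces of the same topological type approaching them in the Gromov-Hausdorff sense (these exist by the approximation results already cited), performing the surgery along short edges of a fine triangulation, and then extracting a diagonal sequence; all error terms are then controlled by the mesh size, and Proposition \ref{prop_almost_isom} assembles the required $\varepsilon_n$-isometries.
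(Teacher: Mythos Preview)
Your proposal is correct and follows the same two constructions as the paper: an interior connected sum for statement 1) and a boundary connected sum along short arcs through $p$ for statement 2). The paper in fact cites \cite{Dot24} for 1) without further detail, and for 2) carries out exactly your gluing of short boundary arcs.

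The one point worth noting is that your polyhedral detour, while valid, is unnecessary for statement 2). The paper simply takes $\gamma_{i,n}\subset a_i$ to be a \emph{geodesic} arc of length $\tfrac{1}{n}$ with midpoint $p$ (the boundary circles, as compact length spaces, admit such arcs) and defines $X_n$ as the metric quotient of the wedge sum $X$ identifying $\gamma_{1,n}$ with $\gamma_{2,n}$ isometrically. Since any two identified points lie at distance at most $\tfrac{2}{n}$ in $X$, the projection $g_n\colon X\to X_n$ has distortion $\le\tfrac{2}{n}$, and a section $f_n$ with $f_n\circ g_n=\mathrm{id}$ off the glued arcs gives the required $\varepsilon_n$-isometry with $f_n(\partial X_n)=\partial X$ directly. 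No rerouting around excised regions occurs, so your concern about boundary length does not arise here. Your polyhedral (or Riemannian) approximation is, however, a clean way to handle the genuine issue in statement 1), where interior disks must be removed; the paper does not spell this out and relies on the earlier reference.
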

\begin{proof}
1) We may assume the wedge points not to lie in $\partial S_1\cup\partial S_2$. In \cite[p. 14]{Dot24} the author already showed the statement for closed surfaces. The corresponding proof does not depend on the fact that the wedged surfaces are closed and it gives rise to a proof for the general case.\\ 
2) Let $a_1$ and $a_2$ be the intersecting boundary components of the surfaces and $p$ be the wedge point. We choose an arc $\gamma_{i,n}\subset a_i$ containing $p$ in its interior. In particular we may assume that the arc is a geodesic of length $\nicefrac{1}{n}$ such that $p$ is its midpoint. Next we define $X_n$ as the metric gluing of $X$ along $\gamma_{1,n}$ and $\gamma_{2,n}$. Moreover we denote the corresponding projection map by $g_n$ and find a map $f_n\colon X_n\to X$ such that $f_n\circ g_n$ is the identity map on $X_n\setminus\p{\gamma_{1,n}\cup\gamma_{2,n}}$ and $\p{f_n\circ g_n}\p{\gamma_{1,n}}=\gamma_{1,n}\cup \gamma_{2,n}$. Finally we deduce that the space $X_n$ and the map $f_n$ satisfy the desired properties.
\end{proof}
\noindent
Using similar arguments as above, we derive the following result concerning metric 2-point identifications: \begin{lem}\label{lem_2point}
Let $S$ be a space in $\mathcal{S}(c)$ and $X$ be a metric 2-point identification of $S$. Further let $p$ be a corresponding projection map. Then the following statements apply: 
\begin{itemize}
\item[1)] There is sequence $\seq{X}{n}$ in $\mathcal{S}\p{c+2}$ and an $\varepsilon_n$-isometry $f_n\colon X_n \to X$ with
$f_n\p{\partial X_n}=p\p{\partial S}$ and $\conv{\varepsilon_n}{0}$. 
\item[2)] If $p$ is a boundary identification, then there is a sequence $\seq{X}{n}$ in $\mathcal{S}\p{c+1}$ and an $\varepsilon_n$-isometry $f_n\colon X_n \to X$ with $f_n\p{\partial X_n}=p\p{\partial S}$ and $\conv{\varepsilon_n}{0}$.  
\end{itemize}
The surfaces of the sequence may be chosen to be non-orientable. If $S$ is orientable, then the surfaces of the sequence may be chosen to be orientable. 
\end{lem}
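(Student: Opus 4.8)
The plan is to adapt the construction used for Lemma~\ref{lem_wedge}, replacing the wedge gluing by a self-gluing of $S$ that resolves the identified point into a genuine surface point. Throughout, let $a,b\in S$ be the two points identified by $p$ and write $q\coloneqq p(a)=p(b)$.

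For the second statement we have $a,b\in\partial S$. I would fix geodesic arcs $\gamma_1\ni a$ and $\gamma_2\ni b$ of length $\nicefrac{1}{n}$ lying in $\partial S$, each with midpoint $a$ respectively $b$, and define $X_n$ as the metric gluing of $S$ along the arc-length identification $\gamma_1\to\gamma_2$. Gluing two disjoint boundary arcs of a compact surface again yields a compact length surface, and since two arcs are collapsed to one we get $\chi(X_n)=\chi(S)-1$, so the connectivity number of $X_n$ equals $c+1$. For the first statement, where $a,b$ may be interior, I would instead choose disjoint topological disk neighborhoods $D_1\ni a$ and $D_2\ni b$ of diameter at most $\nicefrac{1}{n}$, delete their interiors and glue the two resulting boundary circles by a homeomorphism; then $\chi(X_n)=\chi(S)-2$ and the connectivity number equals $c+2$. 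In either case the orientability of $X_n$ is dictated by the gluing map: choosing it to reverse a boundary orientation induced by an orientation of $S$ keeps $X_n$ orientable, while the opposite choice produces a cross-handle respectively a M\"obius neighborhood and makes $X_n$ non-orientable. This gives the orientability assertions, the non-orientable choice being available for every $S$.

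Because the glued set $\bar\gamma_n$ has diameter $O(\nicefrac{1}{n})$, collapsing it recovers precisely the identification $a\sim b$, so the spaces $X_n$ converge to $X$. To see this via Proposition~\ref{prop_almost_isom} I would build maps $f_n\colon X_n\to X$ that coincide with $p$ outside the $\nicefrac{1}{n}$-neighborhood of $\bar\gamma_n$ and send that neighborhood into the $\nicefrac{1}{n}$-ball around $q$. As the self-gluing only introduces shortcuts of length $O(\nicefrac{1}{n})$, one obtains $dis(f_n)=O(\nicefrac{1}{n})$, so $f_n$ is an $\varepsilon_n$-isometry with $\conv{\varepsilon_n}{0}$ and $\conv{X_n}{X}$ follows.

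The main obstacle is the boundary condition $f_n(\partial X_n)=p(\partial S)$ in the second statement. Here the glued arc $\bar\gamma_n$ is an interior arc of $X_n$, whereas $p(\gamma_1\cup\gamma_2)$ lies in $\partial X$; hence $f_n$ cannot reach $p(\gamma_1^\circ\cup\gamma_2^\circ)$ by mapping boundary to boundary in the obvious way. As in Lemma~\ref{lem_wedge} I would instead let $f_n$ fold the boundary of $X_n$ at the two endpoints of $\bar\gamma_n$: when a boundary point passes such an endpoint it is made to sweep once from there along one arc to $q$ and back along the other, so that the two sweeps together cover all of $p(\gamma_1\cup\gamma_2)$. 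Since this folding takes place inside the $\nicefrac{1}{n}$-ball around $q$, it leaves the distortion estimate intact while ensuring $f_n(\partial X_n)=p(\partial S)$. For the first statement the gluing does not touch $\partial S$, so $\partial X_n=\partial S$ and the boundary condition is immediate.
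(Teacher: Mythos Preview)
Your proposal is correct and follows exactly the route the paper indicates: the paper gives no separate argument for this lemma, saying only ``using similar arguments as above'' in reference to Lemma~\ref{lem_wedge}, and your two constructions---boundary-arc gluing for part~2) and disk-excision with circle-gluing for part~1)---are precisely the self-gluing analogues of the two cases treated there. One small point you leave implicit: in part~1) your disk construction needs $a$ and $b$ to lie in the interior of $S$; when one of them is a boundary point you should first approximate $X$ by the identification at nearby interior points and then diagonalize, mirroring the reduction the paper makes at the start of the proof of Lemma~\ref{lem_wedge}~1). Your discussion of the boundary condition $f_n(\partial X_n)=p(\partial S)$ in part~2) in fact goes beyond what the paper spells out.
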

\subsection{Gluings of Generalized Cactoids}
In this subsection we approximate spaces that can be obtained by a successive application of $k$ metric 2-point identifications to some generalized cactoid.\\
Using an induction, Corollary \ref{cor_appr_G} and Lemma \ref{lem_wedge} yield the case $k=0$:
\begin{cor}\label{cor_appr_G_II}
Let $X\in\mathcal{G}\left(c,b\right)$. Then $X$ can be obtained as the limit of spaces $\seq{X}{n}$ in $\mathcal{S}\left(c,b\right)$. Moreover the following statements apply: 
\begin{itemize}
\item[1)] The sequence may be chosen such that $\conv{\partial X_n}{\partial X}$.
\item[2)] If the maximal cyclic subsets of $X$ are orientable, then the surfaces of the sequence may be chosen to be orientable.
\item[3)] If there is a non-orientable maximal cyclic subset in $X$, then the surfaces of the sequence may be chosen to be non-orientable. 
\end{itemize}\end{cor}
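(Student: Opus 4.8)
The plan is to bootstrap the two approximation results already established: Corollary \ref{cor_appr_G} reduces the problem from an arbitrary cactoid to a \emph{finite} wedge sum of surfaces, and Lemma \ref{lem_wedge} fuses two wedged surfaces into a single approximating surface of the correct class. The whole argument is then an induction on the number of wedge summands followed by a diagonal argument.

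First I would apply Corollary \ref{cor_appr_G} to obtain spaces $X^{(n)}\in\mathcal{W}_0$ with $\conv{X^{(n)}}{X}$, whose maximal cyclic subsets are $T_1,\dots,T_n$ together with finitely many spheres and discs, which have $b$ boundary components, and which satisfy $\conv{\partial X^{(n)}}{\partial X}$. Since spheres and discs have reduced connectivity number $0$ and only finitely many of the $T_k$ are not spheres or discs, for all large $n$ the reduced connectivity numbers of the maximal cyclic subsets of $X^{(n)}$ still sum to $c$; thus $X^{(n)}\in\mathcal{W}_0\cap\mathcal{G}(c,b)$. By parts 2) and 3) of Corollary \ref{cor_appr_G} I may also match orientability: every summand orientable if all $T_k$ are, and at least one summand non-orientable if some $T_k$ is.

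The heart of the proof is an auxiliary claim, proved by induction on the number $m$ of maximal cyclic subsets: \emph{every $Y\in\mathcal{W}_0\cap\mathcal{G}(c,b)$ is a limit of surfaces $\seq{Y}{j}$ in $\mathcal{S}(c,b)$ admitting $\varepsilon_j$-isometries $g_j\colon Y_j\to Y$ with $g_j(\partial Y_j)=\partial Y$, where all $Y_j$ may be chosen orientable if all summands of $Y$ are, and non-orientable if $Y$ has a non-orientable summand.} For $m=1$ the space $Y$ is itself a surface in $\mathcal{S}(c,b)$ and the constant sequence works. For the inductive step I would write $Y=Y'\vee S$, peeling off a leaf $S\in\mathcal{S}(c'',b'')$ of the wedge tree at a wedge point $p$; the remaining wedge sum $Y'$ is again a generalized cactoid in $\mathcal{W}_0$, and a short count of boundary components shows $Y'\in\mathcal{G}(c',b')$, where $(c,b)$ arises from $(c',b')$ and $(c'',b'')$ by exactly the formulas of Lemma \ref{lem_wedge}: the case $c=c'+c''$, $b=b'+b''$ when $p$ does not lie on both boundaries, and the case $c=c'+c''$, $b=b'+b''-1$ when $p\in\partial Y'\cap\partial S$. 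Applying the induction hypothesis to $Y'$ gives approximating surfaces $g_j\colon Y'_j\to Y'$; I would transport $p$ to a point $p_j\in Y'_j$ with $g_j(p_j)$ near $p$ (in $\partial Y'_j$ when $p\in\partial Y'$, which is possible because $g_j(\partial Y'_j)=\partial Y'$), extend $g_j$ by the identity on $S$ to obtain $Y'_j\vee S\to Y$, and apply the matching part of Lemma \ref{lem_wedge} to each two-surface wedge $Y'_j\vee S$. This produces for every $j$ a sequence of surfaces in $\mathcal{S}(c,b)$ converging to $Y'_j\vee S$ with boundary-onto almost-isometries and the required orientability, and a diagonal argument then yields surfaces in $\mathcal{S}(c,b)$ converging to $Y$.

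Finally I would feed each $X^{(n)}\in\mathcal{W}_0\cap\mathcal{G}(c,b)$ from the first step into the claim and diagonalize once more, using Proposition \ref{prop_almost_isom} to turn the composed $\varepsilon$-isometries into the desired convergence; the relation $\conv{\partial X_n}{\partial X}$ follows by composing the boundary-onto almost-isometries with $\conv{\partial X^{(n)}}{\partial X}$, and orientability is inherited from the claim. The step I expect to be the main obstacle is the boundary bookkeeping across the wedge operations: verifying that peeling off a leaf genuinely leaves $Y'$ in the class predicted by the two cases of Lemma \ref{lem_wedge}, and matching the wedge points $p_j$ to $p$ consistently (on the boundary precisely when required) so that membership in $\mathcal{S}(c,b)$ and the boundary convergence both survive the two nested diagonal arguments. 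Once the classes are checked to propagate correctly, Lemma \ref{lem_wedge} delivers exactly the surfaces needed and the rest is routine.
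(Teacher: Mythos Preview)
Your proposal is correct and follows essentially the same route as the paper, which proves this corollary in one line by saying that an induction together with Corollary~\ref{cor_appr_G} and Lemma~\ref{lem_wedge} yields the statement; you have simply spelled out that induction and the accompanying diagonal arguments. One small slip: Corollary~\ref{cor_appr_G} has no parts 2) and 3) about orientability---the orientability matching you need is a direct consequence of part 1) (the summands of $X^{(n)}$ are isometric copies of $T_1,\dots,T_n$ together with spheres and discs), so just cite part 1) there.
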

\noindent
Now we show the general case:
\begin{lem}
Let $k,k_0\inN_0$ and $X\in\mathcal{G}(c)$. Further let $Y$ be a space that can be obtained by a successive application of $k$ metric 2-point identifications to $X$ such that $k_0$ of them are boundary identifications. Then $Y$ can be obtained as the limit of spaces $\seq{Y}{n}$ in $\mathcal{S}\p{c-k_0+2k}$. Moreover the following statements apply:
\begin{itemize}
\item[1)] If the maximal cyclic subsets of $X$ are orientable, then the surfaces of the sequence may be chosen to be orientable. 
\item[2)] If there is a non-orientable maximal cyclic subset in $X$ or $k>0$, then the surfaces of the sequence may be chosen to be non-orientable.
\end{itemize}
\end{lem}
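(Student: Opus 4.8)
The plan is to argue by induction on $k$, using Corollary \ref{cor_appr_G_II} for the base case and Lemma \ref{lem_2point} for the inductive step. To make the induction close I would strengthen the statement: together with the approximating sequence $\seq{Y}{n}$ I would also produce $\varepsilon_n$-isometries $f_n\colon Y_n\to Y$ with $\conv{\varepsilon_n}{0}$ such that $f_n\p{\partial Y_n}$ equals the image of $\partial X$ under the composition of the projection maps used so far. This boundary-tracking invariant is exactly the extra data needed to recognise a boundary identification on the approximating surfaces and to feed it into the next step.

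For $k=0$ the assertion is Corollary \ref{cor_appr_G_II}: any $X\in\mathcal{G}(c)$ is the limit of surfaces in $\mathcal{S}(c)$ with $\conv{\partial X_n}{\partial X}$, and the orientability clauses there already cover both statements of the lemma in this case. For the inductive step I would write $Y=p_k(Y')$, where $Y'$ is obtained from $X$ by the first $k-1$ identifications and $p_k$ is a metric 2-point identification. By the induction hypothesis there are surfaces $Y'_n\in\mathcal{S}\p{c-k_0'+2(k-1)}$ converging to $Y'$, where $k_0'$ counts the boundary identifications among the first $k-1$ maps, together with boundary-tracking almost-isometries $f'_n$. I would then select points $a_n,b_n\in Y'_n$ whose images under $f'_n$ approximate the two points identified by $p_k$ and form the quotients $Y'_n/(a_n\sim b_n)$. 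A stability argument for metric 2-point identifications under Gromov-Hausdorff convergence with converging identification data (routine given the standing convention of working inside a common ambient space) shows that these quotients converge to $Y$. If $p_k$ is a boundary identification, the boundary-tracking property forces $a_n,b_n\in\partial Y'_n$, so $Y'_n/(a_n\sim b_n)$ is a boundary identification of an honest surface and Lemma \ref{lem_2point} produces surface approximations in $\mathcal{S}\p{(c-k_0'+2(k-1))+1}$; otherwise Lemma \ref{lem_2point} produces approximations in $\mathcal{S}\p{(c-k_0'+2(k-1))+2}$. A diagonal argument combining these two nested approximations, justified by Proposition \ref{prop_almost_isom} and composition of distortions, yields the required sequence $\seq{Y}{n}$ converging to $Y$, and the boundary clause of Lemma \ref{lem_2point} propagates the boundary-tracking invariant.

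The connectivity bookkeeping then closes the induction: in the boundary case $k_0=k_0'+1$ and the target index is $c-k_0+2k=(c-k_0'+2(k-1))+1$, while in the non-boundary case $k_0=k_0'$ and $c-k_0+2k=(c-k_0'+2(k-1))+2$; both agree with the index produced above. For orientability, if the maximal cyclic subsets of $X$ are orientable then Corollary \ref{cor_appr_G_II} gives orientable base surfaces and the orientable option of Lemma \ref{lem_2point} is used at every step, proving the first statement; if $X$ has a non-orientable maximal cyclic subset the base surfaces may be taken non-orientable, while if $k>0$ the final application of Lemma \ref{lem_2point} may always be chosen non-orientable, proving the second statement.

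The main obstacle I anticipate is the careful management of the doubly-indexed diagonal sequence together with the boundary-tracking invariant. One must verify that the approximately identified points $a_n,b_n$ can genuinely be placed on $\partial Y'_n$ when $p_k$ is a boundary identification, that the resulting quotients remain boundary identifications of honest surfaces (so that Lemma \ref{lem_2point} is applicable), and that the boundary image survives the diagonalisation. Establishing the stability of metric 2-point identifications under Gromov-Hausdorff convergence with converging identification data is the technical heart that makes the passage $Y'_n/(a_n\sim b_n)\to Y$ rigorous.
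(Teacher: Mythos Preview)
Your proposal is correct and follows essentially the same approach as the paper: both strengthen the claim by tracking the image of $\partial X$ through the projection maps, induct on $k$ with Corollary \ref{cor_appr_G_II} as the base case, approximate the identification points on the surface approximants (on the boundary when $p_k$ is a boundary identification), apply Lemma \ref{lem_2point} to the resulting 2-point identifications of surfaces, and diagonalize. The only cosmetic difference is that the paper phrases the boundary-tracking invariant as Hausdorff convergence $\partial Y_n\to\p{p_k\circ\ldots\circ p_0}\p{\partial X}$ rather than as an exact image condition under $\varepsilon_n$-isometries; your formulation is slightly stronger than what Corollary \ref{cor_appr_G_II} literally states, but it is recoverable from the lemmas preceding that corollary, and in any case the weaker convergence version already suffices for the induction.
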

\begin{proof}
First we add a statement to the claim: There is a choice $p_1,\ldots,p_k$ of the corresponding projection maps such that $\conv{\partial Y_n}{\p{p_k\circ\ldots\circ p_0}\p{\partial X}$ where $p_0\coloneqq id_X$}.\\
The proof of the claim proceeds by induction over $k$:\\
In the case $k=0$ the claim directly follows by Corollary \ref{cor_appr_G_II}.\\
Now we consider the case $k>0$. Moreover we assume that the claim is true if the number of identifications is less than $k$. Let $p_1,\ldots, p_k$ be a choice of the corresponding projection maps. We set $Z\coloneqq \p{p_{k-1}\circ\ldots\circ p_0}(X)$ and denote the number of boundary identifications in $\cp{p_{k-1},\ldots,p_1}$ by $\Tilde{k}_0$. Then $Z$ is a space that can be obtained by a successive application of $k-1$ metric 2-point identifications to $X$ such that $\Tilde{k}_0$ of them are boundary identifications. Hence we can apply the induction hypothesis and derive a corresponding sequence $\seq{Z}{n}$ in $\mathcal{S}\p{c-\Tilde{k}_0+2\p{k-1}}$. In particular we may assume that $\conv{\partial Z_n}{\p{p_{k-1}\circ\ldots\circ p_0}\p{\partial X}}$.\\
Let $z_1,z_2\in Z$ be distinct  points with $p_k\p{z_1}=p_k\p{z_2}$. Then there is a sequence $\p{z_{i,n}}_{n\inN}$ with $z_{i,n}\in Z_n$ and $z_{1,n}\neq z_{2,n}$ such that $\conv{z_{i,n}}{z_i}$. Provided $p_k$ is a boundary identification, we may assume $z_{i,n}\in\partial Z_n$. Further we define $W_n$ as the metric gluing of $Z_n$ along $z_{1,n}$ and $z_{2,n}$. We denote the corresponding projection map by $p_{k,n}$.\\
By construction $p_{k,n}$ is a boundary identification if $p_k$ is. Moreover it follows $\conv{W_n}{Y}$ and we may assume that $\p{p_{k,n}}_{n\inN}$ converges to some map $q_k$. We note that $q_k,p_{k-1},\ldots,p_1$ is also a possible choice of the projection maps corresponding to the construction of $Y$. Hence we may assume that $\conv{p_{k,n}\p{\partial Z_n}}{\p{p_k\circ\ldots\circ p_0}\p{\partial X}}$.\\
Finally we apply Lemma \ref{lem_2point} to $W_n$ and denote the corresponding sequence of surfaces by $\p{Y_{n,m}}_{m\inN}$ and the corresponding sequence of almost isometries by  $\p{f_{n,m}}_{m\inN}$. Then we have $f_{n,m}\p{\partial Y_{n,m}}=p_{k,n}\p{\partial Z_n}$. Choosing a diagonal sequence, we may assume that $\conv{Y_n\coloneqq Y_{n,n}}{Y}$ and that $\p{f_{n,n}}_{n\inN}$ converges to an isometry $f$. We note that $\p{f^{-1}\circ p_k}, p_{k-1},\ldots, p_1$ is also a possible choice of the projection maps corresponding to the construction of $Y$. Hence we finally may assume that $\conv{\partial Y_n}{\p{p_k\circ\ldots\circ p_0}\p{\partial X}}$.
\end{proof}
\noindent
Using a metric wedge sum with a vanishing sequence of length spaces all being homeomorphic to the 2-disc or all being homeomorphic to the real projective plane, we derive a further corollary of Lemma \ref{lem_wedge}:
\begin{cor}
Let $X\in\mathcal{S}(c)$. Then $X$ can be obtained as the limit of spaces in $\mathcal{S}\p{c+1}$. Moreover the following statements apply:
\begin{itemize}
\item[1)] If $S$ is orientable, then the surfaces of the sequence may be chosen to be orientable.
\item[2)] The surfaces of the sequence may be chosen to be non-orientable.
\end{itemize}
\end{cor}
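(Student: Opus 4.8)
The plan is to enlarge the connectivity number by exactly one at the cost of a vanishing metric perturbation, and then let Lemma \ref{lem_wedge} convert the resulting non-manifold wedge sum back into honest surfaces. Write $b$ for the number of boundary components of $X$, so that $X\in\mathcal{S}\p{c-b,b}$. First I would treat the orientable conclusion: choose length spaces $\seq{D}{n}$ homeomorphic to the $2$-disc with $\conv{diam\p{D_n}}{0}$, so that $D_n\in\mathcal{S}\p{0,1}$, and for each $n$ form a metric wedge sum $Y_n$ of $X$ and $D_n$ at interior points of both. Applying the first part of Lemma \ref{lem_wedge} to $Y_n$ produces a sequence in $\mathcal{S}\p{c-b,b+1}$ converging to $Y_n$; every such surface has connectivity number $\p{c-b}+\p{b+1}=c+1$, and since $X$ and $D_n$ are orientable the orientability clause of the lemma lets these surfaces be chosen orientable. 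For the non-orientable conclusion I would instead attach length spaces $\seq{P}{n}$ homeomorphic to the real projective plane with vanishing diameter; as $P_n\in\mathcal{S}\p{1,0}$ is closed, the wedge point is automatically interior, so the first part of Lemma \ref{lem_wedge} now yields surfaces in $\mathcal{S}\p{c-b+1,b}$, again of connectivity number $c+1$, which may be taken non-orientable because $P_n$ is.

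The second step is to return from the wedge sums to $X$ itself. Collapsing the attached vanishing piece to the wedge point defines a $\delta_n$-isometry $Y_n\to X$ with $\conv{\delta_n}{0}$, whence $\conv{Y_n}{X}$. Composing this map with the $\varepsilon$-isometries furnished by Lemma \ref{lem_wedge} and using that distortion is subadditive under composition, I would extract a diagonal sequence: for each $n$ select a surface in the sequence approximating $Y_n$ whose almost-isometry onto $Y_n$ has distortion below $\nicefrac{1}{n}$, so that the composite almost-isometry onto $X$ has distortion and net parameter tending to zero. By Proposition \ref{prop_almost_isom} this diagonal sequence, which lies in $\mathcal{S}\p{c+1}$ and retains the chosen orientability, converges to $X$.

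I do not anticipate a genuine obstacle, since Lemma \ref{lem_wedge} already performs the surface approximation of a wedge sum and the whole content of the corollary is the judicious choice of the attached piece (a disc to keep orientability, a projective plane to force non-orientability) together with the bookkeeping of the reduced connectivity numbers. The only points demanding care are the two connectivity-number computations above and the routine verification, via Proposition \ref{prop_almost_isom}, that a composition of two vanishing almost-isometries is again a vanishing almost-isometry, which underlies the diagonal argument.
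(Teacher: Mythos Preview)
The proposal is correct and takes essentially the same approach as the paper: wedge $X$ with a vanishing sequence of $2$-discs (for the orientable conclusion) or real projective planes (for the non-orientable one), apply Lemma \ref{lem_wedge}, and pass to a diagonal sequence. Your write-up simply spells out in detail what the paper compresses into the single sentence preceding the corollary.
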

\noindent
The final result of this chapter is a direct consequence of the last two results and it refines the remaining statement of the \hyperlink{Main Theorem}{Main Theorem}:
\begin{trm}\label{trm_main_sec}
Let $c,k,k_0\inN_0$ and $X\in\mathcal{G}\p{c_0}$ where $c_0\le c+k_0-2k$. Further let $Y$ be a space that can be obtained by a successive application of $k$ metric 2-point identifications to $X$ such that $k_0$ of them are boundary identifications. Then $Y$ can be obtained as the limit of spaces $\seq{Y}{n}$ in $\mathcal{S}(c)$. Moreover the following statements apply: \begin{itemize}
\item[1)] If the maximal cyclic subsets of $X$ are orientable, then the surfaces of the sequence may be chosen to be orientable.
\item[2)] If there is a non-orientable maximal cyclic subset in $X$ or $c_0<c$, then the surfaces of the sequence may be chosen to be non-orientable.
\end{itemize}
\end{trm}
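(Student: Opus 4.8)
The plan is to read the assertion as a two-stage approximation and to chain the two preceding results. First I would apply the preceding lemma to $X\in\mathcal{G}\p{c_0}$; it exhibits $Y$ as the Gromov-Hausdorff limit of a sequence in $\mathcal{S}\p{c_0-k_0+2k}$. Writing $c'\coloneqq c_0-k_0+2k$, the hypothesis $c_0\le c+k_0-2k$ is exactly the inequality $c'\le c$, so $c-c'$ is a nonnegative integer. The remaining task is to raise the connectivity number from $c'$ to $c$, which is precisely what the preceding corollary does one unit at a time.

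To carry this out cleanly I would work with Gromov-Hausdorff closures of the relevant classes, writing $\mathcal{S}^{or}$ and $\mathcal{S}^{nor}$ for the orientable and non-orientable members of a class. Since the Gromov-Hausdorff distance is a metric on isometry classes of compact metric spaces, taking closures is monotone and idempotent; hence the corollary's inclusion $\mathcal{S}(j)\subseteq\overline{\mathcal{S}(j+1)}$ yields $\overline{\mathcal{S}(j)}\subseteq\overline{\mathcal{S}(j+1)}$ for every $j$, and composing these from $c'$ up to $c$ gives $\overline{\mathcal{S}(c')}\subseteq\overline{\mathcal{S}(c)}$. As $Y\in\overline{\mathcal{S}(c')}$ by the first stage, this already proves the main assertion; alternatively one may avoid the closure formalism and assemble the sequence directly by a diagonal argument over the finitely many intermediate levels. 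For statement 1, when the maximal cyclic subsets of $X$ are orientable the lemma provides orientable approximants at level $c'$, and the disc-wedge version of the corollary preserves orientability, so the same chain run inside the orientable subclasses gives $Y\in\overline{\mathcal{S}^{or}(c)}$.

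For statement 2 I would distinguish whether the gap $c-c'$ is strict. If $c'<c$ there is at least one bump-up step; here I would invoke the projective-plane-wedge version of the corollary, which forces non-orientability unconditionally, so $\overline{\mathcal{S}(c')}\subseteq\overline{\mathcal{S}^{nor}(c'+1)}\subseteq\overline{\mathcal{S}^{nor}(c)}$ and hence $Y$ is a limit of non-orientable surfaces, regardless of which disjunct of the hypothesis holds. If instead $c'=c$ there is no room to insert a wedge, and non-orientability must already be supplied by the lemma: if $X$ has a non-orientable maximal cyclic subset the lemma's part 2 applies directly, while if $c_0<c$ the equality $c=c'=c_0-k_0+2k$ together with $c_0<c$ forces $2k-k_0>0$, hence $k>0$, so the lemma's ``$k>0$'' clause again delivers non-orientable approximants at level $c'=c$. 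The main point requiring care is exactly this orientability bookkeeping in statement 2: one must check that under each disjunct of the hypothesis the construction can be steered into the non-orientable subclass, either through a strict connectivity gap that admits a projective-plane wedge or through the lemma's own non-orientability guarantees when the gap closes.
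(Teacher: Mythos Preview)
Your proposal is correct and follows exactly the approach the paper takes: the paper states that the theorem ``is a direct consequence of the last two results'' (the lemma giving approximants in $\mathcal{S}(c_0-k_0+2k)$ and the corollary raising the connectivity number by one), and your argument is a faithful and careful unpacking of this, including the orientability bookkeeping in statement~2. The case split at $c'=c$ versus $c'<c$ and the observation that $c'=c$ together with $c_0<c$ forces $k>0$ are exactly the points one must check, and you handle them correctly.
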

\noindent
Finally we prove the third statement of Theorem \ref{trm_topo}:
\begin{proof}[Proof of Theorem \ref{trm_topo} (Part II)] If $Y$ is a space that can be obtained by a successive application of metric 2-point identifications to some geodesic generalized cactoid, then the second statement of Theorem \ref{trm_topo} and Theorem \ref{trm_main_sec} imply that $Y$ is locally simply connected. Due to Theorem \ref{trm_main_first} the space $X$ can be obtained in this way. Hence Proposition \ref{prop_funda_form} and Proposition \ref{prop_funda_form_II} finally yield the desired fundamental group formula for $X$.  
\end{proof}
\section*{Acknowledgments}
The author thanks his PhD advisor Alexander Lytchak for great support.
\bibliography{sources}{}
\bibliographystyle{abbrv}
\footnotesize{\textsc{Institute of Algebra and Geometry, Karlsruhe Institute of Technology, Germany.}\\
\emph{E-mail}: \url{tobias.dott@kit.edu}}
\end{document}